\documentclass{article}

\usepackage{amsthm}
\usepackage{xypic}
\usepackage{amssymb,amsfonts,amsmath}
\usepackage{amscd}
\usepackage[english]{babel}
\usepackage{import}
\usepackage{mathtools}
\usepackage[colorlinks=true]{hyperref}
\usepackage{subfiles}
\usepackage{comment}


\newtheorem{theorem}{Theorem}[subsection]
\newtheorem{corollary}[theorem]{Corollary}
\newtheorem{proposition}[theorem]{Proposition}

\theoremstyle{definition}
\newtheorem{definition}[theorem]{Definition}

\theoremstyle{plain}
\newtheorem{lemma}[theorem]{Lemma}
\theoremstyle{remark}
\newtheorem{remark}[theorem]{Remark}

\newtheorem{warning}[theorem]{Warning}

\def \quo{\backslash}

\newcommand{\m}{\mathcal}

\def\Epsilon{\mathcal{E}}
\def\AA{\mathbb{A}}

\def\ZZ{\mathbb{Z}}

\def\Z2{\ZZ/2}

\def\({\left(}
\def\){\right)}

\DeclareMathOperator\Fun{Fun}
\DeclareMathOperator\RFun{RFun}

\DeclareMathOperator\Hom{Hom}
\DeclareMathOperator\Ext{Ext}

\DeclareMathOperator\pro{Pro}
\DeclareMathOperator\ind{Ind}
\DeclareMathOperator\HH{H}

\newcommand\Cref[1]{{Corollary~\ref{#1}}}

\newcommand\adj{\rightleftarrows}
\newcommand\Etale{\text{\'Etale }}
\newcommand\etale{\text{\'etale }}
\def\spec{\mathop{\rm Spec}}

\newcommand{\Sh}[1]{Shv_\infty(#1)}
\newcommand{\Prosh}[1]{\pro Shv_\infty(#1)}
\newcommand{\ProSh}[1]{\pro Shv_\infty(#1)}

\newcommand\Et{\text{\'Et}}
\newcommand\et{\text{\'et}}

\newcommand{\prl}{\mathfrak{Pr}^L}

\newcommand{\mt}{\m{T}}
\newcommand{\mtl}{\Sh{\m{T},\lambda}}


\makeatother
\newcommand\IIarr{\ar@<-.4ex>[r] \ar@<.4ex>[r]}
\newcommand\IIard{\ar@<-.4ex>[d] \ar@<.4ex>[d]}
\newcommand{\IIIarr}{\ar@<-.8ex>[r] \ar[r] \ar@<.8ex>[r]}
\newcommand{\IIIard}{\ar@<-.8ex>[d] \ar[d] \ar@<.8ex>[d]}
\newcommand{\IVarr}{\ar@<-1.2ex>[r] \ar@<-.4ex>[r] \ar@<.4ex>[r] \ar@<1.2ex>[r]}
\newcommand{\IVard}{\ar@<-1.2ex>[d] \ar@<-.4ex>[d] \ar@<.4ex>[d] \ar@<1.2ex>[d]}
\newcommand{\invlim}{\varprojlim}
\newcommand{\colim}{\varinjlim}
\newcommand{\laxlim}{\underleftarrow{\operatorname{Lax}}}
\newcommand{\oplaxlim}{\underleftarrow{\operatorname{OpLax}}}
\newcommand{\into}{\hookrightarrow}

\title{\Etale Homotopy Obstructions of Arithmetic Spheres}
\author{Edo Arad, Shachar Carmeli, Tomer M. Schlank}
\date{March 2017}
\begin{document}

\maketitle

\begin{abstract}
Let $K$ be a field of characteristic $\ne 2$ and let $X$ be the affine variety 
over $K$ defined by the equation
\[
X:\ a_0x_0^2 + \cdots + a_nx_n^2 = 1
\]
where $n\ge 0$ and $a_i\in K$.
In this paper we compute the lowest mod 2 \'etale homological obstruction class to the 
existence of a $K$-rational point on $X$, and show that it is the cup product of the form 
\[ o_{n+1} = [a_0]\cup\cdots\cup[a_n]. \]

Our computation is an \Etale-homotopy analogue 
of the topological fact that Stiefel-Whitney classes are the homological obstructions to find a section to the unit sphere bundle of a real vector bundle.   
\end{abstract}

\tableofcontents
\section{Introduction}
\label{intro}
The arithmetic of quadratic forms is a well-established subject. 
In 1987 Hilbert introduced the quadratic symbol over a local field $K$, 
defined by
\[ (a,b) = 
\begin{cases}
1 & \text{ if } ax^2 + by^2 = 1 \text{ is solvable over } K \\  
-1 & \text{ else }
\end{cases}
\]
and its generalization to number fields using the various completions at 
the places of the field.

Later on, those symbols recognized as a special case of a general Hasse-Witt invariant 
$HW_2(B) \in Br(K)$, attached to a quadratic form $B$ over a field $K$ of characteristic 
$\ne 2$. Those classes are arithmetic analogues of the Stiefel-Whitney classes 
in algebraic topology, and form part of a sequence $HW_k(B) \in \HH^k_\et(K,\mu_2)$ of classes, satisfying the 
Whitney product formula analogous to the product formula for Stiefel-Whitney classes from classical topology. 

Applying the product formula to the top Hasse-Witt invariant, one immediately see that 
$HW_{rank(B)}(B)$ is an obstruction to a solution of the equation 
$B(v,v) = 1$ over $K$, just like the original Hilbert symbol is defined as an obstruction to 
solution of such equation for $rank(B) = 2$. 

In \cite{HarpazSchlank} (and expended in \cite{BARNEA2016784}), 
Y.Harpaz and the third author introduced a general obstruction theory for rational points 
on algebraic varieties based on the notion of relative \etale homotopy type. 

In this paper, we shall link the classical theory of Hasse-Witt classes and the quadratic 
symbol with the general \'etale obstruction theory. 
Specifically, for a quadratic form $B$, we show that the mod-2 \emph{homological} obstruction for 
a solution of the equation $B(v,v) = 1$ 
coincides with the top Hasse-Witt class of $B$. 

\subsection{Topological Motivation and Outline of the Proof}

We present below the outline of the proof and the topological origin of the argument.

\subsubsection{Obstructions for Unit Sections}
Classically, given an $n$-dimensional topological vector bundle $p\colon E\to Y$,
there is an associated cohomology class $SW_n(p)\in \HH^n(Y;\Z2)$, the Stiefel-Whitney class,
which serves as an obstruction to the existence of a global non zero section for $p$.
Vector bundles are classified by maps to the classifying space 
\[
Gr(n,\infty) \cong BGL_n \cong BO_n,
\]
by pulling back the universal vector bundle $\widetilde{p}\colon\widetilde{E}\to BO_n$.
The Stiefel-Whitney class of the vector bundle associated with $f\colon Y \to BO_n$ can be computed as the pullback
\[
SW_n(f^*(\widetilde{p})) = f^*(SW_n(\widetilde{p})).
\]

In the arithmetic setting, we cannot use the topological space $BO_n$ so we consider the analogous stack.
Specifically, given a field $K$ of characteristic different then $2$ we denote by $BO_{n,K}$ the stack  classifying \textbf{algebraic} vector bundles over $K$-schemes equipped with a non-degenerate quadratic form. 
We thus have a universal sphere bundle 
\[
\widetilde{S}\hookrightarrow\widetilde{E}\xrightarrow{\widetilde{p}} BO_{n,K}
\]
which is the variety of all vectors of norm 1 with respect to the universal quadratic form.
Analogously to the topological case, the \etale $\Z2$-cohomology ring of $BO_{n,K}$ is a polynomial ring freely generated by classes
$HW_1,\ldots,HW_n$ of degrees $\deg(HW_i)=i$ over  $H^*_{\acute{e}t}(K,\Z2)$. 

In this paper we prove that given a quadratic bundle $p\colon E\to Y$ classified by $f\colon Y\to BO_{n,K}$,
the class $SW(p)=f^*(HW_n)$ is an obstruction to the existence of a section for the sphere bundle 
$f^*(\widetilde{S})\to Y$.
Furthermore, this is precisely the obstruction class as defined in \cite{BARNEA2016784}.

\subsubsection{The Computation of the Obstruction for the Sphere}
We study the variety $X\colon \sum_{i=0}^n a_i x_i^2 =1$, which is properly 
thought of as the unit sphere in an $n+1$ dimensional linear space equipped with
the quadratic form defined by $\sum_{i=0}^n a_i x_i^2$.
Hence, it corresponds to a (rational) point on $BO_{n+1}$
so that there is a pullback diagram
\[
\xymatrix{
	X   \ar[r]\ar_p[d]    &   \widetilde{S}_{n+1}   \ar^{\widetilde{p}}[d]  \\
	\spec K \ar^f[r]      &   BO_{n+1}.
}
\]

The sphere $X$ is the unit sphere in a quadratic space which can be decomposed as a direct product
of 1 dimensional quadratic spaces of the form $a_ix^2$.
This means we have a commutative diagram
\[
\xymatrix{
	X   \ar[r]\ar_p[d]          & \widetilde{S}_1^{n+1}  \ar[r]\ar[d] &\widetilde{S}_{n+1}   \ar^{\widetilde{p}}[d]  \\
	\spec K \ar^{\times f_i}[r]& BO_1^{n+1} \ar[r] &   BO_{n+1}.
}
\]
The obstruction class $SW(p)=f^*(HW_n)$ can now be computed using Whitney's product formula
\[
SW(p) = f_0^*(HW_1)\cup\cdots\cup f_n^*(HW_1).
\]

In the 0-dimensional case, $X\colon ax^2=1$ 
and the obstruction class is \[[a]\in \HH^1(K;\Z2) \cong K^\times/(K^\times)^2 \]

In general, we have
\[
SW(p) = [a_0]\cup\cdots\cup[a_n].
\]
\subsection{Organization of the Paper}

In Section \ref{EtTopType} we introduce relative \etale homotopy type in the case of
a morphism of $\infty$-stacks. We also prove compatibilities of the relative homotopy and homology types of such morphisms, and derive basic properties of those, such as smooth base-change and behavior with repsect to colimits.
In Section \ref{RelativeObstruction} obstruction theory is introduced in the context of $\infty$-topoi. 
These two sections serve as the theoretical foundation needed for the computation, 
and fix the required notations. 

Quadratic bundles and their classifying stack are introduced and examined in Section \ref{FormsAndMaps}, and the relative \etale topological type is computed for the universal quadratic bundle over these classifying stacks.
Finally, Section \ref{ObstructionsForSpheres} contains the computation of the obstruction class for arithmetic spheres.
\subsection{Acknowledgements}

Shachar Carmeli is supported by the Adams Fellowship Program of the Israel Academy of Sciences and Humanities. Tomer Schlank is supported by the Alon Fellowship and ISF1588/18.

\section{\'Etale Homotopy Type}
\label{EtTopType}
 In this section we shall recall the definition of the 
relative homotopy type as defined in \cite{BARNEA2016784}.
We use the formalism of $\infty$-stacks and $\infty$-topoi
and the machinery developed there. 

We work with the $\infty$-category  $\mathfrak{Top}_\infty$ of $\infty$-topoi, as in \cite[Section 6]{LurieHTT}.

Let $K$ field of characteristic $\ne 2$, fixed throughout the paper
and let $Sch_{/K}$ denote the essentially small site of schemes of finite type over $K$ endowed with the \etale topology. For a site $C$ we denote by $Sh_\infty(C)$ the $\infty$-topos of sheaves of spaces over $C$. 

\begin{definition}
	An $\infty$-stack over $K$ is an object $\mathfrak{X}\in \Sh{Sch_{/K}}$. 
\end{definition}

By \cite[Lemma 2.11]{relativeTopHigherStacks}, there is a colimit preserving functor 
\[\Sh{\bullet_\et}\colon \Sh{Sch_{/K}}\to \mathfrak{Top}_\infty.\]
Hence, we can functorially assign to every $\infty$-stack $\mathfrak{X}$ an $\infty$-topos 
$\Sh{\mathfrak{X}_\et}$, which we think of as \etale sheaves over $\mathfrak{X}$.  
For an $\infty$-category $\mathcal{C}$, the category of pro-objects is defined in \cite[7.1.6.1]{LurieHTT}. It is the category of finite limits preserving functors $\mathcal{C}\to \mathcal{S}$ into the $\infty$-category of spaces, considered as a full subcategory of $Fun(\mathcal{C},\mathcal{S})$. The main property of the pro-category we use is the following 

\begin{proposition}\cite[Proposition 2.3]{relativeTopHigherStacks}
	\label{prop: left adjoint exist}
	Let $F \colon \mathcal{C}\to \mathcal{D}$ be a functor preserving finite limits. Then the induced functor $\pro(F)\colon \pro(\mathcal{C})\to \pro(\mathcal{D})$ admits a left adjoint. 
\end{proposition}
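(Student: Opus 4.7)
The plan is to give an explicit construction, working in the concrete model $\pro(\mathcal{C}) \simeq \Fun^{\mathrm{lex}}(\mathcal{C},\mathcal{S})^{\mathrm{op}}$ coming from the definition in the excerpt (via the standard dualization $\pro \simeq \ind(-^{\mathrm{op}})^{\mathrm{op}}$). In this model the constant pro-object on $c$ corresponds to the representable $\Hom_\mathcal{C}(c,-)$, and the induced functor $\pro(F)$ is opposite to the unique filtered-colimit-preserving extension
\[
\widetilde F \colon \Fun^{\mathrm{lex}}(\mathcal{C},\mathcal{S}) \to \Fun^{\mathrm{lex}}(\mathcal{D},\mathcal{S}),\qquad \Hom_\mathcal{C}(c,-) \mapsto \Hom_\mathcal{D}(F(c),-).
\]
Producing a left adjoint to $\pro(F)$ is therefore equivalent to producing a right adjoint to $\widetilde F$.

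As candidate right adjoint I would take the precomposition functor $R(H) := H \circ F$. The assumption that $F$ preserves finite limits enters precisely here: it is what guarantees $H\circ F$ is lex whenever $H$ is, so that $R$ really lands in $\Fun^{\mathrm{lex}}(\mathcal{C},\mathcal{S})$.

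To verify $\widetilde F \dashv R$, I would first check the adjunction on representables. For $G = \Hom_\mathcal{C}(c,-)$ the enriched Yoneda lemma collapses both sides to $H(F(c))$:
\[
\Hom(\widetilde F G,\, H) \simeq H(F(c)) \simeq R(H)(c) \simeq \Hom(G,\, R H).
\]
For a general $G \in \Fun^{\mathrm{lex}}(\mathcal{C},\mathcal{S})$ I would write $G \simeq \colim_i \Hom_\mathcal{C}(c_i,-)$ as a filtered colimit of representables in $\ind(\mathcal{C}^{\mathrm{op}})$; both mapping spaces then reduce to the cofiltered limit $\lim_i H(F(c_i))$, because $\widetilde F$ preserves filtered colimits by construction and because $\Hom(-,H)$ and $\Hom(-,R H)$ both convert colimits into limits.

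The only item to watch rather than a genuine obstacle is keeping the two opposites in $\pro(\mathcal{C}) = \ind(\mathcal{C}^{\mathrm{op}})^{\mathrm{op}}$ straight, so as not to accidentally swap left and right adjoints when transporting the adjunction from $\widetilde F$ back to $\pro(F)$. No accessibility or solution-set condition needs to be invoked, since the adjoint is constructed by hand rather than obtained from the adjoint functor theorem.
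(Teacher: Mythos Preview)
The paper does not supply its own proof of this proposition; it is stated with a citation to \cite[Proposition 2.3]{relativeTopHigherStacks} and nothing more. So there is no in-paper argument to compare against.

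Your argument is correct and self-contained. Identifying $\pro(\mathcal{C})$ with $\Fun^{\mathrm{lex}}(\mathcal{C},\mathcal{S})^{\mathrm{op}}$, recognising that a left adjoint to $\pro(F)$ amounts to a right adjoint to the induced functor $\widetilde F$ on lex functors, and then exhibiting that right adjoint as precomposition $H\mapsto H\circ F$ is exactly the standard construction. You also correctly isolate the role of the hypothesis: $F$ preserving finite limits is precisely what guarantees $H\circ F$ remains left exact.

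One remark worth adding if you write this up: in the applications of this paper $\mathcal{C}$ is an $\infty$-topos, hence large, and the paper's description of $\pro(\mathcal{C})$ as all lex functors is a slight elision---\cite[7.1.6.1]{LurieHTT} imposes an accessibility condition in that generality. Your construction still works verbatim there (if $H$ is accessible lex and $F$ is accessible lex, so is $H\circ F$), but it is worth saying explicitly, since your closing sentence about ``no accessibility condition'' is literally true only in the essentially small case.
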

From now on we shall abuse notation by denoting $\pro(F)$ just by $F$.

Let $f\colon\mathcal{T}\to \mathcal{U}$ be a geometric morphism of $\infty$-topoi.
By definition, the morphism $f^*\colon \m{U}\to \m{T}$ preserves finite limits, hence $\pro(f^*)$ admits a left adjoint which we denote $f_\sharp$. 
Hence, we get functors 
\[f_*,f_\sharp \colon\pro(\m{T}) \adj \pro(\m{U}) \colon f^*\] 
fitting into a pair of adjunctions 
\[
f_\sharp \dashv f^* \dashv f_*.
\]

Recall the definition of the relative homotopy type of a geometric morphism $f$
\begin{definition}
	\label{relTopRal}
	Let $f \colon \m{T}\to \m{U}$ be a geometric morphism of $\infty$-topoi.
	The \emph{relative topological realization of $\m{T}$ over $\m{U}$} 
	is 
	\[  \Et(f)=f_\sharp (\ast_\mathcal{T}) \in \pro(\m{U}), \]
	where $\ast_{\m{T}}$ is the terminal object of $\mathcal{T}$, considered as a constant pro-object.
\end{definition}

\begin{remark}
	In the case where $\m{T}$ is discrete, the above definition coincides with the definition given in \cite{BARNEA2016784}, 
	as the $\infty$-category $\pro{\m{T}}$ is equivalent to the $\infty$- category
	which results from the Barnea-Schlank model structure, as is proven in \cite{proCats}. 
\end{remark}

In \cite{higherStacks}, Carchedi defines the \etale profinite homotopy type of an $\infty$-stack. 
In \cite{topTypeStacks}, Cough defines a relative version of the topological type for a morphism of a $\infty$-stack to a scheme.
For our application, we need a relative version of the \etale topological type for a morphism of $\infty$-stacks. 
Namely, given a morphism of $\infty$-stacks $f\colon\mathfrak{X} \to \mathfrak{Y}$, we wish to define its relative topological realization. 

By the functoriality of $\Sh{\bullet_\et}$, we have a geometric morphism, denoted abusively by  
\[f\colon \Sh{\mathfrak{X}}\to \Sh{\mathfrak{Y}}.\] 

Applying the general theory of relative topological realization, we obtain a pro-sheaf \[\Et(f)=f_\sharp (\ast_{\mathfrak{X}}) \in \Prosh{\mathfrak{Y}_\et}.\]

Our aim now is to give a formula for the relative topological realization of a colimit of a morphism of $\infty$-stacks in terms of the topological realizations of the components. 
To do this we shall recall first some general categorical constructions associated with diagrams of $\infty$-topoi and $\infty$-categories in general. 

\subsection{Adjunctions and Limits of Infinity Categories}
\label{subsection: Abstract Nonsense}
Here we recall and esxpand the results of \cite{AdjDescent}, which we use in our computation of relative topological type of morphisms of stacks. 

Let $I$ be a simplicial set.
Consider a functor $C_\bullet\colon I\to Cat_\infty$, i.e. a diagram of $\infty$-categories of shape $I$.   
In this case, one can form the inverse limit $\invlim C_\bullet$ and the lax limit $\laxlim C_\bullet$ (see e.g. \cite[Definition 4.1]{AdjDescent}). 
There is also a notion of op-lax limit for this diagram, which is just  
\[
\oplaxlim C_\bullet \colon= (\laxlim C_\bullet^{op})^{op}.
\]
Both limits comes equipped with fully faithful embeddings $\invlim C_\bullet \to \laxlim C_\bullet$ and $\invlim C_\bullet \to \oplaxlim C_\bullet$. 

\begin{proposition}\cite[Proposition 5.1]{AdjDescent}
	\label{proposition: Lax adjoint exist}
	Let $\phi_\bullet\colon C_\bullet \to D_\bullet$ be a morphism of $I$ diagrams in $Cat_\infty$. 
	If for every $i\in I$ the functor $\phi_i \colon C_i \to D_i$ admits a right (resp. left) adjoint, then the induced functor $\laxlim (\phi_\bullet)\colon \laxlim C_\bullet \to \laxlim D_\bullet$ (resp. $\oplaxlim (\phi_\bullet) \colon \oplaxlim C_\bullet \to \oplaxlim D_\bullet$) admits a right (resp. left) adjoint.  
\end{proposition}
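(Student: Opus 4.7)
The plan is to model the (op)lax limits as $\infty$-categories of sections of fibrations and then reduce the statement to a parametrized adjoint functor theorem. First, using straightening/unstraightening I would replace each of $C_\bullet, D_\bullet \colon I \to Cat_\infty$ by its unstraightened Cartesian fibration, which I denote $\int C_\bullet \to I$ and $\int D_\bullet \to I$. In this model $\laxlim C_\bullet$ is equivalent to the $\infty$-category $\Fun_{/I}(I, \int C_\bullet)$ of sections, and $\phi_\bullet$ unstraightens to a morphism $\Phi \colon \int C_\bullet \to \int D_\bullet$ of Cartesian fibrations over $I$ (preserving Cartesian edges). The induced map $\laxlim(\phi_\bullet)$ is then postcomposition with $\Phi$.

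Second, I would show that $\Phi$ admits a right adjoint $\Psi$ \emph{relative to $I$}, i.e. in the $\infty$-category $Cat_\infty/I$. The fiberwise right adjoints $\psi_i$ do not in general assemble into a strict morphism of $I$-diagrams (they satisfy only a lax Beck--Chevalley compatibility with the transition functors), which is precisely why the lax limit, rather than the honest limit, is the right setting in which to formulate the conclusion. A relative adjoint functor theorem in the $\infty$-categorical setting (as in \cite[\S 7.3.2]{LurieHTT}) nevertheless glues the $\psi_i$ into an honest right adjoint $\Psi$ over $I$; this $\Psi$ generally fails to preserve Cartesian edges, but it recovers the $\psi_i$ on fibers. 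Postcomposition with an adjunction in $Cat_\infty/I$ is an adjunction on sections, producing the desired right adjoint to $\laxlim(\phi_\bullet)$.

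The oplax case follows formally by dualization: identifying $\oplaxlim C_\bullet = (\laxlim C_\bullet^{op})^{op}$ and applying the lax case to $\phi_\bullet^{op} \colon C_\bullet^{op} \to D_\bullet^{op}$, whose fiberwise right adjoints are the opposites of the given $\lambda_i$, yields a left adjoint on oplax limits after taking opposites once more.

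I expect the main obstacle to be the middle step: coherently gluing the units and counits of the fiberwise adjunctions into a single relative adjunction over $I$. This coherence is the technical heart of the cited result in \cite{AdjDescent}, and it encodes exactly the higher homotopical data that distinguishes a lax morphism of diagrams from a strict one.
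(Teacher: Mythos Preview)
Your approach is essentially the one the paper adopts (following \cite{AdjDescent}): unstraighten the diagrams to fibrations over $I$, produce a relative right adjoint $\Psi$ over $I$ from the fiberwise adjoints, and obtain the adjoint on lax limits by postcomposition with $\Psi$ on sections. The paper sketches exactly this construction in the paragraph preceding Proposition~\ref{theorem: BC map then has left adjoint}, and your dualization for the oplax/left case matches the paper's Remark.

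One terminological point to fix: functors $I \to Cat_\infty$ unstraighten to \emph{coCartesian} fibrations over $I$, and under the paper's conventions (inherited from \cite{AdjDescent}) it is the category of \emph{all} sections of this coCartesian fibration that models $\laxlim C_\bullet$, while coCartesian sections model $\invlim C_\bullet$. If you genuinely take the Cartesian unstraightening (over $I^{op}$), sections would compute the oplax limit instead, and your argument would prove the left/oplax case directly rather than the right/lax one. So replace ``Cartesian'' by ``coCartesian'' throughout; the relative adjoint $\Psi$ then fails to preserve \emph{coCartesian} edges in general, which is why the adjoint lives only on the lax limit---exactly as you say.
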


\begin{remark}
	The result in $\cite{AdjDescent}$ is stated only for right adjoints. The case of left adjoints follows in a similar (but dual) manner.  
\end{remark}




As in \cite[Remark 5.2]{AdjDescent}, even if $f_\bullet\colon C_\bullet \to D_\bullet$ admits right (or left) adjoints level-wise, then the induced functor $\invlim(\phi_\bullet)\colon \invlim C_\bullet \to \invlim C_\bullet$ might not admit a right (or left) adjoint. However, under some extra assumptions it does. 

\begin{definition}
	Let 
	\[\xymatrix{
		A \ar^{u}[d]\ar^{f}[r] & B\ar^{v}[d]   \\
		C \ar^{g}[r] & D  \\
	}\]
	be a commutative square in $Cat_\infty$ (i.e. a natural isomorphism $gu\stackrel{\sim}{\to} vf$). 
	Suppose that $u$ and $v$ admit left adjoints $L_u$ and $L_v$. Let 
	$BC\colon L_vg \to fL_u$ denote the \emph{Beck-Chevalley} natural transformation. If $BC$ is an equivalence, we say that the commutative square satisfies the \emph{left Beck-Chevalley condition} (or, in short, left BC-condition).   
\end{definition} 

One can easily define the dual notion of right BC-condition, in case where $u,v$ admit right adjoints. As a matter of convention, the BC-condition shall always refer to the left (or right) adjoints of the \emph{vertical} maps. 

\begin{lemma}
	\label{lemma: two-sided restriction of adjunctions}
	Let $F\colon C\adj D\colon G$ be an adjunction between $\infty$-categories. Let $C'\subseteq C$ and $D'\subseteq D$ be full subcategories, such that $F|_{C'}$ factors through $D'$ and $G|_{D'}$ factors through $C'$. Then $G|_{D'}$ is a right adjoint to $F|_{C'}$ and the square  
	\[
	\xymatrix{
		C'\ar@{^{(}->}[r]\ar^{F|_{C'}}[d] & C\ar^{F}[d] \\ 
		D'\ar@{^{(}->}[r] & D
	}
	\]
	satisfies the right Beck-Chevalley condition. 
	Similarly, the canonical square 
	\[
	\xymatrix{
		D'\ar@{^{(}->}[r]\ar^{G|_{D'}}[d] & D\ar^{G}[d] \\
		C'\ar@{^{(}->}[r] & C \\ 
	}
	\]
	satisfies the left Beck-Chevalley condition. 
\end{lemma}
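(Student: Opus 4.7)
The plan is to first establish the adjunction $F|_{C'}\dashv G|_{D'}$ via the hom-space criterion for $\infty$-categorical adjunctions, and then deduce the two Beck-Chevalley conditions by observing that the restricted adjunction shares its unit and counit with the ambient one under the given fullness and closure hypotheses.

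For the adjunction itself, I would use that $C'\subseteq C$ and $D'\subseteq D$ are full, so the mapping spaces satisfy $\Hom_{C'}(c,c')\simeq \Hom_C(c,c')$ and $\Hom_{D'}(d,d')\simeq \Hom_D(d,d')$. Composing with the adjunction equivalence $\Hom_D(F(c),d)\simeq \Hom_C(c,G(d))$, and using the hypotheses $F(C')\subseteq D'$ and $G(D')\subseteq C'$, produces a natural equivalence
\[
\Hom_{D'}(F|_{C'}(c),d) \;\simeq\; \Hom_{C'}(c,G|_{D'}(d))
\]
that realises the desired adjunction. Unraveling this equivalence, one sees that the unit of $F|_{C'}\dashv G|_{D'}$ at $c\in C'$ is, via the full embedding $C'\hookrightarrow C$, the ambient unit $\eta_c\colon c\to GF(c)$, which lives in $C'$ precisely because $GF(c)\in C'$ by the closure hypotheses; dually, the restricted counit at objects of $D'$ is the ambient counit.

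With these identifications, the right Beck-Chevalley transformation for the first square
\[
\iota_{C'}\circ G|_{D'} \;\longrightarrow\; G\circ \iota_{D'}
\]
unfolds into the composite of the unit of $F\dashv G$ whiskered appropriately, the canonical equivalence $F\circ\iota_{C'}\simeq \iota_{D'}\circ F|_{C'}$ coming from the commutativity of the square, and the counit of the restricted adjunction. Substituting the identifications from the previous paragraph, this composite becomes pointwise the triangle composite
\[
G(d') \xrightarrow{\eta_{G(d')}} GFG(d') \xrightarrow{G(\epsilon_{d'})} G(d')
\]
at $d' = \iota_{D'}(d)$, which is the identity by the $G$-triangle identity for $F\dashv G$. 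Hence the right BC transformation is an equivalence.

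The left Beck-Chevalley condition for the second square reduces analogously to the $F$-triangle identity $(\epsilon F)\circ (F\eta)=\id_F$, or alternatively can be deduced by applying the already-proved first part to the opposite adjunction $G^{op}\dashv F^{op}$ between $D^{op}$ and $C^{op}$ with full subcategories $(D')^{op}$ and $(C')^{op}$. The main technical obstacle is the coherence bookkeeping in the second paragraph: in the $\infty$-categorical setting one does not literally equate morphisms but rather needs equivalences of natural transformations, most cleanly organized via the description of adjunctions through (co)Cartesian fibrations as in \cite[Chapter 5]{LurieHTT}. Once this coherence is in place, the content of the lemma is essentially formal.
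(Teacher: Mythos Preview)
Your proof is correct and follows essentially the same route as the paper's: both establish the restricted adjunction from fullness, identify the restricted unit and counit with the ambient ones, and then observe that the Beck--Chevalley map, evaluated at $d'\in D'$, becomes the zigzag composite $G(d')\xrightarrow{\eta}GFG(d')\xrightarrow{G\epsilon}G(d')$, which is an equivalence by the triangle identity. Your remark on the $\infty$-categorical coherence bookkeeping is a fair caveat but does not represent a different strategy.
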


\begin{proof}
	First, clearly $G|_{D'}$ is right adjoint to $F|_{C'}$ (compare \cite[Lemma 5.4]{AdjDescent}). 
	We shall show that the first square satisfies the Beck-Chevalley conditions, as the second follows analogously. 
	Let $u\colon id_{C} \to GF$ and $c: FG \to id_{D}$ denote the unit and counit of the adjunction respectively. Let $i_C\colon C'\into C$ and $i_D \colon D'\into D$ denote the fuly faithful embeddings.
	The Beck-Chevalley map of the first square is, by definition, the composition 
	\[
	Gi_D \stackrel{(Gi_D) c|_{D'}}{\to} Gi_D (F|_{C'} G|_{D'}) \cong (G F) i_C G|_{D'} \stackrel{u(i_C G|_{D'})}{\to} i_C G_{D'}
	\]
	When applied to an object $d\in D' \subseteq D$, up to identifications of objects in a full subcategory with their image in the embient category, this is just the composition 
	$G(d)\stackrel{u}{\to} GFG(d)= GFG(d)\stackrel{c}{\to} G(d)$. This composition is an equivalence by the zygzag indentity for the adjunction $F \vdash G$.  
\end{proof}

\begin{remark}
	Note that, clearly, the Beck-Chevalley map of the first square is precisely the natural transformation rendering the second diagram commutative and vice versa.
\end{remark}

\begin{definition}
	Let $f_\bullet\colon C_\bullet \to D_\bullet$ be a natural transformation of $I$-shaped diagram of $\infty$-categories. We say that $f_\bullet$ satisfies the right (resp. left) BC-condition, if for every $i\in I$ the functor $f_i$ admits right (resp. left) adjoint and for every $e\colon i\to j$ in $I$ the resulting commutative square 
	\[
	\xymatrix{
		C_i \ar^{e_*}[r]\ar^{f_i}[d] &  C_j \ar^{f_j}[d]\\
		D_i \ar^{e_*}[r] & D_j 
	}
	\]
	satisfies the right (resp. left) Beck-Chevalley condition.
\end{definition}

Before we state our main usage of Beck-Chevalley transformations, let us discuss a little bit the construction of the right (or left) adjoint on the level of lax limits. As usual we shall discuss the case where the transformation has right adjoint, leaving the dual varification to the reader.  
If $f_\bullet :C_\bullet \to D_\bullet$ is a natural transformation of $I$-shaped diagrams of $\infty$-categories, and if $f_i \colon C_i \to D_i$ has a right adjoint $g_i$ for every $i\in I$, then $f_{lax}$ has a right adjoint $g_{lax}$. Let $\int_I C\bullet \to I$ and $\int_I D_\bullet \to I$ denote the respective coCartesian fibrations over $I$. Then $g_{lax}$ is obtained from a functor $G: \int_I D_\bullet \to \int_I C_\bullet$ (over $I$) by applying $G$ to sections of the structure map $\int_I D_\bullet \to I$. 
This functor restricts for every $i\in I$ to a functor $G_i : D_i \to C_i$ which is naturally identified with $g_i$, see \cite[Proposition 5.1]{AdjDescent}. By inspection of the construction of $G$ as in \cite[Section 5]{AdjDescent}, one can identify also how $G$ acts on morphisms. Every morphism in $\int_I D_\bullet$ canonically factors as a composition of a coCartesian morphism and a morphism lying in a single $D_i$. Hence, given the equivalence $G|_{D_i} \simeq g_i$, it suffices to describe the application of $G$ on coCartesian edges.  
Let $i,j\in I$ and $e\colon i\to j$ be an edge. Then we have induced functors $e_* : D_i \to D_j$ and $e_* : C_i \to C_j$. The fact that $f$ is a natural transformation gives us a commutative square 
\[
\xymatrix{
	C_i \ar^{e_*}[r]\ar^{f_i}[d] &  C_j \ar^{f_j}[d]\\
	D_i \ar^{e_*}[r] & D_j 
}
.\]
For every $d \in D_i \subseteq \int_I D_\bullet$ we have an essentially unique coCartesian edge $\psi: d \to e_* d$. Then, $G(\psi)$ is the composition of the unique coCartesian edge $\psi'\colon g_i(d)\cong G(d)\to e_* G(d)\cong e_* g_i(d)$ with the Beck-Chevalley map of the square above, namely the map 
\[BC : e_* g_i(d) \to g_j e_*(d)\cong G(e_*(d)).\]

\begin{proposition}
	\label{theorem: BC map then has left adjoint}
	Let $f_\bullet\colon C_\bullet \to D_\bullet$ be a morphism of diagrams of $\infty$-categories. If $f_\bullet$ satisfies the right (resp. left) Beck-Chevalley condition, then the induced functor 
	$\invlim f_\bullet\colon \invlim C_\bullet \to \invlim D_\bullet$ admit a right (resp. left) adjoint. 
	Moreover, the canonical commutative square
	\[
	\xymatrix{
		\invlim C_\bullet \ar@{^{(}->}[r] \ar^{\invlim f_\bullet}[d]& \laxlim C_\bullet \ar^{\laxlim f_\bullet}[d]\\  
		\invlim D_\bullet \ar@{^{(}->}[r]& \laxlim D_\bullet 
	}
	\]
	(resp. the square 
	\[
	\xymatrix{
		\invlim C_\bullet \ar@{^{(}->}[r] \ar^{\invlim f_\bullet}[d]& \oplaxlim C_\bullet \ar^{\oplaxlim f_\bullet}[d]\\  
		\invlim D_\bullet \ar@{^{(}->}[r]& \oplaxlim D_\bullet 
	}
	\]
	)
	satisfies the right (resp. left) Beck-Chevalley condition. 
\end{proposition}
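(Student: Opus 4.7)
The plan is to obtain the adjoint on $\invlim$ by restricting the adjoint that already exists on $\laxlim$ (or $\oplaxlim$) by \Pref{proposition: Lax adjoint exist}, and then to invoke \Lref{lemma: two-sided restriction of adjunctions} to upgrade the construction to the Beck-Chevalley statement essentially for free. I will treat the right-adjoint case; the left-adjoint case is strictly dual, replacing $C_\bullet$ by $C_\bullet^{op}$ and $\laxlim$ by $\oplaxlim$.

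By \Pref{proposition: Lax adjoint exist} the functor $\laxlim f_\bullet$ admits a right adjoint, which I call $g_{lax}$. The key observation is that $\invlim C_\bullet$ sits inside $\laxlim C_\bullet$ as the full subcategory of those sections $s \colon I \to \int_I C_\bullet$ of the associated coCartesian fibration that take every edge of $I$ to a coCartesian edge (and similarly for $D_\bullet$). Since $f_\bullet$ is a natural transformation, the induced map on total spaces preserves coCartesian edges, so $\laxlim f_\bullet$ sends Cartesian sections to Cartesian sections; in particular its restriction to $\invlim C_\bullet$ factors through $\invlim D_\bullet$ and is canonically identified with $\invlim f_\bullet$.

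The main technical step is to show that $g_{lax}$ likewise preserves Cartesian sections. Let $s$ be a Cartesian section of $\int_I D_\bullet \to I$ and let $e \colon i \to j$ be an edge of $I$. Then $s(e)$ is a coCartesian lift of $e$, and from the explicit description of $G$ recalled just before the statement, $g_{lax}(s)(e) = G(s(e))$ factors as the canonical coCartesian edge $g_i(s(i)) \to e_*g_i(s(i))$ followed by the Beck-Chevalley map $e_*g_i(s(i)) \to g_j(e_*s(i)) = g_j(s(j))$. The right BC-condition on $f_\bullet$ is exactly the assertion that this Beck-Chevalley map is an equivalence, so $G(s(e))$ is coCartesian and $g_{lax}(s)$ is indeed Cartesian. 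This is the only place where the hypothesis on $f_\bullet$ enters, and it is the main obstacle: everything else is formal, but one must identify $G$ on coCartesian edges precisely enough to recognize the BC map inside the composite.

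With both restrictions in place, the hypotheses of \Lref{lemma: two-sided restriction of adjunctions} are met with $C' = \invlim C_\bullet$, $D' = \invlim D_\bullet$, $F = \laxlim f_\bullet$ and $G = g_{lax}$. Applying that lemma simultaneously produces $g_{lax}|_{\invlim D_\bullet}$ as a right adjoint to $\invlim f_\bullet$ and supplies the right Beck-Chevalley condition for the displayed square, settling both halves of the proposition at once.
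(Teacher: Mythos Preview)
Your proof is correct and follows essentially the same approach as the paper: both arguments use \Pref{proposition: Lax adjoint exist} to obtain the adjoint on the lax limit, verify via the explicit description of $G$ on coCartesian edges that the right BC-hypothesis forces $g_{lax}$ to preserve Cartesian sections, and then invoke \Lref{lemma: two-sided restriction of adjunctions} to conclude. If anything, you are slightly more explicit than the paper in also noting that $\laxlim f_\bullet$ itself preserves Cartesian sections, which is the other hypothesis needed for that lemma.
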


\begin{proof}
	We do the right case, as the left case is completely analogous. 
	Let $g_i$ denote the right adjoint to $f_i$. 
	By Lemma \ref{lemma: two-sided restriction of adjunctions},
	it suffices to prove that the restriction to $\invlim D_\bullet$ of the right adjoint to $\laxlim f_\bullet$ factors through $\invlim C_\bullet$. 
	Namely, it would suffice to show that, in the notation of the discussion above this proposition, $G$ sends coCartesian edges to coCartesian edges. If $\psi\colon d\to d'$ is coCartesian, then $G(\psi)$ is a composition of a coCartesian edge with the Beck-Chevalley map. By the assumption, those Beck-Chevalley maps are isomorphisms, so $G$ send coCartesian edges to coCartesian edges.   
	
\end{proof}

We now turn to discuss descent properties of the Beck-Chevalley condition from diagrams to their limits. 

\begin{proposition}
	\label{proposition: lax lim beck chevalley descent}
	Let $I$ be a small $\infty$-category.
	Let
	\[\xymatrix{
		A_\bullet \ar^{f_\bullet}[r] \ar^{u_\bullet}[d]& \ar^{v_\bullet}[d] B_\bullet \\
		C_\bullet \ar^{g_\bullet}[r] & D_\bullet
	}
	\]
	be a commutative square of $I$-shaped diagrams of $\infty$-categories. 
	If for every $i\in I$ the square
	\[\xymatrix{
		A_i \ar^{f_i}[r] \ar^{u_i}[d]& \ar^{v_i}[d] B_i \\
		C_i \ar^{g_i}[r] & D_i
	}
	\]
	
	satisfies the right (resp. left) Beck-Chevalley condition, then
	the square
	
	\[\xymatrix{
		\laxlim A_\bullet \ar^{\laxlim f_\bullet}[r] \ar^{\laxlim u_\bullet}[d]& \ar^{\laxlim v_\bullet}[d] \laxlim B_\bullet \\
		\laxlim C_\bullet \ar^{\laxlim g_\bullet}[r] & \laxlim D_\bullet
	}
	\] 
	(resp. the square
	\[\xymatrix{
		\oplaxlim A_\bullet \ar^{\oplaxlim f_\bullet}[r] \ar^{\oplaxlim u_\bullet}[d]& \ar^{\oplaxlim v_\bullet}[d] \oplaxlim B_\bullet \\
		\oplaxlim C_\bullet \ar^{\oplaxlim g_\bullet}[r] & \oplaxlim D_\bullet
	}
	\] 
	)
	satisfies the right (resp. left) Beck-Chevalley condition. 
\end{proposition}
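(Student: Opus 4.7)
The plan is to reduce the Beck--Chevalley condition at the (op)lax-limit level to the pointwise Beck--Chevalley conditions, by exploiting the explicit levelwise description of the adjoints provided just before \Pref{theorem: BC map then has left adjoint}. I treat the right/$\laxlim$ case; the left/$\oplaxlim$ case follows by applying the right case to the diagram of opposite categories.

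First, I would invoke \Pref{proposition: Lax adjoint exist}: since every $u_i$ and $v_i$ admit right adjoints $R_{u_i}$ and $R_{v_i}$, the induced functors $\laxlim u_\bullet$ and $\laxlim v_\bullet$ admit right adjoints, which I denote $R_u^{\mathrm{lax}}$ and $R_v^{\mathrm{lax}}$. Consequently the Beck--Chevalley transformation
\[
BC^{\mathrm{lax}}\colon \laxlim f_\bullet \circ R_u^{\mathrm{lax}} \;\to\; R_v^{\mathrm{lax}} \circ \laxlim g_\bullet
\]
is defined, and the task becomes to prove that $BC^{\mathrm{lax}}$ is a natural equivalence.

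The key observation is the detection principle for equivalences in $\laxlim$. Every object of $\laxlim D_\bullet$ is a section $Y\colon I \to \int_I D_\bullet$ of the coCartesian fibration, and a morphism between two such sections is an equivalence iff its component $Y_i$ at each $i\in I$ is an equivalence in $D_i$. Hence it suffices to check that $(BC^{\mathrm{lax}})_Y$ evaluated at each $i\in I$ is an equivalence in $C_i$. Now, from the description of the construction of $R_u^{\mathrm{lax}}$ recalled right before \Pref{theorem: BC map then has left adjoint}, $R_u^{\mathrm{lax}}$ is obtained from a functor $\int_I C_\bullet \to \int_I A_\bullet$ over $I$ whose restriction to the fiber over $i$ is naturally identified with $R_{u_i}$; in particular, for any $Y$ one has $(R_u^{\mathrm{lax}}(Y))_i \simeq R_{u_i}(Y_i)$, and the unit and counit of the lax-limit adjunction evaluate at $i$ to those of the adjunction $u_i \dashv R_{u_i}$. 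The same applies to $R_v^{\mathrm{lax}}$.

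Since the Beck--Chevalley map is produced from the unit and counit by a universal (mate-calculus) recipe that is natural in the data, these levelwise identifications imply $(BC^{\mathrm{lax}})_Y \big|_i \simeq BC_i(Y_i)$, where $BC_i$ is the Beck--Chevalley transformation of the $i$-th pointwise square. By hypothesis each $BC_i$ is an equivalence, so $(BC^{\mathrm{lax}})_Y$ is a levelwise equivalence, hence an equivalence in $\laxlim C_\bullet$, and $BC^{\mathrm{lax}}$ is a natural equivalence as required. The main obstacle is the verification that formation of the mate commutes with the evaluation functors $\laxlim C_\bullet \to C_i$ and $\laxlim A_\bullet \to A_i$; all the content of the proof is bundled in this naturality, after which both equivalence detection and the dualization to the left/$\oplaxlim$ case are formal.
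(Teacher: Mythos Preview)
Your approach is correct and essentially identical to the paper's own proof: the paper simply asserts that the Beck--Chevalley map of the lax-limit square restricts at every $i\in I$ to the pointwise Beck--Chevalley map, and then concludes from the fact that equivalences in $\laxlim$ and $\oplaxlim$ are detected levelwise. Your write-up is just a more detailed unpacking of exactly these two steps (one small slip: the components of $BC^{\mathrm{lax}}$ live in $B_i$, not $C_i$, since the target of $f\,R_u \to R_v\,g$ is $\laxlim B_\bullet$).
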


\begin{proof}
	It is easy to see that the Beck-Chevalley map of the square of lax limits restricts at every $i \in I$ to the Beck-Chevalley map of the square 
	\[\xymatrix{
		A_i \ar^{f_i}[r] \ar^{u_i}[d]& \ar^{v_i}[d] B_i \\
		C_i \ar^{g_i}[r] & D_i
	}
	\] 
	The result now follows from the fact that equivalences in $\laxlim B_\bullet$ and $\oplaxlim B_\bullet$ are those maps which restrict to an equivalence at every $i\in I$.
\end{proof}

Before we state the descent result of BC-conditions, let us recall some pasting conditions for BC-squares. 
Suppose we are given a commutative diagram 
\[
\xymatrix{
	A \ar^{a}[r] \ar^{f}[d] & B \ar^{b}[r] \ar^{g}[d] & C\ar^{h}[d] \\ 
	A' \ar^{a'}[r] & B' \ar^{b'}[r] & C'  
}
\]
of $\infty$-categories for which all the vertical functors admit right adjoints. Then, the BC-map of the outer square is the composition of the BC-maps of the two smaller squares, namely the right BC map of the outer square is given by 
\[
ba R_f \stackrel{b BC}{\to} bR_ga \stackrel{BC a'}{\to}R_hb'a'
.\]

In particular, if the two small squares satisfies the right BC-condition so is the outer square. Moreover, if $b$ is conservative, the outer and the left squares satisfies the right BC condition, then so does the right square. 
We call those two properties \emph{horizontal pasting} of Beck-Chevalley squares. (The case for the left BC condition is similar).

\begin{theorem}
	\label{theorem: inverse limits Beck Chevalley descent}
	Let $I$ be a small $\infty$-category and let
	\[\xymatrix{
		A_\bullet \ar^{f_\bullet}[r] \ar^{u_\bullet}[d]& \ar^{v_\bullet}[d] B_\bullet \\
		C_\bullet \ar^{g_\bullet}[r] & D_\bullet
	}
	\]
	be a commutative square of $I$-shaped diagrams of $\infty$-categories. Suppose that $u_\bullet$ and $v_\bullet$ satisfy the right (resp. left) Beck-Chevalley condition, and for every $i\in I$ the restricted square
	\[\xymatrix{
		A_i \ar^{f_i}[r] \ar^{u}[d]& \ar^{v}[d] B_i \\
		C_i \ar^{g}[r] & D_i
	}
	\]
	satisfies the right (resp. left) Beck-Chevalley condition. Then the square 
	\[\xymatrix{
		\invlim A_\bullet \ar^{\invlim f_\bullet}[r] \ar_{\invlim u_\bullet}[d]& \ar^{\invlim v_\bullet}[d] \invlim B_\bullet \\
		\invlim C_\bullet \ar^{\invlim g_\bullet}[r] & \invlim D_\bullet
	}
	\]
	satisfies the right (resp. left) Beck-Chevalley condition. 
\end{theorem}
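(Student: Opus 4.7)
The plan is to handle the right Beck--Chevalley case; the left case is formally dual. The strategy is to realize the target square as the left half of a three-column pasting whose right half and whose outer rectangle both already satisfy the right BC-condition, and then to conclude by horizontal cancellation against the conservative inclusion $\invlim \into \laxlim$.

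First I would assemble three auxiliary BC-squares from the hypotheses. Applying Proposition \ref{theorem: BC map then has left adjoint} to $u_\bullet$ (respectively $v_\bullet$) gives that the canonical comparison square
\[
\xymatrix{
\invlim A_\bullet \ar@{^{(}->}[r] \ar^{\invlim u_\bullet}[d] & \laxlim A_\bullet \ar^{\laxlim u_\bullet}[d] \\
\invlim C_\bullet \ar@{^{(}->}[r] & \laxlim C_\bullet
}
\]
(respectively its $v$-analogue) satisfies the right BC-condition. Applying Proposition \ref{proposition: lax lim beck chevalley descent} to the original diagram-level square (whose restrictions to each $i\in I$ satisfy right BC by assumption) yields that the lax-limit square with top $\laxlim f_\bullet$, bottom $\laxlim g_\bullet$, and vertical maps $\laxlim u_\bullet, \laxlim v_\bullet$ satisfies the right BC-condition as well.

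Next I would paste the $u$-comparison square to the left of the $\laxlim$-square to form
\[
\xymatrix{
\invlim A_\bullet \ar@{^{(}->}[r] \ar^{\invlim u_\bullet}[d] & \laxlim A_\bullet \ar^{\laxlim f_\bullet}[r] \ar^{\laxlim u_\bullet}[d] & \laxlim B_\bullet \ar^{\laxlim v_\bullet}[d] \\
\invlim C_\bullet \ar@{^{(}->}[r] & \laxlim C_\bullet \ar^{\laxlim g_\bullet}[r] & \laxlim D_\bullet
}
\]
whose outer rectangle satisfies right BC by horizontal pasting. By the naturality of the embedding $\invlim \into \laxlim$ with respect to the morphisms of diagrams $f_\bullet$ and $g_\bullet$, this outer rectangle coincides with the outer rectangle of the alternative factorization
\[
\xymatrix{
\invlim A_\bullet \ar^{\invlim f_\bullet}[r] \ar^{\invlim u_\bullet}[d] & \invlim B_\bullet \ar@{^{(}->}[r] \ar^{\invlim v_\bullet}[d] & \laxlim B_\bullet \ar^{\laxlim v_\bullet}[d] \\
\invlim C_\bullet \ar^{\invlim g_\bullet}[r] & \invlim D_\bullet \ar@{^{(}->}[r] & \laxlim D_\bullet
}
\]
whose right small square is the $v$-comparison square, which already satisfies right BC.

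Finally, horizontal cancellation kicks in. The outer BC-map of the second rectangle factors as the inclusion $\invlim B_\bullet \into \laxlim B_\bullet$ whiskered with the left square's BC-map, followed by the right square's BC-map whiskered with $\invlim g_\bullet$. Since the outer BC-map and the right square's BC-map are both equivalences, the inclusion whiskered with the left BC-map is an equivalence; and since $\invlim B_\bullet \into \laxlim B_\bullet$ is fully faithful, hence conservative, the left BC-map itself is an equivalence. This is precisely the right BC-condition for the target square. The main obstacle I anticipate is the bookkeeping that ensures the two outer rectangles genuinely coincide, which reduces to the naturality of $\invlim \into \laxlim$ on the morphisms of diagrams $f_\bullet, g_\bullet, u_\bullet, v_\bullet$; once this is in place the rest is a formal diagram chase using tools already developed in this section.
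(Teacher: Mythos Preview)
Your proof is correct and follows essentially the same approach as the paper. Both arguments use Proposition~\ref{theorem: BC map then has left adjoint} and Proposition~\ref{proposition: lax lim beck chevalley descent} to establish BC for the comparison squares and the lax-limit square, then paste and cancel against the fully faithful inclusion $\invlim\hookrightarrow\laxlim$; the paper organizes this via a commutative cube while you write out the two rectangular factorizations explicitly, but the content is identical.
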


\begin{proof}
	We do the right case, the left follows analogously. 
	Consider the commutative cube 
	\[\xymatrix{
		\invlim A_\bullet \ar^(0.5){\invlim f_\bullet}[rr] \ar_{\invlim u_\bullet}[dd] \ar@{^{(}->}[rd]&&   \ar^(0.7){\invlim v_\bullet}[dd] \invlim B_\bullet \ar@{^{(}->}[rd] & \\
		& \laxlim A_\bullet \ar^(0.3){\laxlim f_\bullet}[rr] \ar^(0.3){\laxlim u_\bullet}[dd]&         & \laxlim B_\bullet \ar^{\laxlim v_\bullet}[dd]  \\
		\invlim C_\bullet \ar^(0.3){\invlim g_\bullet}[rr] \ar@{^{(}->}[rd] && \invlim D_\bullet \ar@{^{(}->}[rd]& \\ 
		&\laxlim C_\bullet \ar^(0.5){\laxlim g_\bullet}[rr]&                   & \laxlim D_\bullet 
		\\
	}
	\]
	The right and left faces satisfies the right Beck-Chevalley condition by Proposition \ref{prop: left adjoint exist}. The front face satisfies the right Beck-Chevalley condition by Proposition \ref{proposition: lax lim beck chevalley descent}. 
	Since the inclusion of the lax limit to the limit is fully faithful, to show that the back-face satisfies the right BC-condition, its enough to show that its horizontal pasting with the right face satisfies the right BC-condition. By the commutativity of the diagram, this is the same as the BC-condition for the horizontal pasting of the left face and the front face. Since these two satisfy the right Beck-Chevalley condition, we are done. 
\end{proof}

\subsection{Relative Homotopy Type of Colimits of Infinity Topoi}
Let $I$ be a small $\infty$-category and let $f_\bullet\colon \m{T}_\bullet \to  \m{U}_\bullet$ be a morphism of $I$-shaped diagrams in $\mathfrak{Top}_\infty$.
By definition of colimits in $\mathfrak{Top}_\infty$, the colimits $\colim \m{T}_\bullet$ and $\colim \m{U}_\bullet$ are actually the limits, computed with respect to the left adjoints corresponding to the structure geometric morphisms \cite[Proposition 6.3.3.1]{LurieHTT}.
Hence, we can apply the results of Subsection \ref{subsection: Abstract Nonsense} to give a formula for the relative homotopy type of the colimit $\colim f_\bullet$. 

The diagrams $\m{T}_\bullet$ and $\m{U}_\bullet$ can be considered as $I^{op}$-shaped diagrams in $Cat_\infty$ via the forgetful functor $\mathfrak{Top}_\infty^{op} \to Cat_\infty$ \cite[Definition 6.3.1.5]{LurieHTT}. Denote by $(\m{T}_\bullet)^*$ and $(\m{U}_\bullet)^*$ the resulting $I^{op}$-shaped diagrams in $Cat_\infty$. The geometric natural transformation $f_\bullet \colon \m{T}_\bullet \to \m{U}_\bullet$ induces a natural transformation $f_\bullet^* \colon (\m{U}_\bullet)^* \to (\m{T}_\bullet)^*$. 

Let $f=\colim f_\bullet$, $\m{T}=\colim_I \m{T}_\bullet=\invlim_{I^{op}} (\m{T}_\bullet)^*,$ and $\m{U}=\colim_I \m{U}_\bullet=\invlim_{I^{op}} (\m{U}_\bullet)^*$.  
In this case, the functors $f_i^* \colon \pro(\m{U}_i) \to \pro(\m{T}_i)$ admits left adjoints, and hence by Proposition \ref{proposition: Lax adjoint exist} the op-lax functor $f_{lax}^*$ associated with $f_\bullet^*$  admits a left adjoint 
\[f_{lax,\sharp} \colon\oplaxlim \pro(\m{T}_\bullet)^* \to \oplaxlim \pro(\m{U}_\bullet)^*.\]   

Let $\rho_\bullet\colon \m{T}_\bullet\to \m{T}$ and $\nu_\bullet\colon\m{U}_\bullet \to  \m{U}$ denote the comparison geometric morphisms, where the targets are considered as constant $I$-shaped diagrams. 
We can now form the following natural commutative diagram 

\begin{equation}
\xymatrix{
	\oplaxlim \pro(\m{T}_\bullet)^* & \pro(\m{T})^{I}\ar^(.4){\rho_{lax}^*}[l] & \pro(\m{T}) \ar^{\pi^*}[l] \\ 
	\oplaxlim \pro(\m{U}_\bullet)^* \ar^{f_{lax}^*}[u] & \pro(\m{U})^{I} \ar^{(f^*)^I}[u]\ar^(.4){\nu_{lax}^*}[l] &  
	\pro(\m{U}) \ar^{f^*}[u]\ar^{\pi^*}[l] \\ 
}
\end{equation} 
where $\pi\colon I^{op}\to pt$ denote the projection, and $\pi^*$ is pre-composition with $\pi$.
Let $\tilde{\rho}^* = \rho_{lax}^* \pi^*$ and $\tilde{\nu}^* = \nu_{lax}^* \pi^*$. Let $\tilde{\rho}_\sharp$ and $\tilde{\nu}_\sharp$ denote the respective left adjoints.  
Hence, we have a commutative diagram
\begin{equation}
\label{equation: right adjoints diagram adjoint descent}
\xymatrix{
	\oplaxlim \pro(\m{T}_\bullet)^*  & \pro(\m{T}) \ar^(.4){\tilde{\rho}^*}[l] \\ 
	\oplaxlim \pro(\m{U}_\bullet)^* \ar^{f_{lax}^*}[u] &   \pro(\m{U}) \ar^{f^*}[u]\ar^(.4){\tilde{\nu}^*}[l]  
}
\end{equation}
and as a result we obtain a Beck-Chevalley map 
$BC_\sharp: f_{\bullet,\sharp} \tilde{\rho}^*\to \tilde{\nu}^* f_\sharp$. Taking the mate of this map we obtain a natural transformation 
\[\mu\colon \tilde{\nu}_\sharp f_{lax,\sharp} \tilde{\rho}^*\to  f_\sharp.\]

\begin{theorem}
	\label{theorem: formula for push from colimit}
	The natural transformation $\mu$ is an equivalence at every object of the full subcategory $\m{T}\subseteq \pro(\m{T})$. 
\end{theorem}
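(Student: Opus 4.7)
The plan is to test the equivalence $\mu_t$ pointwise via pro-Yoneda in $\pro(\m{U})$ and reduce to a fully faithful embedding statement about $\m{T}$. Since objects of $\pro(\m{U})$ are finite-limit-preserving functors $\m{U}\to\m{S}$, the objects of $\m{U}\subseteq\pro(\m{U})$ corepresent Hom into arbitrary pro-objects, and a morphism in $\pro(\m{U})$ is an equivalence if and only if the induced map on $\Hom(-,u)$ is an equivalence for every $u\in\m{U}$. Hence it suffices to show, for each $t\in\m{T}$ and each $u\in\m{U}$, that the map
\[
\mu_t^{*}\colon \Hom_{\pro(\m{U})}(f_\sharp t,\,u)\longrightarrow \Hom_{\pro(\m{U})}\bigl(\tilde{\nu}_\sharp f_{lax,\sharp}\tilde{\rho}^{*}t,\,u\bigr)
\]
induced by $\mu_t$ is an equivalence of spaces.

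Next I would use, in turn, the adjunctions $\tilde{\nu}_\sharp\dashv\tilde{\nu}^{*}$, $f_{lax,\sharp}\dashv f_{lax}^{*}$, and $f_\sharp\dashv f^{*}$, together with the commutativity $f_{lax}^{*}\tilde{\nu}^{*}\simeq \tilde{\rho}^{*}f^{*}$ coming from diagram \eqref{equation: right adjoints diagram adjoint descent}, to identify the domain of $\mu_t^{*}$ with $\Hom_{\pro(\m{T})}(t,f^{*}u)$ and the codomain with $\Hom_{\oplaxlim\pro(\m{T}_\bullet)^{*}}(\tilde{\rho}^{*}t,\tilde{\rho}^{*}f^{*}u)$. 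Because $\mu$ is by construction the mate of the Beck-Chevalley transformation $BC_\sharp$, a triangle-identity chase shows that under these identifications $\mu_t^{*}$ becomes the canonical functoriality map of $\tilde{\rho}^{*}$ on Hom-spaces.

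Finally, since the pro-extension of $f^{*}\colon\m{U}\to\m{T}$ preserves these subcategories, we have $f^{*}u\in\m{T}$, so both $t$ and $f^{*}u$ lie in $\m{T}\subseteq\pro(\m{T})$. Using that $\m{T}=\colim_{I}\m{T}_\bullet$ in $\mathfrak{Top}_\infty$ is, by \cite[Proposition 6.3.3.1]{LurieHTT}, the limit $\invlim_{I^{op}}(\m{T}_\bullet)^{*}$ computed in $Cat_\infty$ via left adjoints, the restriction $\tilde{\rho}^{*}|_\m{T}$ factors as the chain of fully faithful embeddings
\[
\m{T}\xrightarrow{\;\sim\;}\invlim_{I^{op}}(\m{T}_\bullet)^{*}\hookrightarrow\invlim_{I^{op}}\pro(\m{T}_\bullet)^{*}\hookrightarrow\oplaxlim\pro(\m{T}_\bullet)^{*}.
\]
Therefore $\tilde{\rho}^{*}|_\m{T}$ is fully faithful, the functoriality map in question is an equivalence, and hence so is $\mu_t$.

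The step I expect to be the main obstacle is verifying, via the zig-zag identities for the three adjunctions involved, that the composite identification genuinely presents $\mu_t^{*}$ as the plain functoriality map of $\tilde{\rho}^{*}$ rather than a twisted variant; everything else is either formal adjunction calculus as developed in Subsection \ref{subsection: Abstract Nonsense} or a direct appeal to the description of colimits in $\mathfrak{Top}_\infty$ and the fully faithful inclusion of inverse limits into oplax limits.
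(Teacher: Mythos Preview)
Your proposal is correct and follows essentially the same approach as the paper: reduce via pro-Yoneda to testing against $u\in\m{U}$, unwind both sides using the adjunctions $f_\sharp\dashv f^*$, $\tilde{\nu}_\sharp\dashv\tilde{\nu}^*$, $f_{lax,\sharp}\dashv f_{lax}^*$ together with the commutativity of \eqref{equation: right adjoints diagram adjoint descent}, and identify $\mu_t^*$ with the functoriality map of $\tilde{\rho}^*$, which is fully faithful on $\m{T}$ by the chain $\m{T}\simeq\invlim(\m{T}_\bullet)^*\hookrightarrow\invlim\pro(\m{T}_\bullet)^*\hookrightarrow\oplaxlim\pro(\m{T}_\bullet)^*$. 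The paper carries out exactly this argument, making the identifications explicit in a single large commutative diagram rather than describing them in prose.
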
 

\begin{proof}
	Let $x\in \m{T}$. We have to show that 
	\[\mu_x\colon
	\tilde{\nu}_\sharp f_{lax,\sharp} \tilde{\rho}^*(x)\to  f_\sharp (x)\]
	is an equivalence. Since the objects of $\m{U}$ co-generate the pro-category $\pro(\m{U})$, it suffices to prove that for every $y\in \m{U}$, the induced map 
	\[\mu_x^*\colon
	\Hom_{\pro(\m{U})}(f_\sharp(x),y)\to 
	\Hom_{\pro(\m{U})}(\tilde{\nu}_\sharp f_{lax,\sharp} \tilde{\rho}^*(x),y). 
	\]
	is an equivalence.
	Consider the following diagram 
	\[
	\tiny{
		\xymatrix{ 
			\Hom_{\pro(\m{U})}(f_\sharp x,y) \ar^{\mu_x^*}[rr]&& \Hom_{\pro(\m{U})}(\tilde{\nu}_\sharp f_{lax,\sharp} \tilde{\rho}^*x,y)\\
			&&  \Hom_{\oplaxlim \pro(\m{U}_\bullet)^*}(f_{lax,\sharp}\tilde{\rho}^*x,\tilde{\nu}^* y)\ar^{\sim}[u]\\
			\Hom_{\pro(\m{T})}(x,f^*y)\ar^{\sim}[uu] \ar^(.4){(\tilde{\rho}^*)}[r]\ar_{\tilde{\rho}^*}[rd]&\Hom_{\oplaxlim \pro(\m{T}_\bullet)^*}(\tilde{\rho}^*x,\tilde{\rho}^*f^*y)\ar^{\sim}[r]&  \Hom_{\oplaxlim \pro(\m{T}_\bullet)^*}(\tilde{\rho}^*x,f_{lax}^*\tilde{\nu}^* y)\ar^{\sim}[u]\\
			&\Hom_{\invlim \pro(\m{T}_\bullet)^*}(\tilde{\rho}^* x,\tilde{\rho}^*f^*y)\ar^{\sim}[r]\ar^{\sim}_{\beta}[u]&  \Hom_{\invlim \pro(\m{T}_\bullet)^*}(\tilde{\rho}^*(x),f_{lax}^*{\tilde{\nu}}^* y)\ar^{\sim}_{\beta}[u] \\
			\Hom_{\m{T}}(x,f^*y)\ar^(.4){\tilde{\rho}^*}[r]\ar^{\sim}_{\alpha}[uu] &\Hom_{\invlim\m{T}_\bullet^*}(\tilde{\rho}^*x,\tilde{\rho}^*f^*y)\ar^{\sim}[r]\ar^{\sim}_{\alpha}[u]&  \Hom_{\invlim \m{T}_\bullet^*}((\tilde{\rho}^*x,f_{lax}^*\tilde{\nu}^*y) \ar^{\sim}_{\alpha}[u] 
	}}
	\]
	in which the arrows labeled $\alpha$ and $\beta$ come from the natural fully faithful embeddings $Id_{Cat_\infty} \to \pro$ and $\invlim \to \oplaxlim$, the horizontal unlabeled arrows arises from the commutativity of the diagram (\ref{equation: right adjoints diagram adjoint descent}), and the vertical equivalences all come from the various adjunctions. 
	
	The upper big rectangle commute by the definition of $\mu$. The lower left trapezoid and the two lower right rectangles commute by the naturality of $\alpha$ and $\beta$. Finally, the triangle at the middle-left commute by the definition of $\tilde{\rho}^*$ (it factors uniquely through the actual limit). 
	
	Hence, the big outer square commute up to homotopy, and $\mu_x^*$ is an equivalence if and only if the bottom left horizontal map   
	\[\Hom_{\m{T}}(x,f^*y) \stackrel{\tilde{\rho}^*}{\to}\Hom_{\invlim\m{T}_\bullet^*}(\tilde{\rho}^*x,\tilde{\rho}^*f^*y)\] is an equivalence. But $\tilde{\rho}^*$ is induced from the comparison map $\m{T}\to \invlim \m{T}_\bullet^*$ which is an equivalence by the assumption. 
\end{proof}

From this result, one immediately get as a corollary a formula for the relative homotopy type of a colimit of morphisms of $\infty$-topoi. 
Let $f_\bullet: \m{T}_\bullet \to \m{U}_\bullet$ be a morphism of $\infty$-topoi, with colimit $f:\m{T}\to \m{U}$. Then, with the notation as above, we have a well defined element $f_{lax,\sharp} \tilde{\rho}^* \ast_{\m{T}} \in \oplaxlim \m{U}_\bullet^*$. Denote this element by $|f_\bullet|$. Then, by Theorem \ref{theorem: formula for push from colimit} we obtain: 

\begin{corollary}
	\label{corollary: formula for relative type colimit} 
	\[\tilde{\nu}_\sharp |f_\bullet|\cong |f|\]
\end{corollary}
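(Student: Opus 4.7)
The plan is to obtain the corollary as an immediate specialization of \Tref{theorem: formula for push from colimit} at the terminal object. Unwinding the definitions, $|f| = f_\sharp \ast_{\m{T}}$ by \Dref{relTopRal} applied to the colimit geometric morphism $f\colon \m{T}\to \m{U}$, and $|f_\bullet| = f_{lax,\sharp}\tilde{\rho}^*\ast_{\m{T}}$ by the paragraph preceding the corollary. Therefore the claim $\tilde{\nu}_\sharp |f_\bullet|\cong |f|$ is literally the assertion that
\[
\mu_{\ast_{\m{T}}}\colon \tilde{\nu}_\sharp f_{lax,\sharp}\tilde{\rho}^*\ast_{\m{T}} \longrightarrow f_\sharp \ast_{\m{T}}
\]
is an equivalence in $\pro(\m{U})$.

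To conclude, I would simply observe that the terminal object $\ast_{\m{T}}$ lies in the full subcategory $\m{T}\subseteq \pro(\m{T})$ (being represented by a constant pro-object), so \Tref{theorem: formula for push from colimit} applies verbatim. There is essentially no obstacle here; the only point worth remarking is to make sure the constructions of $|f|$ and $|f_\bullet|$ are indeed evaluations of the natural transformation $\mu$ at the same object, namely the terminal object of $\m{T}$, and that this object is identified with its image under the fully faithful embedding $\m{T}\hookrightarrow \pro(\m{T})$. Once this identification is made, the corollary follows in a single line from the theorem.
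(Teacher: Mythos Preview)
Your proposal is correct and is exactly the paper's approach: the paper derives the corollary in one line from \Tref{theorem: formula for push from colimit} by evaluating $\mu$ at the terminal object $\ast_{\m{T}}\in \m{T}\subseteq \pro(\m{T})$, using the definitions $|f|=f_\sharp\ast_{\m{T}}$ and $|f_\bullet|=f_{lax,\sharp}\tilde{\rho}^*\ast_{\m{T}}$.
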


Informally, this means that the relative homotopy type of the colimit is the colimit of the relative homotopy types. 
\subsection{The Relative \Etale Homological Type}

\label{relToSheavesOfComplexes}
In this section we shall recall the theory of sheaves of modules over a ring in a general $\infty$-topos. Note that the case of classical topos and in particular of classical stacks is well studied, and in particular all the results of this section are well known for the case of a classical topoi and stacks. The goal is therefore to promote parts of the classical theory to the $\infty$-categorical and higher stacks settings. 

For a ring $\Lambda$, let $D(\Lambda)$
denote the stable $\infty$-category of complexes of $\Lambda$-modules. Let $\mathcal{S}$ be the $\infty$-category of spaces.
There is an adjunction 
\begin{equation}
\label{equation: free forgetful adjunction}
I_\Lambda \colon \mathcal{S} \adj D(\Lambda) \colon R_\Lambda
\end{equation}
where $R_\Lambda$ denotes the functor $\Hom_{D(\Lambda)}(\Lambda,\bullet)$ and we have
$\pi_n(I_\Lambda(X)) = \HH_n(X,\Lambda)$. 

Let $\prl$ be the $\infty$-category of presentable $\infty$-categories with colimit preserving functors between them .
Then $\prl$ is a symmetric monoidal $\infty$-category as in \cite[Section 4.8]{LurieHA}. In fact, the symmetric monoidal structure satisfies $C\otimes D \cong \RFun(C^{op},D)$, the limits preserving functors from $C^{op}$ to $D$, by \cite[Proposition 4.8.1.17]{LurieHA}. The adjunction (\ref{equation: free forgetful adjunction}) can be seen as a morphism $I_\Lambda\colon \m{S}\to D(\Lambda)$ in $\prl$. 

By definition, every $\infty$-topos is presentable, and moreover we have a canonical functor 
\[\Psi\colon \mathfrak{Top}_\infty^{op}\to \prl\] 
which sends topoi with geometric morphisms between them to the underlying presentable categories with the left adjoint functors between them. 
Tensoring $\Psi$ with the morphism $\m{S}\stackrel{I_\Lambda}{\to} D(\Lambda)$ we obtain a natural transformation in $\prl$,
also denoted $I_\Lambda$, of the form 
\[I_\Lambda \colon \Psi \cong \m{S}\otimes \Psi \to D(\Lambda) \otimes \Psi.\] Denote $\Sh{\m{T},\Lambda}=D(\Lambda) \otimes \m{T}$. This is the $\infty$-category of sheaves of $\Lambda$-modules over $\m{T}$. If $\m{T} = \Sh{C}$ for a site $C$, then $\Sh{\m{T},\Lambda}$ is canonically identified with the $\infty$-category of sheaves of $\Lambda$-modules over $C$.   






For every $\infty$-topos $\m{T}$ the $\infty$-category $\Sh{\m{T},\Lambda}$ is stable. 
Hence for a geometric morphism $f\colon\m{T}\to \m{U}$ 
the induced functor $f^*\colon\Sh{\m{T},\Lambda}\to \Sh{\m{U},\Lambda}$ preserve finite limits.
It follows that after prolongation it admits a left adjoint 
\[f_\sharp\colon \pro(\Sh{\m{U},\Lambda})\to \pro(\Sh{\m{T},\Lambda}).\]

We shall now define the notion of relative homological type of a geometric morphism of $\infty$-topoi. For an $\infty$-topos $\m{T}$ and a ring $\Lambda$ let \[\Lambda_\m{T}=I_\Lambda (*_\m{T}).\] 

\begin{definition}
	Let $f\colon\m{T}\to \m{U}$ be a geometric morphism of $\infty$-topoi. Define the \emph{relative homological type} of $f$ to be 
	\[|f|_\Lambda=f_\sharp \Lambda_\m{T}\in Pro(\Sh{\m{U},\Lambda}).\]
\end{definition}

We would like to compare now the relative homological and the homotopy types of a geometric morphism. Before we do that, we shall discuss some basic properties of the symmetric monoidal structure on $\prl$.

\subsubsection{Tensor Products of Presentable Infinity Categories and Beck-Chevalley Conditions}
Here we shall recall and discuss some basic properties of the tensor product of presentable $\infty$-categories.
Let $C,D\in \prl$ be two presentable $\infty$-categories.
In \cite[Section 4.8]{LurieHA} the equivalence of $\RFun(C^{op},D)$ and  $C\otimes D$ is proved by comparing the co-represented functors. Directly from this identification, it is easy to deduce part of the functoriality of of the right hand side as a functor $\prl \times \prl \to \prl$.

For $u\colon C\to C'$ a morphism in $\prl$ and $D\in \prl$, then the right adjoint to the composition 
\[\RFun(C^{op},D)\cong C\otimes D \stackrel{u\otimes id_D}{\to} C'\otimes D \cong \RFun((C')^{op},D)\] 
is easily identified with the morphism of pre-composition with $u^{op}$. 
Similarly, the right adjoint to the composition 
\[\RFun(D^{op},C)\cong D\otimes C \stackrel{id_D\otimes u}{\to} D\otimes C' \cong \RFun(D^{op},C')\]
is homotopic to the morphism of post-composition with the right adjoint to $u$.  

If $C$ is a presentable $\infty$-category, then we denote by $C^{\omega}$ the full subcategory of compact objects in $C$. We have a fully faithful embedding $\ind(C^{\omega})\into C$. The $\infty$-category $C$ is called compactly generated if this embedding is essentially surjective, and hence $C\cong \ind(C^{w})$. If $C$ and $D$ are $\infty$-categories we denote by $\Fun^{lex}(C,D)$ the full subcategory of $\Fun(C,D)$ spanned by the functors that commutes with finite limits.  

\begin{proposition}
	\label{proposition: compactly generated then restriction equivalence}
	Let $C$ be a compactly generated presentable $\infty$-category and let $D$ be a presentable $\infty$-category. Then restriction to the compact objects induces an equivalence 
	\[
	C\otimes D \cong \Fun^{lex}((C^{w})^{op},D)
	\]
\end{proposition}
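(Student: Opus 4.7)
The plan is to combine the identification $C \otimes D \simeq \RFun(C^{op}, D)$ recalled earlier in the section with the universal property of the pro-completion applied to $C^{\omega}$.

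First, since $C$ is compactly generated one has $C \simeq \ind(C^{\omega})$, and passing to opposite $\infty$-categories gives $C^{op} \simeq \pro((C^{\omega})^{op})$. The subcategory $C^{\omega} \subseteq C$ is closed under finite colimits, so $(C^{\omega})^{op}$ admits finite limits, and the canonical fully faithful embedding
\[ j \colon (C^{\omega})^{op} \into \pro((C^{\omega})^{op}) \simeq C^{op} \]
preserves them.

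Next, I would invoke the universal property of the pro-completion (the dual of \cite[Proposition 5.3.5.10]{LurieHTT}): for any $\infty$-category $D$ admitting small limits, restriction along $j$ induces an equivalence
\[
\RFun(\pro((C^{\omega})^{op}), D) \xrightarrow{\sim} \Fun^{lex}((C^{\omega})^{op}, D).
\]
Splicing this together with
\[
C \otimes D \simeq \RFun(C^{op}, D) \simeq \RFun(\pro((C^{\omega})^{op}), D)
\]
yields the claimed equivalence, and inspection shows that the resulting composite is precisely restriction along $j$, i.e.\ restriction to compact objects.

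The main technical obstacle is the careful verification of the universal property of $\pro$ in the $\infty$-categorical setting. Given a finite-limit-preserving functor $G \colon (C^{\omega})^{op} \to D$, the candidate extension $\tilde{G} \colon \pro((C^{\omega})^{op}) \to D$ is the right Kan extension along $j$ (explicitly, $\tilde{G}(p) = \varprojlim_i G(F_i)$ for a pro-representation $p = \{F_i\}$). One then has to check that $\tilde{G}$ preserves \emph{all} small limits, that restriction along $j$ recovers $G$, and that every limit-preserving functor out of $\pro((C^{\omega})^{op})$ arises this way. Formally this is dual to the standard universal property of $\ind$ for compactly generated $\infty$-categories, but stitching together the relevant adjunctions and identifying the composite with literal restriction to compact objects in $C \otimes D$ is where the work lies.
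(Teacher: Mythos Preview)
Your proposal is correct and follows essentially the same route as the paper: both chain together $C\otimes D \simeq \RFun(C^{op},D)$, the identification $C^{op}\simeq \pro((C^{\omega})^{op})$ coming from compact generation, and the universal property of $\pro$ to obtain $\Fun^{lex}((C^{\omega})^{op},D)$, then observe that the composite is restriction to compact objects. Your treatment is in fact slightly more careful than the paper's, which states the universal property of $\pro$ ``for presentable $\infty$-categories $A$ and $B$'' and then applies it to $A=(C^{\omega})^{op}$, which is small with finite limits rather than presentable; your formulation via the dual of \cite[Proposition~5.3.5.10]{LurieHTT} is the correct one.
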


\begin{proof}
	Recall that for presentable $\infty$-categories $A$ and $B$, restriction along the inclusion $A\to \pro A$ induces an equivalence $\RFun(\pro(A),B)\cong \Fun^{lex}(A,B)$. Hence, we have 
	\begin{align*}
	C\otimes D\cong \RFun(C^{op},D)\cong \RFun((\ind(C^{\omega}))^{op},D)\cong \\ 
	\cong \RFun(\pro ((C^{\omega})^{op}),D)\cong \Fun^{lex}((C^{\omega})^{op},D) 
	\end{align*}
	and the composition of those equivalences easily seen to be the one induced from the restriction along the inclusion $C^{\omega}\into C$. 
\end{proof}


We shall now discuss Beck-Chevalley conditions for the exterior tensor product of morphisms in $\prl$. For this we need the following lemma.

\begin{lemma}
	\label{lemma: Beck Chevalle for post compostition}
	Let $F\colon C\adj C' \colon G$ be an adjunction and let $H\colon D \to D'$ be a functor. Then post-composition with the adjunction datum of $F$ and $G$ give rise to an adjunction 
	\[
	F \circ \colon \Fun(D,C) \adj \Fun(D,C') \colon  G \circ  
	.\] 
	Moreover, the square 
	\[
	\xymatrix{
		\Fun(D',C) \ar^{ F \circ}[d] \ar^{\circ H}[r] & \Fun(D,C) \ar^{ F \circ }[d] \\ 
		\Fun(D',C') \ar^{\circ H}[r] & \Fun(D,C')
	}
	\] 
	satisfies the right Beck-Chevalley condition, and the square  
	\[
	\xymatrix{
		\Fun(D',C') \ar^{\circ H}[r] \ar^{G \circ}[d] & \Fun(D,C') \ar^{G \circ}[d]\\
		\Fun(D',C)  \ar^{\circ H}[r] & \Fun(D,C) \\
	}
	\] 
	satisfies the left Beck-Chevalley condition.
\end{lemma}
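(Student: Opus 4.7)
The plan is to reduce both parts of the lemma to the pointwise behavior of the adjunction $F \dashv G$, exploiting the fact that $\Fun(D,-)$ is a $2$-functor on $Cat_\infty$ and that $\Fun(-,E)$ is its contravariant counterpart.

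First I would construct the post-composition adjunction by applying the $2$-functor $\Fun(D,-)$ to the adjunction $F \dashv G$ in $Cat_\infty$. This yields an adjunction $F \circ \,\dashv\, G \circ$ on $\Fun(D,C)\rightleftarrows \Fun(D,C')$, whose unit and counit are the whiskerings with $id_D$ of the unit and counit of $F \dashv G$, and whose triangle identities reduce pointwise to those of $F \dashv G$. Equivalently, one can verify pointwise that for $\phi \in \Fun(D,C)$ and $\psi \in \Fun(D,C')$ the mapping space $\Hom(F\circ\phi,\psi)$ is computed objectwise via $F \dashv G$ and assembles into $\Hom(\phi,G\circ\psi)$.

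For the right Beck-Chevalley condition on the first square, the key observation is that pre-composition with $H$ and post-composition with any functor strictly commute as operations on functors: for $\phi \colon D' \to C$ we have $F \circ (\phi \circ H) = (F \circ \phi) \circ H$, and similarly for $G$. Consequently, both legs of the square $\Fun(D',C') \to \Fun(D,C)$ obtained by composing $(G \circ)$ with $(\circ H)$ send $\psi$ to $G \circ \psi \circ H$. The Beck-Chevalley transformation $(\circ H)(G \circ) \to (G \circ)(\circ H)$ is built from the unit and counit of the post-composition adjunction; under the pointwise identification above, its value at $\psi$ and at $d \in D$ is the zig-zag composition $G\psi(Hd) \to GFG\psi(Hd) \to G\psi(Hd)$ associated with the adjunction $F \dashv G$ applied to the object $\psi(Hd)$, hence an equivalence by the triangle identity. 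The second square is handled in the same way with the roles of unit and counit interchanged, yielding the left BC condition.

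The only real obstacle is confirming that the abstractly defined Beck-Chevalley map really coincides with this pointwise zig-zag at the $\infty$-categorical level. I would resolve this by invoking that $2$-functors preserve adjunction data together with their unit/counit, so the BC map attached to a square obtained by whiskering a $2$-cell with a fixed $1$-cell (here $\circ H$) inherits precisely the triangle-identity structure. If a more concrete verification is preferred, one can unpack everything via the coCartesian-fibration model of $\Fun$, following the discussion preceding Proposition \ref{theorem: BC map then has left adjoint} and the proof of Proposition 5.1 of \cite{AdjDescent}, and check that the edges appearing in the comparison are all equivalences coming from $F \dashv G$ applied objectwise.
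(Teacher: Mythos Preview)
Your proposal is correct and follows essentially the same approach as the paper: both unwind the Beck--Chevalley map explicitly as the composite $G(\psi H) \to GFG(\psi H) \to (G\psi)H$ and observe that this is an equivalence by the triangle (zigzag) identity for $F\dashv G$. The paper keeps the computation at the level of functors while you evaluate at a point $d\in D$, and your concluding paragraph about verifying the abstract BC map agrees with the pointwise formula is a bit more cautious than the paper (which simply writes the formula down), but the substance is the same.
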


\begin{proof}
	It is clear that $F\circ$ and $G\circ$ are adjoint by post-composing with the adjunction datum. We shall prove the Beck-Chevalley condition for the first square, as the second is completely analogous.
	Let $c\colon FG\to id_{C'}$ and $u\colon id_{C}\to GF$ denote the unit and the counit of the adjunction $F\dashv G$. 
	The Beck-Chevalley map of the first square has value on $\phi \colon D' \to D'$ is the composition 
	\[G(\phi H)  \stackrel{G u (\phi H) }{\to} (GF)G(\phi H)  \cong (G((FG)\phi)) H   \stackrel{G(c \phi) H}{\to} (G \phi) H.\]
	By the zygzag identities and the associativity of composition this map is homotopic to the associativity isomorphism $G (\phi H)\cong (G \phi) H $ and hence it is an equivalence. 
\end{proof}

Here again, the Beck-Chevalle map of one square is the homotopy rendering the second commutative and vice versa. 

\begin{proposition}
	\label{proposition: Beck-Chevalley tensor products} 
	Let $u\colon C \to C'$ and $v:D\to D'$ be morphisms in $\prl$. Assume that $C$ and $C'$ are compactly generated and that $u$ maps compact objects to compact objects. Suppose further that $v$ is left exact. Then the square 
	\[
	\xymatrix{
		C\otimes D \ar^{u\otimes id_D}[r]\ar_{id_C \otimes v}[d] & C' \otimes D \ar^{id_{C'} \otimes v}[d] \\ 
		C \otimes D' \ar^{u\otimes id_{D'}}[r]  & C'\otimes D'}
	\]
	satisfies the right Beck-Chevalley condition. 
\end{proposition}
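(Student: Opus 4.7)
The plan is to reduce the desired Beck--Chevalley condition to an instance of Lemma~\ref{lemma: Beck Chevalle for post compostition} via the compactly generated description of the tensor product.

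By Proposition~\ref{proposition: compactly generated then restriction equivalence}, each corner identifies with a category of lex presheaves; for instance $C\otimes D\cong\Fun^{lex}((C^\omega)^{op},D)$. Under this identification, the vertical map $id_C\otimes v$ is post-composition with $v$, which lands in $\Fun^{lex}$ precisely because $v$ is left exact, and its right adjoint $id_C\otimes v^R$ is post-composition with $v^R$ (still preserving lex-ness, as $v^R$ is a right adjoint). The paper's general description of right adjoints to tensored morphisms likewise shows that the right adjoint of $u\otimes id_D$ corresponds to pre-composition with $(u^\omega)^{op}\colon((C')^\omega)^{op}\to(C^\omega)^{op}$: the hypothesis that $u$ preserves compact objects ensures that $u^\omega\colon C^\omega\to(C')^\omega$ is defined, and since $u\in\prl$ preserves colimits, $u^\omega$ preserves finite colimits, so $(u^\omega)^{op}$ is left exact and pre-composition with it preserves $\Fun^{lex}$.

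I then apply Lemma~\ref{lemma: Beck Chevalle for post compostition} to the adjunction $v\dashv v^R$ and the functor $H:=(u^\omega)^{op}$, and restrict to lex subcategories --- a valid restriction because $v\circ$, $v^R\circ$, and $\circ H$ all preserve left-exactness. The lemma yields the right Beck--Chevalley property for the resulting square; under the identifications above, this is exactly the ``mate square'' of the original square in the proposition, obtained by replacing each horizontal map with its right adjoint.

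The final step is to transfer the right BC property from the mate square back to the original. The mate correspondence associated with the horizontal adjunctions $u\otimes id_D\dashv(u\otimes id_D)^R$ and $u\otimes id_{D'}\dashv(u\otimes id_{D'})^R$ identifies the Beck--Chevalley natural transformation of the original square with that of the mate square, so invertibility of one is equivalent to invertibility of the other. I expect the main obstacle to lie precisely here: one must unpack the definitions of the two BC natural transformations and verify, using the triangle identities for the horizontal adjunctions, that they are indeed mates of one another. Once this identification is established, the right BC property for the mate square already proved yields the right BC property for the original square, completing the proof.
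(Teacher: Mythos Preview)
Your approach is essentially the paper's, but the final ``transfer'' step is where the two diverge, and your justification there does not go through as stated.

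The paper does not form your ``horizontal mate square'' (vertical maps unchanged, horizontal maps replaced by right adjoints). Instead it passes to the \emph{full} square of right adjoints (all four maps replaced) and shows that square satisfies the \emph{left} BC condition. Concretely, under the identifications $C\otimes D\simeq\Fun^{lex}((C^\omega)^{op},D)$ the full adjoint square has horizontal maps $\circ(u^\omega)^{op}$ and vertical maps $R_v\circ$, and its left BC map is the canonical transformation $(v\circ)(\circ H)\to(\circ H)(v\circ)$, which is invertible by Lemma~\ref{lemma: Beck Chevalle for post compostition} (second square) after the restriction argument via Lemma~\ref{lemma: two-sided restriction of adjunctions}. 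The passage ``right BC of original $\Leftrightarrow$ left BC of full adjoint square'' is the standard fact that a natural transformation between right-adjointable functors is invertible iff its conjugate is; the paper takes this for granted.

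Your transfer step is different: you prove right BC of the horizontal mate square and then assert that the mate correspondence under the horizontal adjunctions identifies its BC map with that of the original. But compute the types: the original right BC map lies in $\Hom(\alpha R_\beta,\,R_{\beta'}\alpha')$, and its mate under $\alpha\dashv R_\alpha$, $\alpha'\dashv R_{\alpha'}$ lies in $\Hom(R_\beta R_{\alpha'},\,R_\alpha R_{\beta'})$. The mate square's right BC map, however, lies in $\Hom(R_\alpha R_{\beta'},\,R_\beta R_{\alpha'})$ --- the opposite direction. So the two are not mates of one another, and in any case the mate correspondence is a bijection of $2$-cells that does \emph{not} preserve invertibility (indeed, the very phenomenon of a commuting square failing BC is an invertible $2$-cell whose mate is not invertible). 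Hence invertibility of the mate square's BC map does not, by this route, yield invertibility of the original one.

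The fix is minor: replace your horizontal mate square by the full adjoint square and invoke the left-BC half of Lemma~\ref{lemma: Beck Chevalle for post compostition}; the rest of your argument (compactly generated identification, restriction to $\Fun^{lex}$) is exactly what the paper does.
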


\begin{proof}
	Let $R_v$ be the right adjoint to $v$. 
	It would suffice to prove that the square of right adjoints satisfies the left Beck-Chevalley condition. The square of right adjoints is canonically equivalent to the square 
	\[
	\xymatrix{
		\RFun((C')^{op},D') \ar^{u^{op}\circ}[r]\ar^{\circ R_v}[d]& \RFun((C)^{op},D')\ar^{\circ R_v}[d] \\ 
		\RFun((C')^{op},D) \ar^{u^{op}\circ}[r] & \RFun(C^{op},D) \\ 
	}
	.\]
	
	By Proposition \ref{proposition: compactly generated then restriction equivalence}, this square is equivalent to the square 
	\begin{equation}
	\label{euqation: square left exact functors diagram}
	\xymatrix{
		\Fun^{lex}(({C'}^{\omega})^{op},D') \ar^{u^{op}\circ}[r]\ar^{\circ R_v}[d]& \Fun^{lex}((C^{\omega})^{op},D')\ar^{\circ R_v}[d] \\ 
		\Fun^{lex}(({C'}^{\omega})^{op},D) \ar^{u^{op}\circ}[r] & \Fun^{lex}((C^{\omega})^{op},D) \\ }
	.\end{equation}
	
	The adjunction $v\colon D\adj D' \colon R_v$ gives rise to an adjunction between functors $Cat_\infty\to Cat_\infty$ of the form
	\[v \circ \colon \Fun(\bullet,D) \adj \Fun(\bullet, D')\colon   R_v \circ.\]
	Since $v$ is left exact the restriction of $v\circ $ to $\Fun^{lex}(\bullet, D)$ has its image in $\Fun^{lex}(\bullet, D')$, and hence by Lemma \ref{lemma: two-sided restriction of adjunctions} we get a restricted adjunction 
	\[v \circ \colon \Fun^{lex}(\bullet,D) \adj \Fun^{lex}(\bullet, D')\colon \circ R_v,\] 
	and the square 
	\[
	\xymatrix{
		\Fun^{lex}(\bullet, D) \ar^{\circ R_v}[d]\ar@{^{(}->}[r]& \Fun(\bullet,D)\ar^{\circ R_v}[d] \\ 
		\Fun^{lex}(\bullet,D')\ar@{^{(}->}[r] & \Fun(\bullet, D')
	}
	\]
	satisfies the left Beck-Chevalley condition. Hence, the left Beck-Chevalley condition of the square (\ref{euqation: square left exact functors diagram}) follows from the left Beck-Chevalley condition for the square 
	\[
	\xymatrix{
		\Fun(({C'}^{\omega})^{op},D') \ar^{u^{op}\circ}[r]\ar^{\circ R_v}[d]& \Fun((C^{\omega})^{op},D')\ar^{\circ R_v}[d] \\ 
		\Fun(({C'}^{\omega})^{op},D) \ar^{u^{op}\circ}[r] & \Fun((C^{\omega})^{op},D) \\ }
	\]
	(compare the proof of Theorem \ref{theorem: inverse limits Beck Chevalley descent}). This square satisfies the left Beck-Chevalley condition by Lemma \ref{lemma: Beck Chevalle for post compostition}. 
\end{proof}
As a corollary, we get
\begin{corollary}
	\label{corollary: compactly generated then BC for topoi}
	Let $f\colon\m{T}\to \m{U}$ be a geometric morphism of $\infty$-topoi and let $u\colon C\adj D \colon v$ be a morphism in $\prl$. Assume that $C$ and $D$ are compactly generated and that $u$ preserves compact objects. Then then the commutative square  
	\[\xymatrix{
		\m{U} \otimes C \ar^{id_\m{U}\otimes u}[r] & \m{U} \otimes D  \\ 
		\m{T} \otimes C \ar^{id_\m{T}\otimes u}[r]\ar^{f^*\otimes id_C}[u] & \m{T} \otimes D\ar_{f^*\otimes id_D}[u]
	}
	\]
	satisfies the right Beck-Chevalley condition.
\end{corollary}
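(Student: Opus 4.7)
The plan is to deduce this corollary as a direct application of Proposition \ref{proposition: Beck-Chevalley tensor products}, using the symmetric monoidal structure on $\prl$ to rearrange the tensor factors so that the square matches the form assumed by that proposition.

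First, I would apply the symmetry braiding $X\otimes Y\simeq Y\otimes X$ in $\prl$ to rewrite the given square in the equivalent form in which the horizontal morphisms become $u\otimes id$ (with $u$ in the first tensor slot) and the vertical morphisms become $id\otimes f^*$ (with the geometric-morphism part in the second slot). Because the braiding is a natural equivalence in $\prl$, and because the formation of right adjoints commutes with this natural equivalence, the right Beck-Chevalley map of the rewritten square corresponds to that of the original square; in particular, one satisfies the right BC-condition if and only if the other does.

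Next, I would check the three hypotheses of Proposition \ref{proposition: Beck-Chevalley tensor products} applied to the rewritten square. The categories $C$ and $D$ are compactly generated by assumption, and $u$ preserves compact objects by assumption. The key remaining input is that the vertical functor $f^*$ is left exact, which holds simply because, by definition, the inverse image functor of a geometric morphism preserves finite limits. Proposition \ref{proposition: Beck-Chevalley tensor products} then yields the right BC-condition for the rewritten square, hence for the original square.

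There is no substantive obstacle here: all the real work was done in Proposition \ref{proposition: Beck-Chevalley tensor products}, and this corollary is essentially a repackaging tailored to the case where the ``$id\otimes v$'' factor arises from a geometric morphism rather than an abstract left-exact functor. The only mildly delicate bookkeeping is ensuring that the braiding genuinely transports the BC-condition, but this is formal from naturality of the symmetry in $\prl$ and its compatibility with passage to right adjoints.
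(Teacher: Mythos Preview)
Your proposal is correct and matches the paper's intended argument. The paper gives no explicit proof of this corollary; it is stated as an immediate consequence of Proposition~\ref{proposition: Beck-Chevalley tensor products}, and your use of the symmetry in $\prl$ to swap the tensor factors, together with the observation that $f^*$ is left exact by definition of a geometric morphism, is precisely the intended deduction.
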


\subsubsection{Comparison of the Relative Homological type and Homotopy Type}

We shall now discuss the compatibility of the relative homological and homotopical types for geometric morphisms. 
\begin{proposition}
	For every morphism of $\infty$-topoi $f\colon\m{T}\to \m{U}$, the diagram
	
	\[
	\xymatrix{
		\pro(\m{U}) \ar^{f_\sharp}[d] \ar^{I_\Lambda(\m{U})}[rr] \ar[d] && \pro(\Sh{\m{U},\Lambda}) \ar^{f_\sharp}[d]  \\ 
		\pro(\m{T}) \ar^{I_\Lambda(\m{T})}[rr]                    && \pro(\Sh{\m{T},\Lambda})
	}
	\]
	commutes up to homotopy. 
\end{proposition}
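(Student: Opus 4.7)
The plan is to pass to right adjoints and reduce the claim to a commutativity at the non-pro level. Both $f_\sharp$ and the pro-extension $I_\Lambda(\m{T})$ are left adjoints: the former by construction, and the latter as $\pro(I_\Lambda)$ for the original left adjoint $I_\Lambda$ on non-pro categories. Since $\pro(-)$ preserves adjunctions (by a direct computation with the pro-Hom formula, one checks that $\pro(F)\dashv \pro(G)$ whenever $F\dashv G$), the right adjoint of $I_\Lambda(\m{T})$ is $\pro(R_\Lambda)$, where $R_\Lambda\colon \Sh{\m{T},\Lambda}\to \m{T}$ is the forgetful functor. By the uniqueness of adjoints, the desired square of left adjoints commutes if and only if the corresponding square of right adjoints commutes, namely
\[
\pro(R_\Lambda)\circ \pro(f^*)\cong \pro(f^*)\circ \pro(R_\Lambda)\colon \pro(\Sh{\m{U},\Lambda})\longrightarrow \pro(\m{T}).
\]
By the functoriality of $\pro(-)$ in $Cat_\infty$, this further reduces to the analogous equivalence at the non-pro level,
\[
R_\Lambda\circ f^*\cong f^*\circ R_\Lambda\colon \Sh{\m{U},\Lambda}\longrightarrow \m{T}.
\]

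For this last equivalence we use the construction of the sheaves of $\Lambda$-modules via the symmetric monoidal structure on $\prl$. Writing $\m{V}\cong \m{V}\otimes \m{S}$ and $\Sh{\m{V},\Lambda}\cong \m{V}\otimes D(\Lambda)$ for $\m{V}\in\{\m{T},\m{U}\}$, the functor $f^*$ is in each case the exterior tensor $f^*\otimes \mathrm{id}$ with the appropriate second factor, while under the $\RFun(\m{V}^{op},-)$ presentation of these tensor products the forgetful $R_\Lambda$ is simply post-composition with $R_\Lambda^0\colon D(\Lambda)\to \m{S}$. The equivalence is then a direct consequence of bifunctoriality: post-composition with a fixed morphism of $Cat_\infty$ commutes with the $\prl$-tensor-promotion of any morphism, which is nothing but the naturality of the underlying adjunction $I_\Lambda^0\dashv R_\Lambda^0$ in the topos variable.

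The main subtlety is the correct identification of the right adjoint of the pro-extension $I_\Lambda(\m{T})$ with $\pro(R_\Lambda)$; once the preservation of adjunctions by $\pro(-)$ is recorded, the remainder of the argument is purely formal, combining uniqueness of adjoints with the $\pro$-functoriality of composition.
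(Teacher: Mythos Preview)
Your overall strategy coincides with the paper's: pass to right adjoints, use that $\pro(-)$ preserves adjunctions to identify the right adjoint of $\pro(I_\Lambda)$ with $\pro(R_\Lambda)$, and reduce to the non-pro square
\[
R_\Lambda \circ f^* \;\simeq\; f^* \circ R_\Lambda \colon \Sh{\m{U},\Lambda}\longrightarrow \m{T}.
\]
The difficulty is entirely in this last step, and here your justification is incomplete.

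You assert that the commutativity follows from ``bifunctoriality'' or ``naturality of $I_\Lambda^0\dashv R_\Lambda^0$ in the topos variable''. But naturality of $I_\Lambda$ only gives $I_\Lambda \circ f^* \simeq f^* \circ I_\Lambda$; passing to right adjoints of the $I_\Lambda$'s yields a Beck--Chevalley transformation $f^* R_\Lambda \to R_\Lambda f^*$, and whether this is an equivalence is a genuine condition, not a formality. Concretely, in the model $\m{V}\otimes D \simeq \RFun(\m{V}^{op},D)$ you correctly identify $R_\Lambda$ with post-composition by $R_\Lambda^0$, but $f^*\otimes id$ is \emph{not} a pre- or post-composition: its right adjoint $f_*$ is pre-composition with $(f^*)^{op}$, so $f^*$ itself is a left Kan-type construction, and commutation with post-composition by $R_\Lambda^0$ is not automatic. (Equivalently: $R_\Lambda^0$ does not preserve colimits, so $id\otimes R_\Lambda^0$ is not a morphism in $\prl$, and there is no ``bifunctoriality of $\otimes$'' to invoke.) What you do get for free is $R_\Lambda f_* \simeq f_* R_\Lambda$, which is just the adjoint form of naturality of $I_\Lambda$ and not what is needed.

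The paper supplies exactly the missing ingredient: it shows (Proposition~\ref{proposition: Beck-Chevalley tensor products} and Corollary~\ref{corollary: compactly generated then BC for topoi}) that the Beck--Chevalley map for this square is invertible, using that $\m{S}$ and $D(\Lambda)$ are compactly generated and that $I_\Lambda^0\colon \m{S}\to D(\Lambda)$ preserves compact objects, together with the left-exactness of $f^*$. These hypotheses let one pass from $\RFun$ to $\Fun^{lex}$ on compact objects, where the required commutation becomes the elementary fact that pre- and post-composition commute (Lemma~\ref{lemma: Beck Chevalle for post compostition}). Your write-up should either invoke this Beck--Chevalley result explicitly or reproduce its proof; as it stands, the appeal to naturality does not close the gap.
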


\begin{proof}
	
	Passing to the right adjoints, it would suffice to prove that the diagram
	\[
	\xymatrix{
		\pro(\m{U})   && \pro(\Sh{\m{U},\Lambda}) \ar^{R_\Lambda(\m{U})}[ll] \\ 
		\pro(\m{T}) \ar^{f^*}[u]        && \pro(\Sh{\m{T},\Lambda}) \ar^{f^*}[u] \ar^{R_\Lambda(\m{T})}[ll]
	}
	\]
	commutes up to homotopy. This diagram is the prolongation of 
	
	\[
	\xymatrix{
		\m{U}   && \Sh{\m{U},\Lambda} \ar^{R_\Lambda(\m{U})}[ll] \\ 
		\m{T} \ar^{f^*}[u]        && \Sh{\m{T},\Lambda} \ar^{f^*}[u] \ar^{R_\Lambda(\m{T})}[ll]
	}
	\]
	hence it suffices to prove that this square commutes up to homotopy.  
	
	The square
	\[
	\xymatrix{
		\m{U} \ar^{I_\Lambda(\m{U})}[d]  &  \m{T} \ar^{f^*}[l] \ar^{I_\Lambda(\m{T})}[d]  \\ 
		\Sh{\m{U},\Lambda}  & \Sh{\m{T},\Lambda} \ar^{f^*}[l] 
	}
	\]
	commutes by the naturality of $I_\Lambda$ and it would suffice to show that it satisfies the right Beck-Chevalley condition. This in turn follows from Corollary \ref{corollary: compactly generated then BC for topoi} since $I_\Lambda\colon D(\Lambda)\to \m{S}$ map compact objects to compact objects and $\mathcal{S}$ and $D(\Lambda)$ are compactly generated. 
\end{proof}

If we denote $\Lambda \otimes x=I_\Lambda(x)$ then the following 

\begin{corollary}
	\label{corollary: shrieck commutes with tensor}
	Let $f:\mathcal{T}\to \mathcal{U}$ be a geometric morphism, and let $\Lambda$ bea ring. Then 
	\[\Lambda \otimes f_\sharp(x)\cong f_\sharp(\Lambda \otimes x).\]   
	In particular, 
	\[|f|_\Lambda \cong |f|\otimes \Lambda.\]
\end{corollary}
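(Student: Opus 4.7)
The plan is to observe that this corollary is essentially a direct unpacking of the preceding proposition, combined with the definitions of $|f|$, $|f|_\Lambda$, and $\Lambda_\m{T}$. No new ideas are required; the bulk of the work was already done in establishing the commutativity of the square relating $I_\Lambda$ and $f_\sharp$.

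First I would observe that by definition $\Lambda\otimes x = I_\Lambda(x)$ (under the convention introduced just before the statement), so the first claim
\[\Lambda \otimes f_\sharp(x)\cong f_\sharp(\Lambda \otimes x)\]
is literally the assertion that $I_\Lambda \circ f_\sharp \cong f_\sharp \circ I_\Lambda$ as functors $\pro(\m{T})\to \pro(\Sh{\m{U},\Lambda})$ (with appropriate indexing of $I_\Lambda$ for each topos). This is exactly the content of the commutative square in the preceding proposition, so the first part is immediate once one identifies notation.

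For the ``In particular'' clause, I would simply chase the definitions: by definition $|f| = f_\sharp(\ast_\m{T})$ and $|f|_\Lambda = f_\sharp(\Lambda_\m{T}) = f_\sharp(I_\Lambda(\ast_\m{T}))$. Applying the first part of the corollary to $x = \ast_\m{T}$ yields
\[ |f|_\Lambda = f_\sharp(I_\Lambda(\ast_\m{T})) \cong I_\Lambda(f_\sharp(\ast_\m{T})) = \Lambda\otimes |f|, \]
which is the desired identification (up to the symmetry of the tensor product).

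There is no real obstacle here; the only point that requires a bit of care is that $I_\Lambda$ really denotes a natural transformation of functors out of $\mathfrak{Top}_\infty^{op}$, so there is a separate instance of $I_\Lambda$ for $\m{T}$ and for $\m{U}$, and one must invoke naturality (already encoded in the proposition) to pass between them. Once that is clear, the corollary is a one-line consequence of the preceding result.
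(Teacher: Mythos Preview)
Your proposal is correct and matches the paper's approach exactly: the paper gives no separate proof for this corollary, treating it as an immediate consequence of the preceding proposition together with the notational convention $\Lambda\otimes x = I_\Lambda(x)$ and the definitions $|f|=f_\sharp(\ast_\m{T})$, $|f|_\Lambda=f_\sharp(\Lambda_\m{T})$. Your unpacking of these identifications is precisely what the reader is expected to supply.
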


\subsubsection{Cohomological Remark}

We conclude with a remark regarding various cohomological invariants. If $\m{T}$ is an $\infty$-topos and $x\in \m{T}$ we can define the cohomology of $x$ in a priori two different ways. On one hand, we can set 
\[H^n(x,\Lambda)=\pi_0 \Hom_{\Sh{\m{T},\Lambda}}(x\otimes \Lambda,\Lambda_\m{T}[n]),\] 
which is analogous to the definition using singular cochains of a space.
On the other hand, we can define the cohomology using maps to Eilenberg-Mclane spaces,
by 
\[H^n(x,\Lambda)=\Hom_\m{T}(x,\Gamma_\m{T}^*K(\Lambda,n))\] 
where $\Gamma_\m{T}\colon\m{S}\to \m{T}$ is the unique geometric morphism.
However, since \[\Gamma_{\m{T}}^*K(\Lambda,n)=\Gamma_{\m{T}}^*R_\Lambda(\Lambda[n])\cong R_\Lambda \Gamma_{\m{T}}^*\Lambda[n]\] the two definitions agree by the adjunction between $I_\Lambda$ and $R_\Lambda$.
Moreover, if $f\colon\m{T}\to \m{U}$ is a geometric morphism then for $x\in \m{T}$ we have 
\begin{align*}
&H^n(f_\sharp (x),\Lambda)=\Hom_{\m{U}}(f_\sharp x,\Gamma_{\m{U}}^*K(\Lambda,n))\cong  \Hom_{\m{T}}(x,f^*\Gamma_{\m{U}}^*K(\Lambda,n))\cong\\ &\cong \Hom_{\m{T}}(x,\Gamma_{\m{T}}^*K(\Lambda,n))=H^n(x,\Lambda)    
\end{align*}
so the functor $f_\sharp$ preserves the cohomology. 

\subsection{Smooth Base Change for Higher Stacks}

If $X$ is a scheme of finite type over $K$, then $h\Sh{X,\Lambda}$ is the derived category of sheaves of $\Lambda$-modules over $X$. In particular, every statement on $\Sh{X,\Lambda}$ that can be tested on the homotopy category can be approached using classical sheaf theory and $\etale$ cohomology. The most important for us is the smooth base-change theorem. 

\begin{proposition}[Smooth Base-Change for Schemes]
	\label{prop: smooth bc for schemes}
	Let $\Lambda$ be a finite ring of size prime to $char(K)$. 
	Let 
	\[
	\xymatrix{
		X\ar^{g'}[r]\ar^{f'}[d] & Y\ar^{f}[d] \\
		Z\ar^{g}[r] & W
	}
	\]
	be a pullback diagram of schemes of finite type over $K$, such that $f$ (and hence $f'$) is smooth. Then the diagram 
	\[
	\xymatrix{
		\Sh{X,\Lambda}\ar^{g'_*}[r]\ar^{f'_*}[d] & \Sh{Y,\Lambda}\ar^{f_*}[d] \\
		\Sh{Z,\Lambda}\ar^{g_*}[r] & \Sh{W,\Lambda}
	}
	\]
	satisfies the left BC-condition. Namely, the Beck-Chevalley map for this square, denoted $BC_*\colon f^*g_* \to g'_* (f')^*$, is an equivalence. 
\end{proposition}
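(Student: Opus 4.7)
The plan is to bootstrap this $\infty$-categorical Beck-Chevalley statement from the classical smooth base-change theorem in \'etale cohomology, which reduces the work to a translation between the two formalisms.

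First, I would unwind what the claim says on the level of the homotopy category. For a scheme $X$ of finite type over $K$, the stable $\infty$-category $\Sh{X,\Lambda}$ has homotopy category canonically equivalent to the classical (unbounded) derived category of \'etale sheaves of $\Lambda$-modules on $X$. Under this identification the $\infty$-categorical pullback $f^*$ descends to the classical (exact) pullback on abelian sheaves, and $f_*$ descends to $Rf_*$; consequently the Beck-Chevalley map $BC_*\colon f^* g_* \to g'_* (f')^*$ of the proposition descends to the classical base-change morphism $f^* Rg_* \to Rg'_* (f')^*$.

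Since each $\Sh{X,\Lambda}$ is stable, a morphism is an equivalence if and only if its image in the homotopy category is an isomorphism. Therefore it suffices to verify that the classical base-change morphism is an isomorphism in the derived category of $Y$. Under the hypotheses that $f$ is smooth and $|\Lambda|$ is coprime to $\text{char}(K)$, this is precisely the statement of the classical smooth base-change theorem (SGA 4, Expos\'e XVI, Th\'eor\`eme 1.1, or Milne's \emph{\'Etale Cohomology}, VI.4.1), originally established for a single torsion sheaf and extended to bounded complexes by a standard hyperbivariant spectral-sequence argument.

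The main obstacle, and the only step requiring work beyond invoking SGA, is extending the classical statement from bounded complexes (where it is typically formulated) to the full unbounded $\infty$-category $\Sh{X,\Lambda}$. This can be handled by using that for morphisms of finite-type schemes over $K$ with torsion coefficients $\Lambda$ of order prime to $\text{char}(K)$, the functor $Rg_*$ has bounded cohomological amplitude (the \'etale cohomological dimension being controlled by the relative dimension). Hence both sides of the base-change map commute with Postnikov truncations and cofiltered limits, and the unbounded statement assembles from the bounded case verified by SGA.
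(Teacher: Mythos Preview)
Your proposal is correct and matches the paper's approach: the paper does not supply a formal proof block for this proposition, but the paragraph immediately preceding it explains that $h\Sh{X,\Lambda}$ is the classical derived category and that ``every statement on $\Sh{X,\Lambda}$ that can be tested on the homotopy category can be approached using classical sheaf theory and \'etale cohomology,'' after which the proposition is stated as an instance of this principle. Your write-up makes this reduction explicit and, in addition, addresses the passage from bounded to unbounded complexes via finite cohomological dimension---a point the paper leaves implicit.
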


\begin{remark}
	Note that this is the same as the right Beck-Chevalley condition for the square 
	\[
	\xymatrix{
		\Sh{X,\Lambda} & \Sh{Y,\Lambda} \ar^{{g'}^*}[l] \\
		\Sh{Z,\Lambda}\ar^{{f'}^*}[u] & \Sh{W,\Lambda}. \ar^{f^*}[u]\ar^{g^*}[l]
	}
	\]
\end{remark}

After prolongation, all the functors $f^*,g_*, g'_*, (f')^*$ admit left adjoints and $BC_*$ induces a natural transofrmation 
$BC_\sharp\colon f'_\sharp {g'}^*\to g^* f_\sharp$. This map is easily identified with the left Beck-Chevalley map of the square
\begin{equation}
\label{equation: square of smooth basechange diagram}
\xymatrix{
	\ProSh{X,\Lambda} & \ProSh{Y,\Lambda}\ar^{{g'}^*}[l] \\
	\ProSh{Z,\Lambda}\ar^{{f'}^*}[u] & \ProSh{W,\Lambda}\ar^{f^*}[u]\ar^{g^*}[l]
}
\end{equation}

As an immediate result, we get the following
\begin{corollary}
	\label{corollary: BC sharp equivalence for smooth of schemes}
	Let \[
	\xymatrix{
		X\ar^{g'}[r]\ar^{f'}[d] & Y\ar^{f}[d] \\
		Z\ar^{g}[r] & W
	}
	\]  
	be a pullback diagram of schemes of finite type over $K$ with $f$ smooth, and $\Lambda$ be a finite ring of order prime to the characteristic of $K$. Then the left Beck-Chevalley map 
	$BC_\sharp \colon f'_\sharp {g'}^*\to g^* f_\sharp$ 
	associated with the square (\ref{equation: square of smooth basechange diagram})
	is an equivalence. 
\end{corollary}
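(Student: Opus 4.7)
The plan is to deduce the prolonged statement from the sheaf-level smooth base change of Proposition \ref{prop: smooth bc for schemes} by a Yoneda argument in the pro-category combined with a mate correspondence. Since $\Sh{Z,\Lambda}$ cogenerates $\ProSh{Z,\Lambda}$, the map $BC_\sharp(y)\colon f'_\sharp {g'}^* y \to g^* f_\sharp y$ is an equivalence in $\ProSh{Z,\Lambda}$ for every $y \in \ProSh{Y,\Lambda}$ if and only if, for every such $y$ and every $z \in \Sh{Z,\Lambda}$, the induced map of mapping spaces
\[
\Hom_{\ProSh{Z,\Lambda}}(g^*f_\sharp y, z) \to \Hom_{\ProSh{Z,\Lambda}}(f'_\sharp {g'}^* y, z)
\]
is an equivalence of spaces. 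So it suffices to verify this latter condition.

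Using the four pro-category adjunctions $f_\sharp \dashv f^*$, $g^* \dashv g_*$, $f'_\sharp \dashv {f'}^*$, and ${g'}^* \dashv g'_*$, both sides can be rewritten as
\[
\Hom_{\ProSh{Y,\Lambda}}(y, f^*g_* z) \quad \text{and} \quad \Hom_{\ProSh{Y,\Lambda}}(y, g'_* {f'}^* z),
\]
and the displayed comparison map becomes postcomposition with a natural map $\alpha(z)\colon f^*g_* z \to g'_*{f'}^* z$. A direct unwinding of the definition of $BC_\sharp$ as the mate of the commutativity isomorphism ${f'}^* g^* \simeq {g'}^* f^*$ with respect to the vertical left adjoints identifies $\alpha$ with the mate of the same commutativity isomorphism with respect to the horizontal right adjoints, which is precisely the smooth base change map $BC_*\colon f^* g_* \to g'_* {f'}^*$ of Proposition \ref{prop: smooth bc for schemes}.

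Finally, since $f$ is smooth and $\Lambda$ is a finite ring whose order is prime to the characteristic of $K$, Proposition \ref{prop: smooth bc for schemes} ensures that $\alpha(z) = BC_*(z)$ is an equivalence in $\Sh{Y,\Lambda}$ for every $z \in \Sh{Z,\Lambda}$, and hence also in $\ProSh{Y,\Lambda}$ through the fully faithful embedding $\Sh{Y,\Lambda} \hookrightarrow \ProSh{Y,\Lambda}$. Postcomposition with an equivalence is an equivalence of mapping spaces, so the comparison map above is an equivalence, as required.

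The main obstacle in this plan is the mate-correspondence identification in the middle step: one must carefully verify that the two mates of the same commutativity isomorphism — one via the vertical left adjoints (yielding $BC_\sharp$) and one via the horizontal right adjoints (yielding $BC_*$) — correspond under the Yoneda pairing. This is a standard double-mate computation along the lines of the arguments in Section \ref{subsection: Abstract Nonsense}, but it requires bookkeeping with the units and counits of all four adjunctions simultaneously.
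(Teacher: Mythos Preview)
Your proof is correct and is essentially a detailed unpacking of what the paper leaves implicit. The paper simply observes, in the paragraph preceding the corollary, that after prolongation $BC_*$ ``induces'' the transformation $BC_\sharp$ (this is precisely your double-mate relationship), and then declares the corollary an ``immediate result'' of Proposition~\ref{prop: smooth bc for schemes}; your Yoneda argument with cogenerating objects $z\in\Sh{Z,\Lambda}$ and the identification of the induced map with postcomposition by $BC_*(z)$ is exactly the content hidden in that word ``immediate''.
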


We shall now show that the conclusion of Corollary $\ref{corollary: BC sharp equivalence for smooth of schemes}$ holds for stacks as well.

\begin{definition}
	\label{def: smooth map of stacks}
	Let $f\colon\mathfrak{X}\to \mathfrak{Y}$ be a morphism of stacks. We say that $f$ is smooth if for every morphism $Y\to \mathfrak{Y}$ for a scheme $Y$ of finite type over $K$, the pull-back map $f'\colon Y\times_\mathfrak{Y}\mathfrak{X}\to Y$ is a smooth morphism of schemes. \end{definition}

\begin{theorem} [Smooth base-change for $\infty$-stacks]
	\label{theorem: Smooth Basechange for stacks}
	Let 
	\[
	\xymatrix{
		\mathfrak{X}\ar^{g'}[r]\ar^{f'}[d] & \mathfrak{Y}\ar^{f}[d] \\
		\mathfrak{Z}\ar^{g}[r] & \mathfrak{W}
	}
	\]   
	be a pullback square of stacks, with $f$ smooth and $g$ schematic. Let $\Lambda$ be a finite ring of order prime to $char(K)$. 
	Then the two squares  
	\[
	\xymatrix{
		\Sh{\mathfrak{X},\Lambda}\ar^{g'_*}[r]\ar^{f'_*}[d] & \Sh{\mathfrak{Y},\Lambda}\ar^{f_*}[d] \\
		\Sh{\mathfrak{Z},\Lambda}\ar^{g_*}[r] & \Sh{\mathfrak{W},\Lambda}
	}
	\]
	and 
	\[
	\xymatrix{
		\ProSh{\mathfrak{X},\Lambda} & \ProSh{\mathfrak{Y},\Lambda}\ar^{{g'}^*}[l] \\
		\ProSh{\mathfrak{Z},\Lambda}\ar^{{f'}^*}[u] & \ProSh{\mathfrak{W},\Lambda}\ar^{f^*}[u]\ar^{g^*}[l]
	}
	\]
	
	satisfy the left BC-condition. 
\end{theorem}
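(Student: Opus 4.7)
The plan is to reduce both Beck-Chevalley conditions to the scheme-level smooth base change (Proposition \ref{prop: smooth bc for schemes} and Corollary \ref{corollary: BC sharp equivalence for smooth of schemes}) by presenting every corner of the square as a colimit of schemes, and then invoke the abstract descent machinery of Theorem \ref{theorem: inverse limits Beck Chevalley descent}. The argument splits into (i) proving the sheaf-level square satisfies the left BC-condition, and (ii) deducing the pro-sheaf version from (i) by prolongation and mating, exactly as in the passage from Proposition \ref{prop: smooth bc for schemes} to Corollary \ref{corollary: BC sharp equivalence for smooth of schemes}.

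For (i), I first use Yoneda density to write $\mathfrak{W}=\colim_I W_i$ as a colimit of schemes of finite type over $K$. Since $g\colon \mathfrak{Z}\to \mathfrak{W}$ is schematic, pulling back gives $\mathfrak{Z}=\colim_I Z_i$ with each $Z_i=W_i\times_{\mathfrak{W}}\mathfrak{Z}$ a scheme. Pulling back along $f$ gives $\mathfrak{Y}=\colim_I Y_i$, where the $Y_i$ may still be stacks, but each $Y_i\to W_i$ is smooth. I then iterate the presentation: write each $Y_i$ itself as a colimit of schemes over $W_i$, and set $X_{ij}=Y_{ij}\times_{W_i}Z_i$, which is a scheme since all three factors are. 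The combined indexing produces a small $\infty$-category $J$ and a morphism of $J$-diagrams of scheme squares whose termwise left BC-condition is exactly the content of Proposition \ref{prop: smooth bc for schemes}, since each square is a pullback with smooth vertical and schematic horizontal arrows. Moreover, the internal horizontal and vertical transition maps of these diagrams are themselves obtained by pulling back $f$ (or $g$), so the internal BC-conditions needed as hypotheses of Theorem \ref{theorem: inverse limits Beck Chevalley descent} again reduce to Proposition \ref{prop: smooth bc for schemes}. Since $\Sh{\bullet,\Lambda}$ sends colimits of stacks to limits in $Cat_\infty$, Theorem \ref{theorem: inverse limits Beck Chevalley descent} then yields the left BC-condition for the limit square, which is the desired statement on $\Sh{\bullet,\Lambda}$.

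For (ii), I apply $\pro$ to the conclusion of (i). After prolongation the functors $f^*$ and ${f'}^*$ acquire left adjoints $f_\sharp$ and $f'_\sharp$ by Proposition \ref{prop: left adjoint exist}, and the left BC map $BC_\sharp\colon f'_\sharp\,{g'}^* \to g^*\, f_\sharp$ of the pro-sheaf square is the standard mate of the sheaf-level BC map $BC_*\colon f^*\,g_* \to g'_*\,(f')^*$ established in (i), following the identification made for schemes just before Corollary \ref{corollary: BC sharp equivalence for smooth of schemes}. Since $BC_*$ is an equivalence, so is its mate $BC_\sharp$, giving the left BC-condition for the second square.

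The main obstacle will be step (i), specifically the bookkeeping of the iterated colimit presentation: one must check that every auxiliary square produced by iterating Yoneda density is again a pullback with a smooth vertical and a morphism of schemes horizontal, and that all transition morphisms in the combined $J$-diagram satisfy the hypotheses of Theorem \ref{theorem: inverse limits Beck Chevalley descent}. Once this scaffolding is correctly set up, the descent formalism of Subsection \ref{subsection: Abstract Nonsense} does the rest automatically.
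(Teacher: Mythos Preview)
Your overall strategy matches the paper's: present the square as a colimit of scheme squares, apply Theorem \ref{theorem: inverse limits Beck Chevalley descent} to descend the scheme-level smooth base change, and then deduce the pro-sheaf statement from the sheaf one by passing to left adjoints. Part (ii) is exactly what the paper does.

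However, in part (i) you have misread Definition \ref{def: smooth map of stacks}. That definition requires that for \emph{every} scheme $W_i\to \mathfrak{W}$ the pullback $Y_i=W_i\times_{\mathfrak{W}}\mathfrak{Y}\to W_i$ be a smooth morphism \emph{of schemes}; in particular each $Y_i$ is already a scheme. Thus a single colimit presentation of $\mathfrak{W}$ by schemes, pulled back to the other three corners, immediately yields an $I$-diagram of pullback squares of schemes with smooth vertical legs, and one invokes Theorem \ref{theorem: inverse limits Beck Chevalley descent} directly. Your iterated presentation is unnecessary.

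More importantly, if the iteration were genuinely needed (i.e.\ if the $Y_i$ could be stacks), your sketch would have a gap: writing $Y_i=\colim_j Y_{ij}$ as a colimit of schemes gives maps $Y_{ij}\to W_i$ that are merely the composite of an arbitrary presentation map $Y_{ij}\to Y_i$ with the smooth $Y_i\to W_i$, and there is no reason this composite is smooth. So Proposition \ref{prop: smooth bc for schemes} would not apply to the resulting termwise squares, and the hypotheses of Theorem \ref{theorem: inverse limits Beck Chevalley descent} would fail. Fortunately this issue evaporates once you use the actual definition of ``smooth'' in the paper.
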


\begin{proof}
	We shall prove for the first square, as the second square follows from the first. 
	Let $I$ be a small infinity category and $W_\bullet \stackrel{\rho_\mathfrak{W}}\to \mathfrak{W}$ an $I^{\triangleright}$-shaped diagram exhibiting $\mathfrak{W}$ as a colimit of schemes of finite type. Pulling back this presentation to $\mathfrak{X}, \mathfrak{Y}$ and $\mathfrak{Z}$ along the morphisms in the pullback square, and using the fact that colimits in a topos are universal, we get a commutative cube of $I$-shaped diagrams in which all the faces are pulback squares and the front face consist of constant diagrams  
	\[
	\xymatrix{
		X_\bullet\ar^{g'_\bullet}[rr]\ar^{\rho^{\bullet}_{\mathfrak{X}}}[rd]\ar^{f'_\bullet}[dd] & & Y_\bullet \ar^{\rho^{\bullet}_{\mathfrak{Y}}}[rd]  \ar^(.6){f_\bullet}[dd]  & \\
		& \mathfrak{X} \ar^(.4){g'}[rr]\ar_(.3){f'}[dd] & & \mathfrak{Y}\ar^{f}[dd] & \\
		Z_\bullet\ar^(.6){g_\bullet}[rr]\ar^{\rho^{\bullet}_{\mathfrak{Z}}}[rd] & & W_\bullet \ar^{\rho^{\bullet}_{\mathfrak{W}}}[rd]&  \\ 
		& \mathfrak{Z}\ar^{g}[rr] & & \mathfrak{W}
	}
	\]
	
	Applying $\Sh{-,\Lambda}$ to the back-face we get a commutative square of $I^{op}$-shaped diagrams 
	\[\xymatrix{
		\Sh{X_\bullet, \Lambda} & \Sh{Y_\bullet, \Lambda} \ar^{{g'_\bullet}^*}[l] \\ 
		\Sh{Z_\bullet, \Lambda}\ar^{{f'_\bullet}^*}[u] & \Sh{W_\bullet, \Lambda}\ar^{f^*_\bullet}[u] \ar^{g^*_\bullet}[l] \\ 
	}
	\]
	with limit the square
	\[\xymatrix{
		\Sh{\mathfrak{X}, \Lambda} & \Sh{\mathfrak{Y}, \Lambda} \ar^{{g'}^*}[l] \\ 
		\Sh{\mathfrak{Z}, \Lambda}\ar^{{f'}^*}[u] & \Sh{\mathfrak{W}, \Lambda}\ar^{f^*}[u] \ar^{g^*}[l] \\ 
	}
	\]
	
	Hence, in view of Theorem \ref{theorem: inverse limits Beck Chevalley descent}, it would suffice to show that the maps $f_\bullet^*$ and ${f'_\bullet}^*$ satisfy the right Beck-Chevalley condition, and that for each $i\in I$ the square 
	
	\[\xymatrix{
		\Sh{X_i, \Lambda} & \Sh{Y_i, \Lambda} \ar^{{g'_i}^*}[l] \\ 
		\Sh{Z_i, \Lambda}\ar^{{f'_i}^*}[u] & \Sh{W_i, \Lambda}\ar^{f^*_i}[u] \ar^{g^*_i}[l] \\ 
	}
	\]
	satisfies the right Beck-Chevalley condition. Both the statements follows from the Smooth Base-change Theorem for schemes. 
\end{proof}

As an immediate consequence, we get
\begin{corollary}
	\label{corollary: relative homotopy smooth basechange}
	Let 
	\[
	\xymatrix{
		\mathfrak{X}\ar^{g'}[r]\ar^{f'}[d] & \mathfrak{Y}\ar^{f}[d] \\
		\mathfrak{Z}\ar^{g}[r] & \mathfrak{W}
	}
	\]   
	be a pullback square of stacks, with $f$ smooth and $g$ schematic. Let $\Lambda$ be a finite ring of order prime to $char(K)$. Then 
	\[
	g^*|f|_\Lambda  \cong |f'|_\Lambda
	\]
\end{corollary}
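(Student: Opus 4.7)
The plan is to deduce the corollary directly from the pro-categorical square of \Tref{theorem: Smooth Basechange for stacks}. That theorem guarantees that the left Beck--Chevalley map $BC_\sharp\colon f'_\sharp {g'}^* \to g^* f_\sharp$ is a natural equivalence of functors $\ProSh{\mathfrak{Y},\Lambda}\to \ProSh{\mathfrak{Z},\Lambda}$, and the corollary will be obtained by evaluating $BC_\sharp$ at the canonical sheaf $\Lambda_{\mathfrak{Y}}\in \Sh{\mathfrak{Y},\Lambda}\subseteq \ProSh{\mathfrak{Y},\Lambda}$.

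The one input that remains is the identification ${g'}^*\Lambda_{\mathfrak{Y}}\cong \Lambda_{\mathfrak{X}}$ inside $\Sh{\mathfrak{X},\Lambda}$. By construction $\Lambda_{\mathfrak{Y}}=I_\Lambda(*_{\mathfrak{Y}})$, and the functor $I_\Lambda$ was built in Subsection \ref{relToSheavesOfComplexes} as a natural transformation in $\prl$ (by tensoring the diagram $\Psi$ with the morphism $\m{S}\to D(\Lambda)$), so it intertwines with the inverse images of geometric morphisms. In particular ${g'}^* I_\Lambda(*_{\mathfrak{Y}})\cong I_\Lambda({g'}^* *_{\mathfrak{Y}})$. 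Since the inverse image ${g'}^*\colon \Sh{\mathfrak{Y}}\to \Sh{\mathfrak{X}}$ is left exact, it preserves the terminal object, giving ${g'}^* *_{\mathfrak{Y}}\cong *_{\mathfrak{X}}$ and hence the desired ${g'}^*\Lambda_{\mathfrak{Y}}\cong \Lambda_{\mathfrak{X}}$.

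Putting the pieces together yields
\[
|f'|_\Lambda \;=\; f'_\sharp \Lambda_{\mathfrak{X}} \;\cong\; f'_\sharp {g'}^*\Lambda_{\mathfrak{Y}} \;\overset{BC_\sharp}{\cong}\; g^* f_\sharp \Lambda_{\mathfrak{Y}} \;=\; g^* |f|_\Lambda,
\]
which is the asserted equivalence. There is no real obstacle here: all of the geometric content has already been absorbed in \Tref{theorem: Smooth Basechange for stacks}, and this corollary is simply the specialization of the resulting pro-categorical BC-equivalence to the unit sheaf at the terminal object, together with the naturality of the free-module functor $I_\Lambda$ with respect to geometric morphisms.
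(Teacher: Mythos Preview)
Your proposal is correct and is precisely the argument the paper has in mind: the corollary is stated in the paper as ``an immediate consequence'' of \Tref{theorem: Smooth Basechange for stacks} with no further proof, and your write-up spells out exactly that immediate deduction---evaluate the left Beck--Chevalley equivalence $BC_\sharp\colon f'_\sharp {g'}^* \xrightarrow{\sim} g^* f_\sharp$ at $\Lambda_{\mathfrak{Y}}$ and use ${g'}^*\Lambda_{\mathfrak{Y}}\cong \Lambda_{\mathfrak{X}}$ via the naturality of $I_\Lambda$.
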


\section{Higher Obstruction Theory}
\label{RelativeObstruction}

Let $f\colon X\to Y$ be a morphism of schemes. If $f$ has a section, then the relative \etale 
topological type $\Et{(f)}$ has a global section. 
Thus, obstructions to global sections of $\Et{(f)}$ give rise to obstructions for sections of $f$. 

We will present obstruction theory in the context of higher topoi, 
starting with obstruction to a global section inside a given topos and
then extending it to the case of an obstruction for the existence of a section for a morphism of topoi.
We describe the formalism of obstruction theory using
the theory of gerbes,
this will make it easy to understand how the obstruction behaves via pullback.

\begin{remark}
	Throughout this section we only state theorems for dimensions $\ge 2$, 
	as the theory of gerbes requires some refinement to work for lower dimension.
	This is a due to the need for $\pi_n$ to be an abelian group. 
	
	We shall mension, however, that for the homological obstruactions the theory naturally extend to those cases as well.
\end{remark}

\subsection{Obstruction Theory for Global Sections of Sheaves in \texorpdfstring{$\infty$}{infinity}-topoi}

Let $\m{T}$ be an $\infty$-topos, and let $t$ be an object of $\m{T}$.
We ask whether $t$ has a global section, i.e. whether the terminal map 
$t\to *_\m{T}$ has a section.
Let's consider the Postnikov tower (as in \cite[def 5.5.6.23]{LurieHTT}) of $t$:
\[ 
t \to \Big[ \cdots\to P_{\le n}t\to\cdots\to P_{\le -1}t \Big]
\]
from which we see that if there is a global section for $t$ then 
there is a global section for each $P_{\le n}t$.
As in the usual construction in topological obstruction theory, 
we assume that we have found a global section $\sigma_{n-1}\colon  *\to P_{\le n-1}t$
and we will give an obstruction for the existence of a compatible global section $\sigma_n$ 
making the following diagram commutative:
\[
\xymatrix{
	&P_{\le n}t \ar[d]\\
	\ast \ar^{\sigma_n}@{-->}[ru] \ar^{\sigma_{n-1}}[r]  &P_{\le n-1}t.
}
\]

We do this as follows:
Let $\m{G}_n$ be the pullback of the above diagram. 
The existence of a lift as above is equivalent to the existence of a section
for the left downward arrow:
\[
\xymatrix{
	\m{G}_n \ar[r] \ar[d] &P_{\le n}t \ar[d] \\
	\ast \ar@/^/@{-->}[u] \ar[r] &P_{\le n-1}t.
}
\]

The pullback $\m{G}_n$ is an $n$-gerbe banded over $\pi_n t$,
as defined in \cite[7.2.2.20]{LurieHTT}.
The picture of $\m{G}_n$ to keep in mind is 
as a sheaf which is locally an Eilenberg-MacLane sheaf,
with homotopy group $\pi_n t$ at level $n$. 
Readers familiar with obstruction theory of topological spaces
should be quick to recognize the above generalization.

By \cite[7.2.2.28]{LurieHTT}, for $n\ge 2$ there is an equivalence between the set of 
equivalence classes of $n$-gerbes banded over a discrete abelian group object $A$
and the cohomology group $\HH^{n+1}_\m{T}(A)$. 
If $\m{G}$ is an $n$-gerbe banded by $A$,
we denote by $[\m{G}]$ it's corresponding cohomology class. 

The association of a gerbe to a cohomology class can be described as follows.
A class $\sigma\in\HH^{n+1}_\m{T}(A)$ is defined by a map 
\[
\ast_\m{T}\xrightarrow{\sigma} K(A,n+1). 
\]
The fiber of the above map is the corresponding $n$-gerbe.
Conversely, any gerbe can be fit uniquely as such a fiber
\[
\xymatrix{
	\m{G} \ar[r]\ar[d]          &   \ast_\m{T}     \ar^0[d] \\
	\ast_\m{T} \ar^(0.33)\sigma[r]    &   K(A,n+1). 
}
\]
\begin{definition}
	\label{cocycleObstruction}
	Let $n\ge 2$, let $t \in \m{T}$ be an element of an $\infty$-topoi, and let $\sigma_{n-1} \colon  *_\m{T} \to P_{n-1}(t)$ 
	be a section of the $n-1$-st Postnikov filtration of $t$. 
	Let $\m{G}_n$ denote the pull-back 
	$P_n \times_{P_{n-1}} *_\m{T}$. 
	Then we define the \textit{$n+1$-st obstruction class for extending $\sigma_{n-1}$}, denoted by $o_{n+1}(t,\sigma_{n-1})$, as the class $[\m{G}_n]$ in 
	$\HH^{n+1}_\m{T}(\pi_n(t))$.
\end{definition}

\begin{warning}
	Note the numbering conventions:
	\begin{itemize}
		\item The $n+1$-st homotopy obstruction corresponds to a $n+1$-th cohomology class.
		\item The $n$ gerbe is $n$-truncated and $n$-connective.
		\item The gerbe is a pullback over a global section of the $n-1$ postnikov truncation.
	\end{itemize}
\end{warning}

\begin{remark}
	This construction of obstruction theory is consistent with the classical obstruction theory 
	for lifting problems from classical topology. 
\end{remark}

\begin{remark}
	We shall always consider a gerbe banded by an abelian group object of the topos of discrete sheaves, $A\in Disc(\m{T})$, as 
	a pair $(\m{G},\phi)$ such that $\m{G}$ is an $n$-gerbe and 
	$\phi \colon  \pi_n(\m{G}) \stackrel{\sim}{\to} A$ is an isomorphism.
\end{remark}

\subsection{Functoriality Properties of the Obstructions}

In this sub-section we wish to show that the obstruction  
$o_{n+1}(t,\sigma_{n-1})$ is functorial in $t$, and also in $\m{T}$ w.r.t. geometric morphisms. 

Let $A,B$ be two abelian group objects in $Disc(\m{T})$. 
Let $f\colon  A \to B$ be a homomorphism. Then we get an induced map 
$f_* \colon  \HH^k_\m{T}(A) \to \HH^k_\m{T}(B)$.  
This is evident from the description of the cohomology as spaces of maps 
to Eilenberg-Maclane objects,
but we would like to describe this map in the language of gerbes. 

\begin{proposition}
	\label{functoriality of cohomology}
	Let $n\ge 2$.
	Let $(\m{G}, \phi)$ be an $n$-gerbe banded by $A$ and $(\m{G}',\phi')$ an $n$-gerbe banded by 
	$A'$. Let $f\colon  A \to A'$ be a map of group objects. Then 
	there is a map $g\colon  \m{G} \to \m{G}'$ inducing $f$ on $\pi_n$, if and only if 
	$f_*[\m{G}] = [\m{G}']$ in $\HH^{n+1}_\m{T}(A')$. 
\end{proposition}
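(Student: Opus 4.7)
The plan is to exploit the classification from \cite[7.2.2.28]{LurieHTT}: every $n$-gerbe $\m{G}$ banded by $A$ arises as the fiber of a classifying map $\sigma_\m{G}\colon \ast_\m{T}\to K(A, n+1)$, and the assignment $[\m{G}]\mapsto [\sigma_\m{G}]$ is a bijection between equivalence classes of $A$-banded gerbes and $\HH^{n+1}_\m{T}(A)=\pi_0 \Hom_\m{T}(\ast_\m{T}, K(A, n+1))$. Under this dictionary the map $f_*\colon \HH^{n+1}_\m{T}(A)\to \HH^{n+1}_\m{T}(A')$ is post-composition with $K(f, n+1)\colon K(A, n+1)\to K(A', n+1)$. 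Accordingly I define $f_*\m{G}$ as the fiber of $K(f, n+1)\circ \sigma_\m{G}$; this is an $A'$-banded $n$-gerbe of class $f_*[\m{G}]$. The homotopy-commutative square with top row $\sigma_\m{G}$, bottom row $K(f, n+1)\circ \sigma_\m{G}$, identity as left vertical and $K(f, n+1)$ as right vertical yields, by taking horizontal fibers, a canonical morphism $\m{G}\to f_*\m{G}$ which induces $f$ on $\pi_n$, as can be verified locally where both gerbes trivialize to $K(A, n)$ and $K(A', n)$.

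For the $(\Leftarrow)$ direction, if $f_*[\m{G}] = [\m{G}']$ then $K(f, n+1)\circ \sigma_\m{G}$ and $\sigma_{\m{G}'}$ are homotopic as maps $\ast_\m{T}\to K(A', n+1)$, so their fibers give an equivalence $f_*\m{G}\simeq \m{G}'$ of $A'$-banded gerbes, and the composition $\m{G}\to f_*\m{G}\simeq \m{G}'$ is the desired $g$. For the $(\Rightarrow)$ direction, given $g\colon \m{G}\to \m{G}'$ inducing $f$ on $\pi_n$, the goal is to construct an equivalence $f_*\m{G}\simeq \m{G}'$ of $A'$-banded gerbes, from which $[\m{G}']=[f_*\m{G}] = f_*[\m{G}]$ follows via the classification. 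The strategy is to factor $g$ through the canonical $\m{G}\to f_*\m{G}$, producing a morphism $\bar g\colon f_*\m{G}\to \m{G}'$ of $A'$-banded gerbes which is the identity on $\pi_n=A'$. Such a $\bar g$ is automatically an equivalence by a Whitehead-type principle for gerbes in an $\infty$-topos: $f_*\m{G}$ and $\m{G}'$ each have a single non-trivial homotopy sheaf, namely $A'$ at level $n$, and a morphism between them inducing the identity there is an equivalence.

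The main obstacle is producing the factorization $\bar g$, equivalently constructing a homotopy between the classifying maps $K(f, n+1)\circ \sigma_\m{G}$ and $\sigma_{\m{G}'}$ from the datum of $g$. Conceptually, what is needed is the universal property that $f_*\m{G}$ is the initial $A'$-banded $n$-gerbe receiving a morphism from $\m{G}$ compatible with $f$ on $\pi_n$. This is the functorial content of \cite[7.2.2.28]{LurieHTT}: the correspondence $\m{G}\mapsto \sigma_\m{G}$ extends from $\pi_0$ to an equivalence of mapping spaces, so morphisms of $A$-banded gerbes correspond to homotopies between classifying maps, and morphisms compatible with a banding change $f$ correspond to homotopies between the pushed-forward classifying maps. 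Extracting this refined input cleanly from Lurie's classification --- rather than just its $\pi_0$-level statement --- is the crux of the argument.
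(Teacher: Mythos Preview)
Your $(\Leftarrow)$ direction is essentially the paper's: build the map on fibers from a commuting triangle of classifying maps, and read off the effect on $\pi_n$ from the long exact sequences.

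For $(\Rightarrow)$ you correctly isolate the crux --- producing the factorization $\bar g\colon f_*\m{G}\to \m{G}'$, equivalently a homotopy $K(f,n+1)\circ \sigma_{\m{G}}\simeq \sigma_{\m{G}'}$ from the datum of $g$ --- but you do not actually supply it. The appeal to ``the functorial content of \cite[7.2.2.28]{LurieHTT}'' does not close the gap as stated: Lurie's classification identifies the \emph{groupoid} of $A'$-banded $n$-gerbes with the mapping space $\Hom_{\m{T}}(\ast_{\m{T}},K(A',n+1))$, so morphisms on the gerbe side are \emph{equivalences} of $A'$-banded gerbes, and these correspond to paths between classifying maps. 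A morphism $g\colon \m{G}\to \m{G}'$ inducing a non-invertible $f$ on $\pi_n$ is not a morphism in that groupoid, and naturality of the classification in the band does not by itself convert $g$ into a path in $\Hom_{\m{T}}(\ast_{\m{T}},K(A',n+1))$. What you really need is the universal property ``$f_*\m{G}$ is initial among $A'$-banded $n$-gerbes under $\m{G}$ compatibly with $f$'' (equivalently, the identification of $f_*\m{G}$ with the associated torsor $\m{G}\times^{K(A,n)}K(A',n)$), and that is an additional argument, not a formality extracted from the space-level classification of gerbes with a \emph{fixed} band.

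The paper avoids this unstable difficulty by \emph{linearizing}. It replaces $\m{G}$ and $\m{G}'$ by $C=P_{\le n}C_*(\m{G},\ZZ)$ and $C'=P_{\le n}C_*(\m{G}',\ZZ)$, so that the augmented fiber sequences $K\to C\to \ZZ$ and $K'\to C'\to \ZZ$ live in a stable $\infty$-category. There the map $C\to C'$ induced by $g$ automatically extends to a morphism of exact triangles, and rotating yields a commuting square
\[
\xymatrix{
\ZZ \ar@{=}[d]\ar^{\alpha}[r] & A[n+1] \ar^{f[n+1]}[d] \\
\ZZ \ar^{\alpha'}[r] & A'[n+1]
}
\]
with the Hurewicz isomorphism identifying the fibers. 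The remaining step --- that $\alpha,\alpha'$ correspond under $R_\Lambda$ to the classifying sections $\sigma,\sigma'$ --- is exactly the content deferred to Proposition~\ref{EquivalenceOfHomologicalObstructions}. So the paper trades your unstable factorization problem for a routine stable one; your route is viable, but the missing universal property is genuine work, whereas stabilization makes the analogous step free.
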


\begin{proof}
	If $f_*[\m{G}] = [\m{G}']$ then we have a commutative diagram 
	
	\[
	\xymatrix{
		& \ar_{\sigma}[dl]\ast_\m{T} \ar^{\sigma'}[dr] & \\
		K(A,n+1)\ar^{K(f,n+1)}[rr] &            & K(A',n+1)
	}\]
	such that the fiber of $\sigma$ is $\m{G}$ and the fiber of $\sigma'$ is $\m{G}'$. 
	Then we have an induced map on the fibers which clearly induce the map $f$ on $\pi_n$, 
	by comparing the long exact sequences of homotopy groups associated to
	the morphism $K(f,n+1)$. 
	
	We shall prove the converse by passing to the associated complexes. 
	Let $C = P_{\le n}C_*(\m{G},\ZZ)$ and $C' = P_{\le n}C_*(\m{G}',\ZZ)$.
	The map $g\colon  \m{G} \to \m{G}'$ induces a map 
	$P_{\le n}C_*(g) \colon  C \to C'$, commuting with the augmentations. 
	In particular, we get a morphism of exact triangles 
	\[
	\xymatrix{
		K \ar[d]\ar[r]& C \ar[d]\ar[r] & \ZZ \ar@{=}[d]\\ 
		K' \ar[r]& C' \ar[r] & \ZZ
	}
	\]
	where $K$ and $K'$ are the fibers of the augmentations of $C$ and $C'$ respectively. 
	The banding maps $\phi \colon  \pi_n \m{G} \to A$ and $\phi' \colon  \pi_n \m{G}' \to A'$ 
	induces equivalences 
	$K \cong A[n]$ and $K' \cong A'[n]$, via the Hurewicz isomorphism.
	
	Shifting the exact triangles above and identifying $K$ and $K'$ with $A[n]$ and $A'[n]$ 
	we get a commutative square, who's commutativity follows from the assumption that 
	$g\colon  \m{G} \to \m{G}'$ induces the map $f$ on homotopy groups:
	\[
	\xymatrix{
		\ZZ \ar@{=}[d]\ar^{\alpha}[r] & A[n+1] \ar^{f[n+1]}[d] \\ 
		\ZZ \ar^{\alpha'}[r]          & A'[n+1].
	}
	\] 
	
	It remains to show that the morphisms $\alpha$ and $\alpha'$ 
	correspond via the functor $M$ to the morphisms 
	$\sigma \colon  \ast_\m{T} \to K(A,n+1)$ and $\sigma' \colon  \ast_\m{T} \to K(A,n+1)$ 
	classifying the cocycles $[\m{G}]$ and $[\m{G}']$ and thus 
	we have $\sigma' = f \circ \sigma$ so $f_* [\m{G}] = [\m{G}']$.
	The proof of this correspondence is postponed to the proof of Proposition \ref{EquivalenceOfHomologicalObstructions}.
	
\end{proof}

We now use the preceding proposition to prove the functoriality of the obstruction class with respect to a morphism in $\m{T}$.

Let $f\colon  t \to t'$ be a morphism in an $\infty$-topos $\m{T}$. 
Consider an $n-1$ section 
\[\sigma_{n-1}(t) \colon  \ast_T \to P_{\le n-1}(t).\] 
Composing with $P_{\le n-1}(f)$ we get a section 
\[P_{\le n-1}(f)_* \sigma_{n-1}(t) \colon  \ast_T \to P_{\le n-1}(t').\]
Let us denote 
$\sigma_{n-1}(t') = P_{\le n-1}(f)_* \sigma_{n-1}(t)$. 
Note that the fiber $\m{G}_n$ of the map $P_{\le n}t \to P_{\le n-1} t$ 
is an $n$-gerbe. 
We get a commutative diagram 
of Cartesian squares

\[
\xymatrix{
	\m{G}_n(t) \ar[rd] \ar[dd] \ar[rr]    &                        & P_{\le n}(t) \ar[dd] \ar[rd] &     \\
	&\m{G}_{n}(t')  \ar[rr] \ar[dd]             &              & P_{\le n}(t') \ar[dd]     \\
	\ast_\m{T}\ar[rd] \ar[rr]  &                       & P_{\le n-1}(t) \ar[rd] &               \\
	&   \ast_{\m{T}} \ar[rr]        &              & P_{\le n-1}(t')     \\                                                       
}
\]
and in particular we get a map $\m{G}_n(t) \to \m{G}_n(t')$ and we have a commuative diagram 
\[\xymatrix{
	\pi_n \m{G}_n(t) \ar^{\sim}[r] \ar[d] & \ar[d] \pi_n t \\ 
	\pi_n \m{G}_n(t') \ar^{\sim}[r]& \pi_n t'
}
\]

\begin{proposition} 
	Let $f\colon  t \to t'$ be a morphism in an $\infty$-topos $\mathcal{T}$. 
	Then 
	\[
	f_* o_{n+1}(t,\sigma_{n-1}) = o_{n+1}(t',P_{\le n-1}(f)(\sigma_{n-1}))
	\]
\end{proposition}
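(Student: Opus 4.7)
The proof is essentially immediate from the setup in the paragraph preceding the proposition combined with the previous Proposition \ref{functoriality of cohomology}. My plan is the following.

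First, I would recall that by definition $o_{n+1}(t,\sigma_{n-1}) = [\m{G}_n(t)]$, where $\m{G}_n(t)$ is the pullback of $\ast_\m{T} \xrightarrow{\sigma_{n-1}} P_{\le n-1}(t) \leftarrow P_{\le n}(t)$, and this is an $n$-gerbe banded by $\pi_n(t)$ via the Hurewicz identification $\pi_n \m{G}_n(t) \cong \pi_n P_{\le n}(t) \cong \pi_n t$ coming from the long fiber sequence of the Postnikov tower. Similarly $o_{n+1}(t', P_{\le n-1}(f)\sigma_{n-1}) = [\m{G}_n(t')]$ as an $n$-gerbe banded by $\pi_n(t')$.

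Next, I would invoke the construction produced in the paragraph preceding the proposition: the commutative cube of Cartesian squares yields a canonical morphism of gerbes $g\colon \m{G}_n(t) \to \m{G}_n(t')$, together with the commutative square
\[
\xymatrix{
\pi_n \m{G}_n(t) \ar^{\sim}[r] \ar[d] & \pi_n t \ar^{\pi_n(f)}[d] \\
\pi_n \m{G}_n(t') \ar^{\sim}[r] & \pi_n t'
}
\]
identifying the map induced by $g$ on $\pi_n$ with $\pi_n(f)$ under the banding isomorphisms. This last identification is the one slightly nontrivial ingredient, and it follows from naturality of the Postnikov tower and the Hurewicz identification: the map $g$ was constructed as a map of fibers of $P_{\le n}(-) \to P_{\le n-1}(-)$ induced by $P_{\le n}(f)$ and $P_{\le n-1}(f)$, so naturality of the long exact sequence of homotopy groups forces the square above to commute.

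Finally, I apply Proposition \ref{functoriality of cohomology} to the map $\pi_n(f)\colon \pi_n t \to \pi_n t'$ and the two gerbes $(\m{G}_n(t),\phi)$ and $(\m{G}_n(t'),\phi')$: the existence of $g$ inducing $\pi_n(f)$ on the bandings forces $\pi_n(f)_* [\m{G}_n(t)] = [\m{G}_n(t')]$ in $\HH^{n+1}_\m{T}(\pi_n t')$, which unwinds to the desired equality $f_* o_{n+1}(t,\sigma_{n-1}) = o_{n+1}(t', P_{\le n-1}(f)\sigma_{n-1})$. The only point that requires any care is verifying that the $g$ from the cube genuinely induces $\pi_n(f)$ on bandings; everything else is a direct application of the results already proved.
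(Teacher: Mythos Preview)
Your proposal is correct and follows exactly the same approach as the paper's proof, which is simply the one-line observation that the result follows from Proposition~\ref{functoriality of cohomology} together with the existence of the map $\m{G}_n(t)\to\m{G}_n(t')$ constructed in the preceding paragraph. Your version is just a more detailed unpacking of that sentence, including the verification that the induced map on $\pi_n$ is $\pi_n(f)$, which the paper records in the commutative square immediately before the proposition.
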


\begin{proof}
	This follows from Proposition \ref{functoriality of cohomology} and the existence of the map 
	$\m{G}_n(t) \to \m{G}_n(t')$. 
\end{proof}

Going on to explain the functoriality of the obstruction with respect to a geometric morphism,
we shall first say a word about
the map on cohomology induced by that geometric morphism. 
Indeed, if $f\colon  \m{T}' \to \m{T}$ is a geometric morphism, and 
$\phi\colon  \ast_\m{T} \to K(A,n+1)$ represent some element in $\HH^{n+1}_\m{T}(A)$ then we have 
$f^*K(A,n+1) \cong K(f^*A,n+1)$ and $f^* \ast_\m{T} \cong \ast_{\m{T}'}$ so 
we can regard $f^*\phi$ as representing a class in $\HH^{n+1}_\m{T}(f^*A)$. 
This construction gives us a pull-back map on cohomology associated with a 
geometric morphism. 

The functoriality of the obstruction itself can now be checked easily.
Since $f^*$ preserve finite limits, on the level of gerbes we obviously 
have $f^*[\m{G}] = [f^*\m{G}]$ (just compare the canonical fibrations of the corresponding 
Eilenberg-Maclane objects). 
Since $f^*$ also preserve Postnikov filtrations, we deduce the following:

\begin{proposition}
	$f^*o_{n+1}(t,\sigma_{n-1}) = o_{n+1}(f^*t,f^*\sigma_{n-1})$. 
\end{proposition}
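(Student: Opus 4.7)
The plan is direct: both obstruction classes arise from the same pullback construction of Definition \ref{cocycleObstruction}, and one only needs to check that $f^*$ intertwines these constructions and then translate the comparison into cohomology.

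First, recall that $o_{n+1}(t,\sigma_{n-1})$ is the class of the $n$-gerbe
\[
\m{G}_n = P_{\le n}(t)\times_{P_{\le n-1}(t)}\ast_\m{T},
\]
banded by $\pi_n(t)$, where the pullback is taken along $\sigma_{n-1}$. The inverse image functor $f^*$ is left exact, preserves the terminal object ($f^*\ast_\m{T}\cong \ast_{\m{T}'}$), and preserves Postnikov truncations ($f^*P_{\le k}(t)\cong P_{\le k}(f^*t)$). Applying $f^*$ to the defining pullback square therefore produces a pullback square in $\m{T}'$ identifying $f^*\m{G}_n$ with $P_{\le n}(f^*t)\times_{P_{\le n-1}(f^*t)}\ast_{\m{T}'}$, where the bottom map is $f^*\sigma_{n-1}$. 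The band is preserved, since $\pi_n(f^*\m{G}_n)\cong f^*\pi_n(t)\cong \pi_n(f^*t)$ by exactness of $f^*$ on truncated objects. Hence, as banded $n$-gerbes, $f^*\m{G}_n$ coincides with the $n$-gerbe $\m{G}_n'$ whose class computes $o_{n+1}(f^*t,f^*\sigma_{n-1})$.

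It remains to verify that $f^*[\m{G}_n] = [f^*\m{G}_n]$ in $\HH^{n+1}_{\m{T}'}(f^*\pi_n(t))$. This is the observation recorded just before the proposition: the class $[\m{G}_n]$ is represented by a map $\phi\colon \ast_\m{T}\to K(\pi_n(t), n+1)$ whose fiber is $\m{G}_n$, and the pull-back map on cohomology is defined by sending $\phi$ to $f^*\phi\colon \ast_{\m{T}'}\to K(f^*\pi_n(t), n+1)$, using $f^*K(A,n+1)\cong K(f^*A,n+1)$ and $f^*\ast_\m{T}\cong \ast_{\m{T}'}$. Since $f^*$ preserves fibers, the fiber of $f^*\phi$ is $f^*\m{G}_n$, so $f^*[\m{G}_n] = [f^*\m{G}_n]$.

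Combining these identifications yields $f^*[\m{G}_n] = [f^*\m{G}_n] = [\m{G}_n'] = o_{n+1}(f^*t,f^*\sigma_{n-1})$, as desired. The only subtle point is ensuring that the canonical band on $f^*\m{G}_n$ obtained by applying $f^*$ agrees with the band on $\m{G}_n'$ used to define $o_{n+1}(f^*t,f^*\sigma_{n-1})$; but both arise from the same Hurewicz-type identification of $\pi_n$ of the gerbe with $\pi_n$ of the apex of the Postnikov tower, so their agreement follows formally from the compatibility of $f^*$ with Postnikov truncations and with the functor $\pi_n$ on $n$-truncated objects.
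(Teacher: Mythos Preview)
Your proof is correct and follows essentially the same approach as the paper: the paper's argument (given in the paragraph immediately preceding the proposition rather than in a separate proof environment) is simply that $f^*$ preserves finite limits, so $f^*[\m{G}] = [f^*\m{G}]$, and that $f^*$ preserves Postnikov filtrations. You have expanded these observations with more detail, in particular the explicit identification of the banding, which the paper leaves implicit.
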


\subsection{Obstruction Theory for a Geometric Morphism}

Let $f\colon  \m{T} \to \m{T}'$ be a morphism of $\infty$-topoi. 
The relative topological type for $f$ is defined as an object of 
of $\pro(\m{T}')$, given by $f_\sharp\ast_\m{T}$. 
It is easy to extend the definition of the obstruction for sections to pro-objects. 
Indeed, the postnikov tower of a pro-object is computed object-wise,  
so the fiber of the maps between consecutive filtra is a pro-gerbe, hence defines 
a cohomology class with coefficients in the corresponding pro-homotopy group
in the obvious way. Namely, if 
$\{\m{G}_i\}_{i \in I}$ is a system of gerbes banded over $\{A_i\}$ 
with compatibility maps between them, they corresponds 
to a compatible system of maps $\{\ast_\m{T} \to K(A_i,n+1)\}$ which is a pro-cohomology class 
by definition. 

\begin{definition}
	Let $f\colon  \m{T} \to \m{T}'$ be a geometric morphism of $\infty$-topoi. 
	Let $\sigma_{n-1} \colon  \ast_{\m{T}'} \to P_{\le n-1}(f_\sharp \ast_\m{T}).$
	Then we denote by  
	$o_{n+1}(f, \sigma_{n-1})$ the obstruction $o_{n+1}(f_\sharp \ast_\m{T}, \sigma_{n-1})$. 
\end{definition}

\begin{remark}
	The higher obstructions might depend on the chosen section, and the 
	possible options are governed by the differentials of the Bausfield-Kan spectral sequence associated 
	to the postnikov filtration of $t$.
	We will be mostly interested in $o_{n+1}(f,\sigma_{n-1})$ in cases where $f_\sharp \ast_{\m{T}}$ is 
	$n$-connected, and then the obstruction does not depend on the choice of section.
	In this case we shorten the notation and write $o_{n+1}(f)$, omitting the section 
	from the notation.
\end{remark}

\begin{proposition}
	As mentioned in the beginning of this section, if the morphism 
	$f\colon  \m{T} \to \m{T}'$ admits a section then all the obstructions vanish.
\end{proposition}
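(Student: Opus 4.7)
The plan is to show that a section of $f$, regarded as a geometric morphism, produces a global section of the relative homotopy type $f_\sharp *_\mathcal{T} \in \pro(\mathcal{T}')$. Once we have such a section, the vanishing of every obstruction is a formal consequence of the Postnikov construction.

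First I would construct the global section. Let $s \colon \mathcal{T}' \to \mathcal{T}$ be the given section, so $fs \simeq \mathrm{id}_{\mathcal{T}'}$ as geometric morphisms. Since $\sharp$ is the left adjoint of the pro-extension of $(-)^*$, and pro-extension is strictly functorial, we have $f_\sharp s_\sharp \simeq (fs)_\sharp \simeq \mathrm{id}_{\pro(\mathcal{T}')}$. Applying this to the terminal object gives an equivalence $f_\sharp s_\sharp *_{\mathcal{T}'} \simeq *_{\mathcal{T}'}$. Applying $f_\sharp$ to the terminal morphism $s_\sharp *_{\mathcal{T}'} \to *_\mathcal{T}$ in $\pro(\mathcal{T})$ now yields a morphism
\[
\sigma \colon *_{\mathcal{T}'} \simeq f_\sharp s_\sharp *_{\mathcal{T}'} \longrightarrow f_\sharp *_\mathcal{T}
\]
in $\pro(\mathcal{T}')$. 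The composition $*_{\mathcal{T}'} \xrightarrow{\sigma} f_\sharp *_\mathcal{T} \to *_{\mathcal{T}'}$ is a self-map of the terminal object, hence the identity. So $\sigma$ is a section.

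Next I would truncate. Postnikov truncation preserves terminal objects and is functorial, so $\sigma_n := P_{\le n}(\sigma) \colon *_{\mathcal{T}'} \to P_{\le n}(f_\sharp *_\mathcal{T})$ defines a compatible family of sections of the Postnikov tower. For any $n \ge 2$, if we start from the section $\sigma_{n-1}$ at level $n-1$ built from $\sigma$, then the pullback gerbe
\[
\mathcal{G}_n = P_{\le n}(f_\sharp *_\mathcal{T}) \times_{P_{\le n-1}(f_\sharp *_\mathcal{T})} *_{\mathcal{T}'}
\]
acquires a global section, namely the one induced by $\sigma_n$ and the universal property of the pullback. A banded gerbe admitting a global section is equivalent to the trivial gerbe, so under the identification of \cite[7.2.2.28]{LurieHTT} between equivalence classes of gerbes banded by $\pi_n(f_\sharp *_\mathcal{T})$ and classes in $\mathrm{H}^{n+1}_{\mathcal{T}'}(\pi_n(f_\sharp *_\mathcal{T}))$, the class $[\mathcal{G}_n] = o_{n+1}(f,\sigma_{n-1})$ is zero.

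The main subtleties are purely formal: verifying that $(fs)_\sharp \simeq f_\sharp s_\sharp$ at the pro-level, and checking that the constructed morphism $\sigma$ really does retract the terminal map. Both reduce to standard uniqueness statements (uniqueness up to contractible choice of maps into $*_{\mathcal{T}'}$, and the 2-functoriality of taking left adjoints), and no computation with cohomology classes is needed beyond the gerbe-triviality criterion already recorded in the section.
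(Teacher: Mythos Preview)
Your argument is correct and follows essentially the same route as the paper: construct a global section of $f_\sharp \ast_{\mathcal{T}}$ via $\ast_{\mathcal{T}'} \simeq f_\sharp s_\sharp \ast_{\mathcal{T}'} \to f_\sharp \ast_{\mathcal{T}}$, and conclude that the obstruction classes vanish. The paper's proof is a terse two lines that stop after producing this section, whereas you additionally spell out the Postnikov-level consequence (compatible lifts $\sigma_n$ trivialize each gerbe $\mathcal{G}_n$), which is exactly the implicit content of the paper's final clause ``and thus all the obstruction classes vanish.''
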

\begin{proof}
	Let $s\colon \m{T}'\to \m{T}$ be a section, so that $f\circ s=id_{\m{T}'}$.
	The terminal map $s_\sharp(\ast_{\m{T}'}) \to \ast_\m{T}$ gives a global section
	\[
	\ast_{\m{T}'} = id_\sharp(\ast_{\m{T}'}) = f_\sharp s_\sharp(\ast_{\m{T}'})\to f_\sharp(\ast_{\m{T}})
	\]
	and thus all the obstruction classes vanishes.
\end{proof}

\subsection{Homological Obstruction Theory and Extensions of Sheaves}
Let $\m{T}$ be an $\infty$-topos and $\Lambda$ be a ring.
Let $\Lambda_\m{T}=I_\Lambda(*_\m{T})$ denote the constant sheaf with value $\Lambda$ in $\Sh{\m{T},\Lambda}$.
Similarly to the homotopical obstruction, we can define the \textit{homological} obstruction to a section
\begin{definition}
	For $n\ge 2$, let $t$ be an object in $\m{T}$. Consider $t_\Lambda=R_\Lambda I_\Lambda t$, and let $\sigma_{n-1}:*_\m{T}\to t_\Lambda$. We define \textit{the $n+1$-st homological obstruction class for extending $\sigma_{n-1}$} as the homotopical obstruction class $o_{n+1}(t_\Lambda,\sigma_{n-1})$.
\end{definition}

It turns out that the lowest homological obstruction is given by an extension class in $\Ext_{\Sh{\m{T},\Lambda}}^{n+1}(\Lambda_\m{T}, \m{H}_n(\Et(f)))$,
which is the main claim of this subsection.

Let $H\in Disc(\Sh{\m{T},\Lambda})$ be a discrete sheaf of $\Lambda$ modules over $\m{T}$. 
Let $\alpha \in \Ext^{n+1}_{\Sh{\m{T},\Lambda}}(\Lambda_\m{T},H)$.
To $\alpha$ we can attach a fiber sequence 
\[H[n] \to M_\alpha \to \Lambda_\m{T},\] 
unique up to equivalence of fiber sequences. 

The map $R_\Lambda(M_\alpha) \to R_\Lambda(\Lambda_\m{T})$ is an equivalence after applying 
the Postnikov truncation $P_{n-1}$, 
as $R_\Lambda$ preserves fibre sequences and because $R_\Lambda H[n]$ is $n-1$-connected. 
On the other hand, $P_{n-1}(R_\Lambda(\Lambda_\m{T}))$    
has a canonical section corresponding to the element $1 \in \Lambda$ hence inducing a section $\sigma_{n-1} \colon \ast_\m{T} \to  P_{n-1}(R_\Lambda(M_\alpha))$. 
Thus, we can define the obstruction class 
\[o_n^{\alpha} \coloneqq o_n(R_\Lambda(M_\alpha),\sigma_{n-1}) \in \HH^{n+1}_\m{T}( \pi_n(R_\Lambda(M_\alpha)))=\HH_\m{T}^{n+1}(H).\]

\begin{proposition}
	\label{EquivalenceOfHomologicalObstructions}
	In the notation above, for $n\ge 2$, the canonical isomorphism 
	\begin{equation}
	\label{equivOfHAndExt}
	\HH^{n+1}_{\m{T}}(H) \cong \Ext^{n+1}_{\Sh{\m{T},\Lambda}}(\Lambda_\m{T},H)
	\end{equation}
	sends $o_\alpha$ to $\alpha$. 
\end{proposition}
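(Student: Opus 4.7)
The plan is to trace through both sides of the asserted correspondence explicitly and exhibit $\m{G}_n$ as the fiber of the cohomology class representing $\alpha$. The canonical isomorphism $\HH^{n+1}_\m{T}(H) \cong \Ext^{n+1}_{\Sh{\m{T},\Lambda}}(\Lambda_\m{T},H)$ is the one induced by the adjunction $I_\Lambda \dashv R_\Lambda$: an extension class $\alpha\colon \Lambda_\m{T}\to H[n+1]$ corresponds to its adjoint $\tilde\alpha\colon \ast_\m{T} \to R_\Lambda(H[n+1]) = K(H,n+1)$, which factors as $\ast_\m{T}\xrightarrow{\eta} R_\Lambda(\Lambda_\m{T})\xrightarrow{R_\Lambda(\alpha)} K(H,n+1)$ with $\eta$ the unit. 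Under the gerbe-cohomology correspondence recalled just before Definition \ref{cocycleObstruction}, the gerbe classified by $\tilde\alpha$ is exactly its fiber, so the goal is to match this fiber with $\m{G}_n$.

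First I would compute the Postnikov filtration of $R_\Lambda(M_\alpha)$. Applying $R_\Lambda$ to the fiber sequence $H[n]\to M_\alpha \to \Lambda_\m{T}$ yields a fiber sequence $K(H,n)\to R_\Lambda(M_\alpha)\to R_\Lambda(\Lambda_\m{T})$ in $\m{T}$, since $R_\Lambda$, being a right adjoint, preserves limits. Since $R_\Lambda(\Lambda_\m{T})$ is discrete and $K(H,n)$ is $(n-1)$-connected, the long exact sequence in homotopy shows that $R_\Lambda(M_\alpha)$ is $n$-truncated with $\pi_0 \cong R_\Lambda(\Lambda_\m{T})$ and $\pi_n\cong H$. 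Hence for $n\ge 2$ we have $P_{\le n}R_\Lambda(M_\alpha) \simeq R_\Lambda(M_\alpha)$ and $P_{\le n-1}R_\Lambda(M_\alpha) \simeq R_\Lambda(\Lambda_\m{T})$, the canonical map $P_{\le n}\to P_{\le n-1}$ is the one from the fiber sequence, and the section $\sigma_{n-1}$ is identified with $\eta$.

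Next, since $M_\alpha$ is the fiber of $\alpha$, I write the defining pullback $M_\alpha = \Lambda_\m{T}\times_{H[n+1]}0$ and apply $R_\Lambda$ to obtain the pullback square
\[
\xymatrix{
R_\Lambda(M_\alpha) \ar[r]\ar[d] & \ast_\m{T} \ar[d] \\
R_\Lambda(\Lambda_\m{T}) \ar[r]^{R_\Lambda(\alpha)} & K(H,n+1).
}
\]
Pasting this with the pullback square defining $\m{G}_n$ produces the rectangle exhibiting $\m{G}_n$ as the fiber of $R_\Lambda(\alpha)\circ\eta = \tilde\alpha$, which is precisely the gerbe representing $\tilde\alpha$. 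At the level of underlying sheaves this establishes the claim.

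The main obstacle will be verifying that the two natural bandings $\pi_n(\m{G}_n)\cong H$ agree: the one arising from the identification $\pi_n R_\Lambda(M_\alpha)\cong H$ coming from the fiber sequence above (used to define $o_n^\alpha$), and the one arising from the connecting map $\pi_n(\m{G}_n)\cong \pi_{n+1}K(H,n+1)\cong H$ (used to classify $\m{G}_{\tilde\alpha}$). Both identifications are induced by the same map $K(H,n)\to R_\Lambda(M_\alpha)$ entering the pasted rectangle, so the verification is essentially diagram-chasing in the long exact sequence using the Hurewicz isomorphism. Since this is precisely the compatibility deferred from the proof of \Pref{functoriality of cohomology}, establishing it here completes both proofs simultaneously.
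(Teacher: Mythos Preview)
Your proof is correct and takes essentially the same approach as the paper: apply $R_\Lambda$ to the pullback square defining $M_\alpha$, paste with the section $\sigma_{n-1}=1_\Lambda=\eta$, and identify the resulting gerbe as the fiber of the adjoint map $\tilde\alpha$. The only cosmetic differences are that the paper's pullback square is the transpose of yours and that the paper isolates the adjunction identification $R_\Lambda(\alpha)\circ 1_\Lambda \leftrightarrow \alpha$ as a separate lemma (Lemma~\ref{cocycles adjunction}), whereas you state it inline.
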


The proof is based on the following observation. 

\begin{lemma}
	\label{cocycles adjunction}
	Let $f: \Lambda \to H[n+1]$ be a map, classified by cohomology class
	$\beta \in \Ext^{n+1}_{\mtl}(\Lambda, H)$. 
	Then the composition 
	\[*_\m{T} \xrightarrow{1_\Lambda} R_\Lambda(\Lambda_\mt) \xrightarrow{R_\Lambda(f)} R_\Lambda(H[{n+1}]) \cong K(H,{n+1})\]
	is classified by $\beta$, under the equivalence \eqref{equivOfHAndExt}.
\end{lemma}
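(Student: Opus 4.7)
The plan is to reduce the statement to the standard description of an adjunction bijection via its unit. Recall from the Cohomological Remark that the isomorphism $\HH^{n+1}_\mt(H)\cong \Ext^{n+1}_{\mtl}(\Lambda_\mt,H)$ appearing in \eqref{equivOfHAndExt} is precisely the equivalence
\[
\Hom_\mt(\ast_\mt, R_\Lambda(H[n+1])) \simeq \Hom_{\mtl}(\Lambda_\mt, H[n+1])
\]
arising from the adjunction $I_\Lambda\dashv R_\Lambda$ after passing to $\pi_0$, together with the identifications $I_\Lambda(\ast_\mt)\cong \Lambda_\mt$ and $R_\Lambda(H[n+1])\cong K(H,n+1)$. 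So the lemma reduces to unwinding how this adjunction bijection is computed on the specific map $f$.

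Next I would invoke the general formula: for an adjoint pair $L\dashv R$ with unit $\eta\colon \mathrm{id}\to RL$, the bijection $\Hom(La,b)\xrightarrow{\sim}\Hom(a,Rb)$ is given by $g\mapsto R(g)\circ \eta_a$. Applied with $L=I_\Lambda$, $R=R_\Lambda$, $a=\ast_\mt$, and $g=f$, this expresses the map classified by $\beta=[f]$ as the homotopy class of the composition
\[
\ast_\mt \xrightarrow{\eta_{\ast_\mt}} R_\Lambda(\Lambda_\mt) \xrightarrow{R_\Lambda(f)} R_\Lambda(H[n+1]).
\]
Comparing with the composition in the statement, it remains only to identify $\eta_{\ast_\mt}$ with the map denoted $1_\Lambda$.

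The one point that requires genuine verification, and which I expect to be the main obstacle in writing things out carefully, is precisely this identification. By construction $1_\Lambda$ is the canonical section of $R_\Lambda(\Lambda_\mt)$ corresponding to the element $1 \in \Lambda \cong \pi_0 R_\Lambda(\Lambda_\mt)$, whereas $\eta_{\ast_\mt}$ is characterized abstractly by the property that its adjoint transpose is $\mathrm{id}_{\Lambda_\mt}$. To bridge the two, I would note that under the adjunction bijection
\[
\pi_0 R_\Lambda(\Lambda_\mt) \cong \pi_0 \Hom_{\mtl}(\Lambda_\mt, \Lambda_\mt),
\]
the right-hand side is naturally identified with $\Lambda$ (since $\Lambda$ is the endomorphism ring of the unit of $D(\Lambda)$), and under these identifications the ring unit $1 \in \Lambda$ is the one corresponding to $\mathrm{id}_{\Lambda_\mt}$. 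Hence $1_\Lambda = \eta_{\ast_\mt}$, and the required composition agrees with the image of $\beta$ under the equivalence of \eqref{equivOfHAndExt}.
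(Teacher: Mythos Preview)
Your proposal is correct and follows essentially the same approach as the paper: both identify \eqref{equivOfHAndExt} with the $I_\Lambda\dashv R_\Lambda$ adjunction isomorphism, use the unit formula $g\mapsto R(g)\circ\eta_a$, and then identify $\eta_{\ast_\mt}$ with $1_\Lambda$. If anything, you supply more justification for the last identification than the paper does, which simply asserts that the unit at $\ast_\mt$ ``can be computed directly'' to be $1_\Lambda$.
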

\begin{proof}
	The identification \eqref{equivOfHAndExt} of $\pi_0(\Hom_\m{T}(*_\m{T}, K(H,n+1)))$ with $\Ext^{n+1}_{\mtl}(\Lambda_\m{T},H)$, is 
	given by the adjunction isomorphism
	\[\Hom_{\mt}(*_\m{T},R_\Lambda(H[{n+1}])) \cong \Hom_{\mtl}(\Lambda \otimes *_\m{T},H[{n+1}])\]
	using the identifications $R_\Lambda(H[{n+1}]) \cong K(H,{n+1})$ 
	and $\Lambda \otimes \ast_\m{T} = \Lambda_\mt$. 
	The unit map for the above adjunction can be computed on the terminal object directly
	to be $\ast_\m{T} \xrightarrow{1_\Lambda} R_\Lambda(\Lambda \otimes \ast_\m{T}) = R_\Lambda(\Lambda_\mt)$.
	The composition 
	\[*_\m{T} \xrightarrow{1_\Lambda} R_\Lambda(\Lambda) \xrightarrow{R_\Lambda(f)}  K(H,{n+1})\] 
	can therefore be identified with the composition 
	\[*_\m{T} \xrightarrow{1_\Lambda} R_\Lambda(\Lambda \otimes *_\m{T}) 
	\xrightarrow{R_\Lambda(f)}R_\Lambda(H[{n+1}]),\]
	which is exactly the morphism corresponding to 
	\[f\in\Hom_{\Sh{\m{T},\Lambda}}(\Lambda\otimes\ast_\mt,H[n+1])\]
	via the adjunction $I_\Lambda \dashv R_\Lambda$, and thus to the class $\beta$.
\end{proof}

\begin{proof}[Proof of Proposition \ref{EquivalenceOfHomologicalObstructions}]
	
	First of all, the fiber sequence  
	\[H[n] \to M_\alpha \to \Lambda_\mt\] 
	induces by shifting a fiber sequence  
	\[M_\alpha \to \Lambda_\mt \to H[n+1].\] 
	The homotopy class of the map $\Lambda\mt \to H[n+1]$ above is classified by 
	$\alpha$. 
	
	Applying the functor $R_\Lambda$ to this fiber sequence we get 
	a pullback diagram 
	\[
	\xymatrix{
		R_\Lambda(M_\alpha) \ar[r]\ar[d]   &K(\Lambda, 0) \ar[d]\\
		\ast_\m{T} \ar[r]                  &K(H, n+1)
	}
	\]
	
	Note that the map $*_\m{T} \to K(H, n+1)$ in the diagram is the canonical base-point, i.e. classified by the trivial cocycle, since it is the $R_\Lambda$ of the zero map $0 \to H[n+1]$.  
	
	Consider now the map $1_\Lambda : *_\m{T} \to R_\Lambda(\Lambda_\mt)$, picking the point $1$. 
	We can extend the pullback square above to the diagram 
	\[
	\xymatrix{
		\m{G}_\alpha \ar[r] \ar[d]          & \ast_\m{T} \ar^{1_\Lambda}[d] \\
		R_\Lambda(M_\alpha) \ar[r]\ar[d]   &K(\Lambda,0) \ar[d]\\
		\ast_\m{T} \ar[r]               &K(H, n+1)
	}
	\]
	in which both of the squares are pullback squares. 
	By the definition of the cocycle $o_n^\alpha$, it remains to show that the composition 
	\[
	*_\m{T} \xrightarrow{1_\Lambda} K(\Lambda,0) \to K(H, n+1)
	\]
	is classified by the cocycle $\alpha$. Indeed, in this case 
	$\m{G}_\alpha$, which is the gerbe classified by $o_n^\alpha$, is the equalizer of the 
	canonical section of $K(H,n+1)$ and the section classified by $\alpha$, 
	hence classified by $\alpha$ by \cite[Proposition 7.2.2.8]{ LurieHTT}. 
	This last result follows directly from Lemma \ref{cocycles adjunction}.
\end{proof}

\section{Quadratic Bundles}
\label{FormsAndMaps}

In this section we calculate and recall the properties of the universal quadratic bundle 
that are needed for the final computation of the obstruction. 
The notation and definitions will be the same as in \cite{jardine1992cohomological}.

\subsection{The Universal Quadratic Bundle}
Let $BG$ denote the simplicial variety defined over a field $K$ of characteristic$\ne 2$
given by the standard  
Milnor realization, namely $BG_k = G^k$ with the usual face and degeneracy maps. 
It is known that $BG$ classifies \etale-locally trivial principle $G$ bundles.
In our case, for $G=O_n$, we claim that $BG$ presents the stack classifying quadratic bundles.

\begin{definition}
	Let $X$ be a scheme over $K$. A \textbf{quadratic bundle of rank $n$}  over $X$ is a 
	locally free sheaf $\Epsilon$ over $X$ of rank $n$ together with a non-degenerate pairing 
	$B\colon  \Epsilon \otimes \Epsilon \to \mathcal{O}_X$ of sheaves of $\mathcal{O}_X$ modules. 
\end{definition}

A quadratic bundle of rank $d$ is called $\emph{trivial}$ if it is isomorphic to the quadratic bundle $(\m{O}_X^d,\sum_i x_i^2)$.
The stack of locally trivial quadratic bundle is represented by the simplicial scheme $BO_n$. To see this, note that there is n equivalence between the groupoid of locally trivial quadratic bundles on a scheme $S$ and principal $BO_n$-bundles on $S$. Indeed, let $V_n$ be the standard quadratic representation of $BO_n$. If $P$ is a principal $O_n$ bundle then $P \times_{O_n} V$ is a quadratic bundle. Conversely, if $(\m{E},B)$ is a quadratic bundle, then the collection of orthonormal frames of $\m{E}$ is a principal $O_n$-bundle, locally trivial if $\m{E}$ is. These two constructions give an equivalence of the functors on schemes given by locally trivial quadratic bundles and $BO_n$.     

\begin{lemma}
	\label{lemma: quadratic bundles are locally trivial}
	Every quadratic bundle is locally trivial in the $\etale$ topology.
\end{lemma}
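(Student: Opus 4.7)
The plan is to reduce to the affine case and then perform an étale Gram--Schmidt process. First, since $\Epsilon$ is locally free of rank $n$, a Zariski cover of $X$ splits it into trivial pieces, so it suffices to work over $X=\spec R$ with $\Epsilon=R^n$. The pairing $B$ is then a symmetric matrix $M\in M_n(R)$ whose determinant is a unit, and I want to show that étale-locally on $R$ there is a change of basis converting $M$ into the identity matrix.

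I would proceed by induction on the rank $n$. The base $n=1$ is the key arithmetic input: the form is multiplication by some $a\in R^\times$, and the ring extension $R\to R[t]/(t^2-a)$ is étale since $2\in R^\times$ and $a$ is a unit; over this extension the form becomes $(1)$. For the inductive step, the first task is to locate a vector $v\in R^n$ with $B(v,v)\in R^\times$. At any residue field of $R$, non-degeneracy together with $\mathrm{char}\neq 2$ (via the polarization identity $B(u+w,u+w)-B(u,u)-B(w,w)=2B(u,w)$) forces some element of the standard basis, or a sum of two basis elements, to have non-zero self-pairing; lifting and using that the unit locus is Zariski-open produces such a $v$ after a Zariski localization. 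After an étale extension extracting $\sqrt{B(v,v)}$, one may assume $B(v,v)=1$, and the projector $w\mapsto w-B(v,w)v$ realizes $R^n$ as an orthogonal direct sum $\langle v\rangle\oplus v^{\perp}$ with $v^{\perp}$ free of rank $n-1$ carrying a non-degenerate form; the inductive hypothesis finishes the job.

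The main technical obstacle is the iterated étale-cover bookkeeping: each step of the induction may require passing to a further Zariski localization (to find $v$) and a further étale extension (to extract the square root), so one must verify that the composite refinements are still étale and still trivialize the form over the entire original base. This follows from the stability of étale morphisms under composition and base change, together with the fact that locally free sheaves are Zariski-locally trivial. With these at hand, the induction closes cleanly and produces a single étale cover of $X$ on which $(\Epsilon, B)$ becomes isomorphic to $(\O_X^n,\sum_i x_i^2)$.
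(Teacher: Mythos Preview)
Your proof is correct and follows essentially the same approach as the paper: Zariski-locally trivialize the underlying bundle, run a Gram--Schmidt process to diagonalize the form, and then pass to an \'etale cover extracting square roots of the diagonal entries. The only difference is organizational---the paper first diagonalizes completely Zariski-locally and then extracts all square roots in a single \'etale cover at the end, whereas you interleave the square-root extraction into the induction---but the mathematical content is identical.
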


\begin{proof}
	Let $(\Epsilon,B)$ be a quadratic bundle over $X$. Then Zariski locally, $\Epsilon$ is a trivial vector bundle. 
	Thus, we may assume without lost of generality that $\Epsilon \cong \mathcal{O}_X^n$ and $B$ is some symmetric matrix 
	of functions, which is everywhere non-degenerate. 
	Choose a closed point $x \in X$. The reduction to $k(x)$ of $B$ can be diagonalized. Choose 
	a matrix $\bar{A}$ such that $\bar{A} B|_{k(x)} \bar{A}^T$ is diagonal, and lift $\bar{A}$ arbitrarily to a neighborhood of 
	$x$. We may assume that $A$ is everywhere defined, by shrinking $X$. 
	Then, since $B_{1,1}$ is invertible at $x$ and by shrinking $X$ further, we may assume that $B_{1,1}$ is invertible. 
	By applying row and column operations we can now eliminate all the other entries in the first row and column of $B$. 
	
	Applying the same procedure to the first minor of $B$, and inductively to all matrices 
	$\{B_{i,j}\}_{i,j \ge k}$ we can find some Zariski Neighborhood $X'$ and invertible matrix $A'$ of functions on 
	$X'$ such that $A' B (A')^T$ is diagonal, say $A'B(A')^T = diag(f_1,...,f_n)$. 
	Hence, we can assume that $B$ itself is the diagonal matrix $diag(f_1,...,f_n)$.
	Since $B$ is non-degenerate, the $f_i$-s are invertible. 
	Since $char(K) \ne 2$, we can add $\sqrt{f_i}$ to $\m{O}_XX$ to get an \etale cover of $X$. 
	Formally, we consider $Y = \{(x,y_1,...,y_n) \in X \times \AA^n_K \colon  y_i^2 = f_i(x)\}$ 
	and the projection $p_{X}\colon  Y \to X$ is an \etale cover. Moreover, 
	$p_{X}^*(B) = diag(y_1^2,...,y_n^2) \equiv diag(1,..,1)$ 
	via the matrix $diag(1/y_1,...,1/y_n)$. 
\end{proof}

\begin{corollary}
	\label{lemma: BO_n classifies quadratic bundles}
	$BO_n$ is the stack classifying quadratic bundles.
\end{corollary}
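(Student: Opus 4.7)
The plan is to assemble the corollary from two ingredients already laid out in the section. First, the discussion preceding Lemma~\ref{lemma: quadratic bundles are locally trivial} exhibits, for any scheme $S/K$, a natural equivalence of groupoids between the groupoid of \emph{locally trivial} quadratic bundles of rank $n$ on $S$ and the groupoid of principal $O_n$-bundles on $S$. The two mutually inverse constructions are the associated bundle $P \mapsto P \times_{O_n} V_n$ (where $V_n$ is the standard orthogonal representation) and the orthonormal frame bundle $(\mathcal{E},B) \mapsto \mathrm{Frame}(\mathcal{E},B)$. Second, it is a standard fact (referenced at the start of the subsection) that the simplicial scheme $BO_n$ represents, as a stack on $\mathrm{Sch}_{/K}$ in the \etale topology, the stack of \etale-locally trivial principal $O_n$-bundles.

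Combining these two, $BO_n$ represents the stack of \etale-locally trivial \emph{quadratic} bundles. To upgrade this to the statement that $BO_n$ represents \emph{all} quadratic bundles (of rank $n$), I invoke Lemma~\ref{lemma: quadratic bundles are locally trivial}, which asserts that every quadratic bundle on a $K$-scheme is \etale-locally trivial. Hence the fully faithful inclusion of the substack of locally trivial quadratic bundles into the stack of all quadratic bundles is in fact an equivalence of stacks in the \etale topology, because the property of being an equivalence of stacks can be checked locally and the inclusion is an isomorphism on \etale stalks by the lemma.

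Putting these together: for any $S \in \mathrm{Sch}_{/K}$, the groupoid $\Hom(S, BO_n)$ of $S$-points of $BO_n$ is the groupoid of principal $O_n$-bundles on $S$ that are \etale-locally trivial, which by the frame-bundle correspondence is the groupoid of \etale-locally trivial quadratic bundles of rank $n$ on $S$, which by Lemma~\ref{lemma: quadratic bundles are locally trivial} is the entire groupoid of rank $n$ quadratic bundles on $S$. This gives the required equivalence of stacks and proves the corollary.

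The only substantive step is Lemma~\ref{lemma: quadratic bundles are locally trivial}, whose proof is already given above; the rest is bookkeeping of the functor of points. I expect no real obstacle here, but the subtle point to be careful about is that the frame-bundle construction only produces a principal $O_n$-bundle once a local trivialization exists, so the equivalence of groupoids on the level of \emph{trivializable} objects must be combined with the lemma before one can pass to the stack of all quadratic bundles; this is why the lemma is logically indispensable and is not a cosmetic remark.
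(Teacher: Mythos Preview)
Your proposal is correct and matches the paper's approach exactly: the corollary is stated without proof because it follows immediately from combining the preceding discussion (that $BO_n$ represents \etale-locally trivial quadratic bundles via the frame-bundle/associated-bundle equivalence with principal $O_n$-bundles) with Lemma~\ref{lemma: quadratic bundles are locally trivial}. Your write-up simply makes this implicit deduction explicit.
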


\begin{corollary}
	$\Hom(\spec(K), BO_n)$ is equivalent to the groupoid of non-degenerate quadratic forms of rank $n$ over $K$. 
\end{corollary}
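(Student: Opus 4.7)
The plan is to directly specialize the preceding corollary (that $BO_n$ is the stack classifying quadratic bundles) to the case $X = \spec(K)$. Once specialized, $\Hom(\spec(K), BO_n)$ is equivalent to the groupoid of rank $n$ quadratic bundles on $\spec(K)$, so all that remains is to identify this groupoid with the groupoid of non-degenerate quadratic forms of rank $n$ over $K$.

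First I would unpack the definition of a quadratic bundle on $\spec(K)$. Since $K$ is a field, locally free sheaves of rank $n$ on $\spec(K)$ are the same as $n$-dimensional $K$-vector spaces, and morphisms of locally free sheaves correspond to $K$-linear maps. Hence a quadratic bundle on $\spec(K)$ is precisely a pair $(V,B)$ with $V$ an $n$-dimensional $K$-vector space and $B \colon V \otimes_K V \to K$ a non-degenerate symmetric $K$-bilinear pairing.

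Next I would use the assumption $\mathrm{char}(K) \neq 2$ to pass between symmetric bilinear forms and quadratic forms via the standard polarization identities
\[
q(v) \;=\; B(v,v), \qquad B(v,w) \;=\; \tfrac{1}{2}\bigl(q(v+w) - q(v) - q(w)\bigr).
\]
In this characteristic these constructions are mutually inverse and preserve non-degeneracy, yielding a bijection on objects between non-degenerate symmetric bilinear forms of rank $n$ and non-degenerate quadratic forms of rank $n$ over $K$.

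Finally, on the morphism level I would check that isomorphisms of quadratic bundles are exactly $K$-linear isomorphisms of the underlying vector spaces that intertwine the bilinear pairings, which under polarization correspond precisely to isometries of the associated quadratic forms. Combined with the previous corollary, this produces the claimed equivalence of groupoids. There is no serious obstacle here; the argument is a routine unwinding of definitions, and the only mild subtlety is the translation between symmetric bilinear forms and quadratic forms, which is immediate away from characteristic $2$.
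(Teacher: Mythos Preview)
Your proposal is correct and follows exactly the intended route: the paper states this corollary without proof, treating it as an immediate specialization of the preceding corollary that $BO_n$ classifies quadratic bundles, and your argument simply spells out that specialization together with the standard identification of quadratic bundles over $\spec(K)$ with non-degenerate quadratic forms.
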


Let $S^n$ denote the standard $n$-sphere over $K$. If $B$ is a quadratic form, 
let $S_B$ denote the unit sphere of $B$, given by  
\[S_B = \{v \in \AA^{n+1} \colon  B(v,v) = 1\}.\] 
Then $S^n = S_{x_0^2 + \cdots + x_n^2}$. 

A similar argument to the proof of Lemma \ref{lemma: quadratic bundles are locally trivial} shows that the quotient stack
$O_{n+1} \quo S^n$ classifies quadratic bundles with an orthonormal section.
The groupoid of such data, $(\Epsilon,B,\sigma)$, is canonically equivalent to the groupoid of quadratic bundles of rank $n$, via 
the rank $n$ quadratic bundle $(\Epsilon,B,\sigma)\mapsto(\sigma^\perp,B|_{\sigma^\perp})$. 
Thus, we have an equivalence
\[O_{n+1} \quo S^n \cong BO_n.\]

To distinguish the quotient stacks from the objects they classify, we denote by
$Q_{n+1}$ the stack of quadratic rank $n+1$ bundles,
and by $Q_{n+1}^S \cong Q_{n}$ the stack of 
quadratic bundles with a length 1 section. 
They are indeed stacks, as they are equivalent to stacks represented by $BO_n$,
as displayed in the following commutative diagram 
\begin{equation}
\label{equivalenceOfQuadraticThingies}
\xymatrix{
	Q_{n} \ar^{\sim}[r] \ar[rd] & \ar[d] \ar^{\sim}[r]Q_{n+1}^S & \ar[d] \ar^{\sim}[r] O_{n+1} \quo S^{n} & \ar[dl] BO_{n} \\ 
	& \ar^\sim[r] Q_{n+1}                 & BO_{n+1}  
}
\end{equation}



\begin{proposition}
	Let $X \stackrel{f_B}{\to} Q_n$ be a morphism classifying a quadratic bundle $(\Epsilon,B)$. 
	Then the pullback 
	$f_B^* Q_n^S$ is the sphere bundle of $(\Epsilon,B)$ over $X$.  
	In particular, the morphism $Q_n^S \to Q_n$ is schematic.
\end{proposition}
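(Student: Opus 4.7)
The plan is to identify the pullback $f_B^* Q_n^S$ by computing its functor of points and showing directly that it is represented by the sphere scheme $S_{(\Epsilon,B)} \subseteq \Epsilon$. Since the sphere scheme is a closed subscheme of the total space of the vector bundle $\Epsilon$, schematicity of $Q_n^S \to Q_n$ follows by base change along any $f_B$, since any morphism $X \to Q_n$ is, by Corollary \ref{lemma: BO_n classifies quadratic bundles}, classified by a quadratic bundle $(\Epsilon,B)$ on $X$.

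First I would unwind the universal property. Given a scheme $Y$ with a morphism $g\colon Y \to X$, a lift of $f_B\circ g \colon Y \to Q_n$ to $Q_n^S$ is, by the modular description of $Q_n^S \cong Q_{n+1}^S$ as the stack classifying quadratic bundles of the relevant rank equipped with a length-$1$ section, the same datum as a section $\sigma$ of $g^*\Epsilon$ satisfying $g^*B(\sigma,\sigma)=1$. By the definition of pullback of stacks, such lifts are in natural bijection with morphisms $Y \to f_B^* Q_n^S$ over $X$. On the other hand, by definition of the sphere bundle
\[
S_{(\Epsilon,B)} = \{v \in \Epsilon \co B(v,v) = 1\} \subseteq \Epsilon,
\]
morphisms $Y \to S_{(\Epsilon,B)}$ over $X$ are precisely sections $\sigma$ of $g^*\Epsilon$ with $g^*B(\sigma,\sigma)=1$. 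Matching these two descriptions gives the identification $f_B^* Q_n^S \cong S_{(\Epsilon,B)}$.

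Having exhibited $f_B^*Q_n^S$ as the scheme $S_{(\Epsilon,B)}$, the last clause is immediate: for every morphism from a scheme $X$ to $Q_n$ the pullback $X \times_{Q_n} Q_n^S$ is representable by a scheme, which is the definition of $Q_n^S \to Q_n$ being schematic.

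The one step requiring care is the first, namely verifying that the modular interpretation $Q_n^S \cong Q_{n+1}^S$ established via the diagram \eqref{equivalenceOfQuadraticThingies} really does identify the forgetful morphism $Q_n^S \to Q_n$ with the functor $(\Epsilon,B,\sigma) \mapsto (\sigma^\perp, B|_{\sigma^\perp})$, and that this assignment is the one induced by $f_B$. This is where I expect the main (minor) friction, but it is built into the very construction of the equivalence $O_{n+1}\quo S^n \cong BO_n$ given earlier: the section $\sigma$ corresponds to the first vector of an orthonormal frame, and the residual bundle $\sigma^\perp$ corresponds to the underlying rank-$n$ quadratic bundle. Once this bookkeeping is verified, the rest of the argument is formal from the universal property of pullbacks of stacks.
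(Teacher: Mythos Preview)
Your core argument (paragraphs 2--3) is correct and is exactly the paper's proof: both compute the functor of points of $S_{(\Epsilon,B)}$ and of $X\times_{Q_n}Q_n^S$ and observe they coincide, each being the functor sending $Y$ to pairs consisting of a morphism $Y\to X$ together with a unit section of the pulled-back bundle.

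Your final paragraph, however, is confused and should simply be deleted. The map $Q_n^S\to Q_n$ is the forgetful functor $(\Epsilon,B,\sigma)\mapsto(\Epsilon,B)$; it is \emph{not} $(\Epsilon,B,\sigma)\mapsto(\sigma^\perp,B|_{\sigma^\perp})$. The latter is the equivalence $Q_{n+1}^S\xrightarrow{\sim}Q_n$ (shifting the rank by one) appearing in diagram~\eqref{equivalenceOfQuadraticThingies}, and it plays no role in this proposition. Likewise, your ``$Q_n^S\cong Q_{n+1}^S$'' is false and presumably a slip. Once you recognize that the vertical arrow $Q_n^S\to Q_n$ is just ``forget $\sigma$'', the friction you anticipate vanishes: your paragraph~2 already uses this identification correctly, and no further bookkeeping with orthogonal complements is required.
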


\begin{proof}
	A morphism $g:Y\to S_B(\m{E})$ determines a morphism $\bar{g}:Y\to S_B(\m{E}) \to X$. Since $\m{E}$ admits a canonical unit section over $S_B(\m{E})$, its pullback along $g$ determines a unit section of $\m{E}$. Conversely, if $\bar{g}:Y\to X$ is a morphism and $\sigma$ is a unit section of $\bar{g}^*\m{E}$, then we get a map $g:Y\to S_B(\m{E})$, since pulback of vector bundles commutes with formation of total space in a way compatible with unit sections. 
	
	Hence, $S_B(\m{E})$ classifies pairs of a morphism to $X$ and a unit section of the pulback of $\m{E}$. This is by definition the groupoid classified by the pulback $X\times_{Q_n}Q_n^S$. 
\end{proof}

\begin{corollary}
	Let $B$ be a quadratic form over $K$, then we have a pullback diagram 
	\[\xymatrix{
		S_B \ar[r] \ar[d]& Q_n^S \ar[d] \\
		spec(K) \ar[r]  & Q_n 
	}
	\]
\end{corollary}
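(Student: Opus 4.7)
The plan is to obtain this as an essentially immediate specialization of the preceding proposition to the base scheme $X = \spec(K)$. Concretely, a non-degenerate quadratic form $B$ over $K$ is the same data as a (necessarily trivial, as an underlying module) quadratic bundle on $\spec(K)$, and by \Cref{lemma: BO_n classifies quadratic bundles} such a bundle is classified by a morphism $f_B \colon \spec(K) \to Q_n$. Since the proposition already produces, for any morphism $X \to Q_n$ classifying a quadratic bundle $(\m{E},B)$, a pullback diagram with $X\times_{Q_n} Q_n^S$ the sphere bundle of $(\m{E},B)$, it suffices to identify this sphere bundle in the special case $X = \spec(K)$.

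To do this, I would simply unwind the definition of the sphere bundle when the underlying vector bundle is a free module $K^{n}$ over a point: the total space of $\m{E}$ is then canonically $\AA^{n}_K$, and the subscheme cut out by the equation $B(v,v)=1$ is, by the definition given just above the proposition, exactly the variety $S_B$. Hence the fibre product $\spec(K)\times_{Q_n} Q_n^S$ is identified with $S_B$, producing the desired pullback square
\[
\xymatrix{
S_B \ar[r]\ar[d] & Q_n^S \ar[d] \\
\spec(K) \ar^{f_B}[r] & Q_n.
}
\]

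There is no real obstacle: the only thing one has to verify is that under the equivalence of groupoids $\Hom(\spec(K),Q_n)\simeq \{\text{rank-}n\text{ quadratic forms over }K\}$, the section data classified by $Q_n^S\to Q_n$ pulls back to the data of a point $v\in K^n$ with $B(v,v)=1$, which is immediate from the description of $Q_n^S$ as the stack of quadratic bundles equipped with a unit section. Thus the corollary follows directly from the previous proposition, applied to $X=\spec(K)$ and $(\m{E},B)=(K^n,B)$.
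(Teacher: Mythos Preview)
Your proposal is correct and matches the paper's approach: the corollary is stated without proof in the paper precisely because it is the immediate specialization of the preceding proposition to $X=\spec(K)$, where a rank-$n$ quadratic bundle is just a quadratic form and its sphere bundle is by definition $S_B$. Your unwinding of this identification is exactly what is implicit in the paper.
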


\subsection{Stiefel-Whitney Classes and the Cohomology of \texorpdfstring{$BO_n$}{BOn}}

Here we recall the cohomological properties of $BO_n$ without proofs, 
as computed in \cite{jardine1989universal}.

The \etale cohomology of $BO_n$ with $\Z2$ coefficients is the tensor product of the geometric and the arithmetic cohomologies.
\begin{theorem}[{\cite[Thm. 1]{jardine1989universal}}]
	\label{CohomologyOfBOn}
	Let $K$ be a field of characteristic $\neq 2$. And let $A$ denote the mod 2 Galois cohomology ring 
	$\HH^*_{\et}(K,\Z2)$ of $K$.
	Then there is an isomorphism of graded algebras of the form 
	\[
	\HH^*_{\et}(BO_n, \Z2) \cong A[HW_1,\ldots, HW_n]
	\]
	where the polynomial generator $HW_i$ has degree $i$.
\end{theorem}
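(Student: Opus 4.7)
The plan is to prove Theorem \ref{CohomologyOfBOn} by induction on $n$, using the sphere bundle fibration that relates $BO_{n-1}$ and $BO_n$, together with the Leray-Serre spectral sequence in \'etale cohomology. The base case $n=0$ is trivial, and the case $n=1$ reduces to computing $\HH^*_\et(B\mu_2,\Z2)$ via the Kummer sequence, which gives a polynomial ring $A[HW_1]$ on a single degree-one generator corresponding to the connecting map to $\HH^1(K,\mu_2)\cong K^\times/(K^\times)^2$.

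For the inductive step, observe that $O_{n-1}$ is the stabilizer of a fixed unit vector under the standard $O_n$-action on the sphere $S^{n-1}$, so the homogeneous space $O_n/O_{n-1}$ is identified with $S^{n-1}$. This yields a fiber sequence of simplicial schemes
\[
S^{n-1} \longrightarrow BO_{n-1} \longrightarrow BO_n,
\]
which on mod-$2$ \'etale cohomology gives a Leray-Serre spectral sequence
\[
E_2^{p,q} = \HH^p_\et(BO_n, \underline{\HH^q_\et(S^{n-1},\Z2)}) \;\Rightarrow\; \HH^{p+q}_\et(BO_{n-1},\Z2).
\]
The geometric mod-$2$ \'etale cohomology of $S^{n-1}$ over a separably closed field agrees with the topological one, i.e. $\Z2$ in degrees $0$ and $n-1$, and the local system is trivial because the action of the Galois group of $K$ on $\HH^*_\et(S^{n-1}_{\bar K},\Z2)$ is trivial (the classes are defined over $K$). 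Thus the only potentially nonzero differential is the transgression $d_n\colon E_n^{0,n-1}\to E_n^{n,0}$.

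I would define $HW_n$ as the transgression of the fundamental class of $S^{n-1}$. By the inductive hypothesis $\HH^*_\et(BO_{n-1},\Z2)\cong A[HW_1,\dots,HW_{n-1}]$, so the spectral sequence has $E_2 = A[HW_1,\dots,HW_{n-1}]\otimes_{\Z2} \Z2[e]$ where $e$ is the generator of $\HH^{n-1}(S^{n-1})$. The convergence together with the inductive description of the abutment forces $d_n(e) = HW_n$ to be a non-zero-divisor generating a polynomial ring factor, giving precisely $\HH^*_\et(BO_n,\Z2)\cong A[HW_1,\dots,HW_n]$. The independence of $HW_1,\dots,HW_n$ as polynomial generators over $A$ can be verified by a splitting principle: pull back along $BO_1^n\to BO_n$, which corresponds to restricting to a maximal \'etale-locally diagonal torus, and observe that the map sends $HW_i$ to the $i$-th elementary symmetric polynomial in the $n$ degree-one classes pulled back from the factors; since these are algebraically independent over $A$ in $\HH^*_\et(BO_1^n,\Z2) = A[x_1,\dots,x_n]$, so are the $HW_i$.

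The main obstacle will be handling the mixed arithmetic-geometric nature of the spectral sequence: although the transgression differential is formally clear over a separably closed base, over a general field $K$ one needs to check that the transgression actually lands in the arithmetic part $A\otimes \HH^n(BO_n^{\bar K},\Z2)$ in a way compatible with the $A$-module structure, and that no later differentials $d_{>n}$ appear. I would handle this by showing that the classes $HW_i$ admit natural geometric definitions (as universal Hasse-Witt invariants) pulled back from a rational point and are therefore permanent cycles, forcing $E_n = E_\infty$. This also justifies the polynomial-ring structure on the $E_\infty$-page and the absence of extension problems, since the filtration is by ideals generated by $HW_1,\dots,HW_n$ and the top quotient is exactly $A$ placed in total degree $0$.
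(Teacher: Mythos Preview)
The paper does not give its own proof of this theorem: it is quoted verbatim from Jardine's 1989 paper, introduced with ``Here we recall the cohomological properties of $BO_n$ without proofs, as computed in \cite{jardine1989universal}.'' So there is nothing in the paper to compare your argument against.

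That said, your proposal contains a genuine confusion that would need to be fixed. For the fibration $S^{n-1}\to BO_{n-1}\to BO_n$, the Leray--Serre $E_2$-page is
\[
E_2^{p,q}=\HH^p_\et\bigl(BO_n,\underline{\HH^q_\et(S^{n-1},\Z2)}\bigr),
\]
which involves the cohomology of the \emph{base} $BO_n$, the object you are trying to compute. You instead write $E_2=A[HW_1,\dots,HW_{n-1}]\otimes\Z2[e]$, which inserts the cohomology of the \emph{total space} $BO_{n-1}$ (your inductive hypothesis) into the $E_2$-page. That is backwards: $A[HW_1,\dots,HW_{n-1}]$ is the abutment, not the input. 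What the spectral sequence (equivalently, the Gysin sequence) actually gives you is that $\HH^*_\et(BO_n,\Z2)\to \HH^*_\et(BO_{n-1},\Z2)$ is surjective with kernel the principal ideal $(HW_n)$ generated by a non-zero-divisor in degree $n$. Deducing from this that $\HH^*_\et(BO_n,\Z2)$ is \emph{exactly} $A[HW_1,\dots,HW_n]$, and not some larger ring with the same quotient, requires a further argument (a degree-by-degree dimension count, or equivalently the observation that the short exact sequence $0\to \HH^{*-n}(BO_n)\to \HH^*(BO_n)\to A[HW_1,\dots,HW_{n-1}]\to 0$ determines each graded piece recursively). Your sentence ``the convergence together with the inductive description of the abutment forces\dots'' skips precisely this step.

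Your splitting-principle paragraph is correct and useful: the pullback to $BO_1^n$ does verify that $HW_1,\dots,HW_n$ are algebraically independent over $A$, hence that $A[HW_1,\dots,HW_n]\hookrightarrow \HH^*_\et(BO_n,\Z2)$. Combined with the corrected Gysin argument for surjectivity, this would complete the proof. (The triviality of the local system is fine for the reason you give, but note that the relevant monodromy is that of $\pi_1^\et(BO_n)$, not just $\Gamma_K$; with $\Z2$-coefficients this is automatic since $\Aut(\Z2)$ is trivial.)
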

\begin{remark}
	The classes $HW_i$ get their notation from "Hasse-Witt" invariants, which are the arithmetic analogue of the Stiefel Whitney classes, associated with billinear forms.
\end{remark}

Furthermore, Whitney's product formula holds. 
In this paper we will only need the special case of the product formula associated with 
the diagonal embedding $\Delta\colon  BO_1^n\to BO_n$, which is a consequence of the proof of the above theorem.
Using the Kunneth formula, it is easy to show that 
\[
\HH^*_{\et}(BO_1^n, \Z2) = A[HW_1^{(0)},\ldots,HW_1^{(n-1)}]
\]
where $HW_1^{(i)}$ are all in degree 1, the pullbacks of $HW_1$ under the projections $BO_1^n\to BO_1$,
and $A$ is the mod 2 cohomology of $K$, as above.
\begin{theorem}[{\cite[Thm. 2.13]{jardine1989universal}}]
	In the notations above, 
	\[
	\Delta^*(HW_{i}) = \sigma_i(HW^{(0)}_1,\ldots, HW^{(n-1)}_1),
	\]
	where $\sigma_i$ is the $i$'th symmetric polynomial in $n$ variables. 
\end{theorem}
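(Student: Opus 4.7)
The plan is to deduce the formula by iterated factorization of $\Delta$ through the direct-sum maps $\mu \colon BO_k \times BO_{n-k} \to BO_n$ that classify orthogonal direct sums of quadratic bundles, combined with an \etale analogue of the Whitney sum formula. Concretely, I would proceed by induction on $n$, writing $\Delta = \Delta_n$ as the composition
\[
BO_1 \times BO_1^{n-1} \xrightarrow{\;\mathrm{id}\times \Delta_{n-1}\;} BO_1 \times BO_{n-1} \xrightarrow{\;\mu\;} BO_n,
\]
with the base case $n = 1$ tautological.

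Assuming inductively the product formula $\mu^*(HW_i) = \sum_j HW_j \otimes HW_{i-j}$ together with the symmetric-polynomial formula for $\Delta_{n-1}^*$, the vanishing $HW_j|_{BO_1} = 0$ for $j \ge 2$ forces
\[
\Delta^*(HW_i) = \Delta_{n-1}^*(HW_i) + HW_1^{(0)} \cdot \Delta_{n-1}^*(HW_{i-1}).
\]
Substituting the inductive expression gives
\[
\sigma_i(HW_1^{(1)},\ldots,HW_1^{(n-1)}) + HW_1^{(0)} \cdot \sigma_{i-1}(HW_1^{(1)},\ldots,HW_1^{(n-1)}),
\]
which equals $\sigma_i(HW_1^{(0)},\ldots,HW_1^{(n-1)})$ by the standard recursion for elementary symmetric polynomials. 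Additionally, the $S_n$-equivariance of $\Delta$ (the symmetric group acts on $BO_n$ via inner automorphisms and hence trivially on cohomology) gives an a priori check that $\Delta^*(HW_i)$ is symmetric in the $HW_1^{(j)}$, which is consistent with the proposed answer.

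The main obstacle is the \etale Whitney sum formula for $\mu$. I would approach it by naturality and base change. By \Tref{CohomologyOfBOn} and the K\"unneth formula, both sides of the formula live in the polynomial $A$-algebra on the generators $HW_j \otimes 1$ and $1 \otimes HW_j$, and the classes $HW_j$ are constructed compatibly with extensions of the base field $K$. Thus it suffices to verify the formula after base change to an algebraic closure $\bar K$, where the arithmetic coefficient ring $A = \HH^*_\et(K,\Z2)$ collapses to $\Z2$. Over $\bar K$, the \etale-topological comparison identifies the $HW_i$ with the mod-$2$ Stiefel--Whitney classes of the classical topological $BO_n$, and the identity reduces to the classical Whitney product formula. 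Propagating the identity back to arbitrary $K$ by $A$-linearity of $\mu^*$ completes the argument. Alternatively, following Jardine's original treatment, one can establish this formula simultaneously with the cohomology computation of \Tref{CohomologyOfBOn} using the Serre spectral sequence for the universal sphere bundle $\widetilde S_n \to BO_{n-1} \to BO_n$, but the base-change approach is conceptually cleaner given the framework already developed in this paper.
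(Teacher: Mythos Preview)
The paper does not supply its own proof of this statement: the entire subsection opens with ``Here we recall the cohomological properties of $BO_n$ without proofs, as computed in \cite{jardine1989universal},'' and the theorem is simply quoted as \cite[Thm.~2.13]{jardine1989universal}. So there is nothing to compare your argument against within this paper; the authors defer entirely to Jardine.

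That said, a few comments on your proposal as a standalone argument. The inductive reduction via $\mu\colon BO_1\times BO_{n-1}\to BO_n$ is fine, and the elementary symmetric polynomial recursion is correct. The weak point is your justification of the Whitney sum formula $\mu^*(HW_i)=\sum_j HW_j\otimes HW_{i-j}$. Your base-change-to-$\bar K$ step does not finish the job: the ``\etale-topological comparison'' with the classical $BO_n(\RR)$ is only available in characteristic~$0$ (ultimately via Artin's comparison over $\CC$), whereas the theorem is asserted for any $K$ with $\operatorname{char}(K)\ne 2$. For positive characteristic you would need an additional specialization or lifting argument, which you have not supplied. More importantly, there is a risk of circularity: in Jardine's treatment the classes $HW_i$ are characterized precisely by the condition that $\Delta^*$ carries them to the elementary symmetric polynomials in the $HW_1^{(j)}$ (this is how the isomorphism in \Tref{CohomologyOfBOn} is set up, via the injection $\Delta^*$ onto the $S_n$-invariants). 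From that point of view the statement you are trying to prove is essentially the \emph{definition} of $HW_i$, and the Whitney sum formula is a consequence rather than an input. Your alternative suggestion---to run the Serre/Gysin spectral sequence for the universal sphere bundle and establish both results simultaneously---is in fact closer to what Jardine actually does, and is the cleaner route here.
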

\begin{corollary}
	\label{WhitneyProductFormula}
	In particular, $\Delta^*(HW_{n}) = HW^{(0)}_1\cup \cdots \cup HW^{(n-1)}_1$. 
\end{corollary}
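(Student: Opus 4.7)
The plan is essentially to invoke the preceding theorem and read off the special case $i=n$. By that theorem we have
\[
\Delta^*(HW_n) = \sigma_n(HW_1^{(0)}, \ldots, HW_1^{(n-1)})
\]
in $\HH^*_{\et}(BO_1^n,\Z2)$, where $\sigma_n$ denotes the $n$-th elementary symmetric polynomial in $n$ variables.

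The main observation is then purely algebraic: the $n$-th elementary symmetric polynomial in exactly $n$ variables is the product of all of them, i.e.
\[
\sigma_n(x_0,\ldots,x_{n-1}) = x_0\, x_1 \cdots x_{n-1}.
\]
Substituting $x_i = HW_1^{(i)}$ and interpreting the ring product as cup product in the graded-commutative algebra $\HH^*_{\et}(BO_1^n,\Z2)$ yields the claim. No sign issue arises because the coefficients are in $\Z2$, so graded commutativity of cup product is commutativity on the nose, and the expression $HW_1^{(0)}\cup\cdots\cup HW_1^{(n-1)}$ is well defined independently of the ordering.

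I expect no real obstacle here: the corollary is purely a substitution/specialization from the stated product formula, and the only thing to be verified is the identity $\sigma_n(x_0,\ldots,x_{n-1}) = \prod_i x_i$, which holds by definition of the elementary symmetric polynomials. Thus the proof can be written in one or two lines.
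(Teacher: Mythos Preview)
Your proposal is correct and matches the paper's approach exactly: the corollary is stated without proof as an immediate specialization (``In particular'') of the preceding theorem, obtained by setting $i=n$ and using $\sigma_n(x_0,\ldots,x_{n-1})=\prod_i x_i$. There is nothing to add.
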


The cohomology groups behave in the expected way for the canonical morphism $BO_m\to BO_n$.
\begin{theorem}
	Let $m<n$, and consider the map $BO_m\to BO_n$, given by the standard embedding of the orthogonal groups. 
	Then the corresponding map of cohomologies is given by
	\begin{align*}
	A[HW_1,\ldots,HW_n]     &\to    A[HW_1,\ldots,HW_m]\\
	\forall i\le m.\ HW_i   &\to    HW_i    \\
	\forall i>m.\ HW_i      &\to    0.
	\end{align*}
\end{theorem}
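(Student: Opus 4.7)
The plan is to reduce the question to the already-computed action of the diagonal pullbacks $\Delta_m^*$ and $\Delta_n^*$ on Hasse--Witt classes. Recall from the theorem stated just above Corollary~\ref{WhitneyProductFormula} that $\Delta_n^*(HW_i) = \sigma_i(HW_1^{(0)},\dots,HW_1^{(n-1)})$, and analogously for $m$. Since these elementary symmetric polynomials are algebraically independent over $A$, the map $\Delta_m^*$ is injective; thus, to identify $f^*(HW_i)$ for the standard embedding $f\colon BO_m \to BO_n$, it suffices to compute $\Delta_m^*(f^*HW_i)$.

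To carry this out I would set up the commutative square
\[
\xymatrix{
BO_1^m \ar^{\Delta_m}[r] \ar_{j}[d] & BO_m \ar^{f}[d] \\
BO_1^n \ar^{\Delta_n}[r] & BO_n
}
\]
where $j$ is induced by the inclusion $O_1^m \hookrightarrow O_1^n$ into the first $m$ coordinates. Geometrically, both compositions around the square correspond to sending a rank-$m$ quadratic bundle $\mathcal{E}$ to the orthogonal direct sum $\mathcal{E} \oplus (\mathcal{O}^{n-m},\sum_i x_i^2)$, so the square commutes. The vertical pullback $j^*$ is now transparent: for $k<m$, the $k$-th projection $BO_1^n \to BO_1$ restricts along $j$ to the $k$-th projection $BO_1^m \to BO_1$, giving $j^*(HW_1^{(k)}) = HW_1^{(k)}$; for $k\geq m$, the $k$-th projection factors through the basepoint $\spec K \to BO_1$, which classifies the trivial rank-$1$ form $x^2$ and therefore carries $HW_1$ to $[1]=0$ in $A^1 \cong K^\times/(K^\times)^2$. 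Hence $j^*(HW_1^{(k)}) = 0$ for $k\geq m$.

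Chasing the square with these ingredients gives
\[
\Delta_m^*(f^*HW_i) \;=\; j^*\bigl(\Delta_n^*(HW_i)\bigr) \;=\; \sigma_i(HW_1^{(0)},\dots,HW_1^{(m-1)},0,\dots,0),
\]
which for $i\leq m$ is $\sigma_i(HW_1^{(0)},\dots,HW_1^{(m-1)}) = \Delta_m^*(HW_i)$, and for $i>m$ vanishes because an elementary symmetric polynomial of degree exceeding the number of variables is zero. Cancelling $\Delta_m^*$ by injectivity yields the desired formula $f^*HW_i = HW_i$ for $i\leq m$ and $f^*HW_i = 0$ for $i>m$. The only genuinely non-formal input is the identification $j^*(HW_1^{(k)}) = 0$ for $k\geq m$, which is really the zero-dimensional computation $HW_1(\langle a\rangle)=[a]$ already recorded in the introduction, applied to $a=1$.
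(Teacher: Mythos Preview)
Your proof is correct and follows essentially the same route as the paper: both set up the commutative square with $BO_1^m \hookrightarrow BO_1^n$ and $BO_m \hookrightarrow BO_n$, use injectivity of $\Delta_m^*$, compute $j^*$ on the degree-one generators, and then reduce to the elementary-symmetric-polynomial identity $\sigma_i(x_1,\ldots,x_m,0,\ldots,0)=\sigma_i(x_1,\ldots,x_m)$ or $0$. Your justification of $j^*(HW_1^{(k)})=0$ for $k\ge m$ via $HW_1(\langle 1\rangle)=[1]=0$ is slightly more explicit than the paper's, which simply asserts the form of $j^*$ on the polynomial ring.
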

\begin{proof}
	Consider the commutative diagram
	\[
	\xymatrix{
		\HH^*_\et(BO_n,\Z2)\ar@{^{(}->}[r]\ar[d]   &    \HH^*_\et(BO_1^n,\Z2) \ar[d]\\
		\HH^*_\et(BO_m,\Z2)\ar@{^{(}->}[r]         &    \HH^*_\et(BO_1^m,\Z2). 
	}
	\]
	The right downward arrow, defined by the embedding of $BO_1^m$ to the first $m$ factors
	of $BO_1^n$, is the morphism of polynomial rings 
	\[A[x_1,\ldots,x_n]\to A[x_1,\ldots,x_m]\]
	given by sending $x_i$ to $x_i$ if $i\le m$ or to 0 otherwise.
	It remains to observe that this projection sends $\sigma_i(x_1,\ldots,x_n)$ to $\sigma_i(x_1,\ldots,x_m)$ if $i\le m$ and to 0 otherwise.
\end{proof}

\subsection{The Relative Homological Type of a Sphere Bundle}

The aim of this subsection is to calculate the relative \etale homological type for the projection $p_n\colon  Q_n^S \to Q_n$ of stacks over $K$.  

We will show the following arithmetic version of classical obstruction theoretic interpretation of the Stiefel-Whitney classes: 

\begin{theorem}
	\label{push of universal sphere}
	Let $p_n \colon  Q_n^S \to Q_n$ denote the natural projection. Then the sheaf of $\mathbb{Z}/2$ modules
	$\Z2 \otimes \Et(p_n)$ is the extension of $\Z2$ by $\Z2[n-1]$ classified by the Hasse-Witt class 
	\begin{align*}
	HW_n\in \HH^n_\et(Q_n,\Z2) &\cong \Hom_{Sh_\infty((Q_n)_\et,\Z2)}(\Z2,\Z2\left[n\right]) \cong \\ &\cong\Ext^1_{Sh_\infty((Q_n)_\et,\Z2)}(\Z2,\Z2[n-1]).
	\end{align*}
\end{theorem}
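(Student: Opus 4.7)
The strategy is to compute $\Z2\otimes\Et(p_n)$ as a two-term complex in $\Sh{Q_n,\Z2}$ and then pin down the resulting classifying extension class in $H^n_\et(BO_n,\Z2)$ as $HW_n$.

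First I would establish the shape of the complex. By Lemma~\ref{lemma: quadratic bundles are locally trivial}, $p_n$ is smooth and \'etale-locally trivial with fibre the standard geometric $(n-1)$-sphere. By Corollary~\ref{corollary: relative homotopy smooth basechange}, the $\Z2$-homology sheaves of $\Et(p_n)$ are computed locally, and equal $\underline{\Z2}$ in degrees $0$ and $n-1$ and vanish otherwise. Hence $\Z2\otimes\Et(p_n)$ fits into a fibre sequence
\[
\Z2[n-1]\longrightarrow \Z2\otimes\Et(p_n)\longrightarrow \Z2,
\]
whose equivalence class is an element
\[\alpha\in \Ext^n_{\Sh{Q_n,\Z2}}(\Z2,\Z2)\cong H^n_\et(BO_n,\Z2).\]

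Next I would constrain $\alpha$ using the standard embedding $\iota\colon BO_{n-1}\hookrightarrow BO_n$, which on universal bundles adds an orthogonal trivial line. The pulled-back sphere bundle $\iota^*p_n$ has a canonical unit section given by the new coordinate, so $\iota^*\Et(p_n)$ admits a global section, and therefore $\iota^*\alpha=0$. By the computation of $\iota^*$ on mod~$2$ cohomology recalled in \Sref{FormsAndMaps} (namely $HW_i\mapsto HW_i$ for $i<n$ and $HW_n\mapsto 0$), the kernel of $\iota^*$ in degree $n$ is precisely $\Z2\cdot HW_n$. Hence $\alpha\in\{0,HW_n\}$. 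To exclude $\alpha=0$ I would pull back along the diagonal $\Delta\colon BO_1^n\to BO_n$; by Corollary~\ref{corollary: relative homotopy smooth basechange}, $\Delta^*\alpha$ is the classifying extension of the sphere bundle of an orthogonal sum of $n$ universal lines, while by Corollary~\ref{WhitneyProductFormula}, $\Delta^*HW_n=HW_1^{(0)}\cup\cdots\cup HW_1^{(n-1)}$, which is non-zero. So it suffices to show that $\Delta^*\alpha$ equals this cup product.

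For the remaining cup product identification, the base case $n=1$ is immediate: over $BO_1$ the unit sphere of $ax^2$ is the $\mu_2$-torsor classified by $[a]=HW_1\in H^1(BO_1,\Z2)$. For $n\ge 2$ I would establish a multiplicativity statement asserting that the extension class of the sphere bundle of an orthogonal sum of quadratic bundles is the cup product of the extension classes of its summands; this is deduced from Corollary~\ref{corollary: shrieck commutes with tensor}, smooth base change, and a K\"unneth-type analysis of $\Z2\otimes\Et$ applied to the universal sphere bundle over $BO_1^n$.

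The main obstacle is this last multiplicativity step: the geometric fibre of the sphere bundle of a direct sum is \emph{not} the product of the fibres of the summands, yet the classifying element of its relative homological type must still behave as a cup product, in analogy with the classical fact that the top Stiefel--Whitney class of a direct sum of line bundles is the cup product of their first classes. Translating that intuition into an argument inside $\ProSh{BO_1^n,\Z2}$ requires careful bookkeeping of the Beck--Chevalley conditions and tensor-compatibility results assembled in Section~\ref{EtTopType}.
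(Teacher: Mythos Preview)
Your argument that $\Z2\otimes\Et(p_n)$ sits in a fibre sequence $\Z2[n-1]\to\Z2\otimes\Et(p_n)\to\Z2$ is correct and follows the same line as the paper; the paper is only more explicit about passing from schemes to the stack $Q_n$, using the colimit formula of Theorem~\ref{theorem: formula for push from colimit} together with smooth base change and the fact that $\Z2$ has no nontrivial automorphisms.

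Where you diverge is in pinning down the extension class. Your reduction $\alpha\in\{0,HW_n\}$ via $\iota^*\alpha=0$ is fine, but the multiplicativity step you flag as the ``main obstacle'' is entirely avoidable, and the paper does not go near it. Instead, the paper applies $\Hom_{Q_n}(-,\Z2)$ to the fibre sequence just established and reads off the resulting long exact sequence. By adjunction, $\Hom_{Q_n}(\Z2\otimes\Et(p_n),\Z2)\cong\Hom_{Q_n^S}(\Z2,\Z2)$, so one of the maps in the long exact sequence is precisely the restriction
\[
p_n^*\colon H^n_\et(Q_n,\Z2)\longrightarrow H^n_\et(Q_n^S,\Z2)\cong H^n_\et(Q_{n-1},\Z2),
\]
which is reduction mod $HW_n$. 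The preceding term is $\pi_{n-1}\Hom_{Q_n}(\Z2[n-1],\Z2)\cong\Z2$, and the boundary $\delta\colon\Z2\to H^n_\et(Q_n,\Z2)$ sends $1$ to the extension class $\alpha$. Now exactness says $\operatorname{im}(\delta)=\ker(p_n^*)=\Z2\cdot HW_n$; since the source of $\delta$ is $\Z2$, this forces $\alpha=\delta(1)=HW_n$.

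In other words, the very same long exact sequence that gave you $\alpha\in\ker(\iota^*)$ also gives, by exactness on the other side, that $\alpha$ \emph{generates} $\ker(\iota^*)$. That single observation replaces your entire pullback to $BO_1^n$ and the would-be multiplicativity argument; no analysis of spheres of direct sums is needed.
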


The proof of the first part of this result, 
that $\Z2\otimes\Et(p_n)$ is an extension as above, 
is done by a d\v{e}vissage argument starting from the homotopy type of the standard sphere, 
going through a trivial bundle over a scheme, 
and then proven for $Q_n$ by writing it as a colimit of schemes with a trivial quadratic bundle.

We start by analyzing the case of the trivial sphere bundle over the point. 

\begin{proposition}
	\label{homotopy type of the sphere}
	The fibre of the terminal map from the relative homological realization over $K$ of the standard sphere $\Et_{/K}(S^{n-1})$
	to $\Z2$ is
	\[
	\Z2[n-1] \to \Et_{/K}(\Z2\otimes S^{n-1}) \to \Z2.
	\]
\end{proposition}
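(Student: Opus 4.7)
The plan is to identify $\Z2\otimes\Et_{/K}(S^{n-1})$ as a two-term extension
\[\Z2[n-1]\to \Z2\otimes\Et_{/K}(S^{n-1})\to \Z2\]
in $\pro(\Sh{\spec K_{\et}, \Z2})$, from which the fiber of the augmentation is automatically $\Z2[n-1]$.

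First I would invoke the smooth base-change result (Corollary \ref{corollary: relative homotopy smooth basechange}) for the pullback square
\[
\xymatrix{
S^{n-1}_{\bar K}\ar[r]\ar[d]_{p_{\bar K}} & S^{n-1}\ar[d]^{p} \\
\spec \bar K \ar[r]^{g} & \spec K,
}
\]
yielding $g^*|p|_{\Z2}\cong |p_{\bar K}|_{\Z2}$. Since $g^*$ is the functor that forgets the $G_K$-action on a pro-$\Z2$-complex, this reduces the question to a computation over $\bar K$, plus a check that the residual Galois action on the cohomology groups is trivial.

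Next I would carry out the \etale cohomology computation for the sphere over $\bar K$: namely, that $\HH^i_{\et}(S^{n-1}_{\bar K}, \Z2) = \Z2$ for $i \in \{0, n-1\}$ and $0$ otherwise. The argument proceeds by induction on $n$ using the Gysin long exact sequence for the smooth closed embedding of the equator $S^{n-2}_{\bar K} \hookrightarrow S^{n-1}_{\bar K}$ cut out by $x_{n-1}=0$, combined with $\AA^1$-homotopy invariance applied to the open complement $\{x_{n-1}\neq 0\}$, which splits \etale-locally over $\bar K$ as a disjoint union of two affine spaces. For $n=1$ the base case $S^0 = \spec\bar K\sqcup\spec\bar K$ directly gives $\HH^0 = \Z2\oplus\Z2$, compatible with the statement.

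To see the Galois action is trivial, note that $G_K$ acts trivially on $\Z2 = \mu_2$, hence trivially on $\HH^0$, while the top class generating $\HH^{n-1}$ is Poincar\'e-dual to the $K$-rational point $(1,0,\ldots,0) \in S^{n-1}(K)$, and hence is $G_K$-invariant. It then follows that $|p|_{\Z2}$ is a two-term extension of $\Z2$ by $\Z2[n-1]$ in the derived category of continuous $G_K$-modules, and consequently the fiber of the augmentation to $\Z2$ is $\Z2[n-1]$, as required. The main technical obstacle is the inductive Gysin-based cohomology computation over $\bar K$; this is classical material but must be handled with care, in particular to ensure homotopy invariance for the affine pieces appearing at each inductive step.
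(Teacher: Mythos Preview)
Your overall strategy---reduce to $\bar K$, compute the \'etale cohomology of the affine quadric there, and check that the Galois action on the cohomology is trivial---is the same as the paper's. But two of your steps have genuine problems.

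First, the Gysin argument as stated does not work. The open complement $\{x_{n-1}\neq 0\}\subset S^{n-1}_{\bar K}$ is \emph{not} a disjoint union of two affine spaces. This is topological intuition (the two open hemispheres) that fails in algebraic geometry: the set $\{x_{n-1}\neq 0\}$ is irreducible, a connected \'etale double cover of $\{1-\sum_{i<n-1}x_i^2\neq 0\}\subset\AA^{n-1}$, since $1-\sum_{i<n-1}x_i^2$ is not a square in the relevant coordinate ring. For instance when $n=2$, you computed elsewhere that $S^1_{\bar K}\cong\mathbb{G}_m$, and $\{x_1\neq 0\}$ is then $\mathbb{G}_m\setminus\{\pm 1\}$, certainly not two copies of $\AA^1$. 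So the inductive step collapses. The paper sidesteps this entirely by citing \cite[Exp.~XII]{SGA7-II} for the cohomology of affine quadrics.

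Second, your argument for triviality of the Galois action via Poincar\'e duality to a $K$-rational point is shaky because $S^{n-1}$ is affine, not proper; the usual duality statement does not apply directly. The paper's argument here is both simpler and cleaner: since $\Aut(\Z2)=1$, \emph{any} $G_K$-action on a group abstractly isomorphic to $\Z2$ is trivial, so once you know each cohomology group is $\Z2$ the Galois-triviality is automatic. This observation also removes the need to invoke smooth base-change to $\spec\bar K$ (which is awkward anyway, as $\spec\bar K$ is not of finite type over $K$): one only needs that the fiber $\mathcal{G}$ becomes $\Z2[n-1]$ after base-change, and the absence of automorphisms of $\Z2$ forces $\mathcal{G}\cong\Z2[n-1]$ already over $K$.
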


\begin{proof}
	Consider the fibre $\mathcal{G}$ of the terminal map $\Et_{/K}(\Z2\otimes S^{n-1}) \to \Z2$. 
	We shall show that $\mathcal{G}$ is isomorphic to $\Z2[n-1]$. 
	Since the group $\Z2$ has no non trivial automorphisms, it suffices to check that $\mathcal{G}\cong\Z2[n-1]$ after base change to the separable closure $\overline{K}$.
	This result will follow if we can show that the \etale homology of $\overline{\mathcal{G}}$ is concentrated in degree $n-1$, and equal to $\Z2$ at this degree.
	
	For the calculation of the \etale homology groups over $\overline{K}$, it is enough to
	calculate the \etale cohomology groups of the sphere $\overline{S^{n-1}}$, by the universal coefficient theorem.
	In \cite[Exp. XII]{SGA7-II} it is proven that the reduced \etale cohomology groups of the spheres with coefficients in $\ZZ_2$ consists of one copy of $\Z2$ concentrated in degree $n-1$, and thus it also follows for \etale  homology with coefficients in $\Z2$ by the universal coefficients theorem.
\end{proof}

The next case is for trivial bundles over $K$-schemes. 
\begin{proposition}
	\label{proposition: case of trivial bundle}
	Let $X$ be a scheme over $K$, and let $(\mathcal{E},B)$ be a trivial quadratic bundle of rank $n$ on $X$. 
	Let $p_B \colon  S_B(\Epsilon) \to X$ denote the projection from the sphere bundle of $B$ to the 
	base $X$. Then the fiber of the natural map $\Z2 \otimes \Et(p_B) \to \Z2$ is equivalent to the constant sheaf 
	$\Z2[n-1]$. 
\end{proposition}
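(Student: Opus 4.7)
The plan is to reduce the statement to \Pref{homotopy type of the sphere} via smooth base change. Since $(\mathcal{E},B)$ is a trivial quadratic bundle of rank $n$ on $X$, the sphere bundle $S_B(\mathcal{E}) \to X$ is canonically isomorphic, as a scheme over $X$, to the pullback of the standard sphere $S^{n-1} \to \spec K$ along the structure map $g\colon X \to \spec K$. That is, we have a pullback square
\[
\xymatrix{
S_B(\mathcal{E}) \ar[r] \ar^{p_B}[d] & S^{n-1} \ar^{q}[d] \\
X \ar^{g}[r] & \spec K,
}
\]
in which $q$ is smooth (a sphere over a field of characteristic $\ne 2$).

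Applying \Cref{corollary: relative homotopy smooth basechange} with $\Lambda = \Z2$, which is permissible because $\mathrm{char}(K)\ne 2$, yields a canonical equivalence
\[
g^*\,|q|_{\Z2} \;\cong\; |p_B|_{\Z2}
\]
in $\ProSh{X_\et,\Z2}$. By \Cref{corollary: shrieck commutes with tensor}, these two pro-sheaves are identified with $g^*(\Z2 \otimes \Et_{/K}(S^{n-1}))$ and $\Z2 \otimes \Et(p_B)$ respectively, so smooth base change reduces the problem to computing the fiber of $g^*$ applied to the terminal map $\Z2 \otimes \Et_{/K}(S^{n-1}) \to \Z2$.

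By \Pref{homotopy type of the sphere}, the latter sits in a fiber sequence
\[
\Z2[n-1] \;\to\; \Z2 \otimes \Et_{/K}(S^{n-1}) \;\to\; \Z2
\]
in $\ProSh{(\spec K)_\et,\Z2}$. The functor $g^*\colon \ProSh{(\spec K)_\et,\Z2} \to \ProSh{X_\et,\Z2}$ is exact: it is the prolongation of the inverse image of a geometric morphism between stable $\infty$-categories of sheaves of $\Z2$-modules, hence preserves finite limits (and in particular fiber sequences). Moreover, $g^*$ carries the constant sheaf $\Z2$ to the constant sheaf $\Z2$ on $X$ and commutes with the shift, so we obtain a fiber sequence
\[
\Z2[n-1] \;\to\; \Z2 \otimes \Et(p_B) \;\to\; \Z2
\]
in $\ProSh{X_\et,\Z2}$, which is exactly the desired description of the fiber.

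The only point requiring care is the very first step: one must make the identification $S_B(\mathcal{E}) \cong X \times_K S^{n-1}$ fully explicit using a chosen trivialization of $(\mathcal{E},B)$, and verify that under this identification the projection to $X$ coincides with $p_B$. Once the pullback square is in place the rest is formal, as the heavy lifting is absorbed into the smooth base change theorem \Tref{theorem: Smooth Basechange for stacks} and the sphere computation already established in \Pref{homotopy type of the sphere}.
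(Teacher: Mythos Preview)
Your proof is correct and follows essentially the same approach as the paper: both identify $S_B(\mathcal{E}) \cong X \times_K S^{n-1}$ using the triviality hypothesis, invoke smooth base change (Corollary~\ref{corollary: relative homotopy smooth basechange}) along the smooth map $S^{n-1}\to \spec K$, and then conclude from Proposition~\ref{homotopy type of the sphere} using that $g^*$ preserves fiber sequences and constant sheaves. Your write-up is in fact somewhat more explicit than the paper's in justifying the exactness of $g^*$ and the identification via Corollary~\ref{corollary: shrieck commutes with tensor}.
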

\begin{proof}
	In this case $S_B(\Epsilon) \cong X \times S^{n-1}$ as a scheme over $X$. 
	Thus, the problem reduces to Proposition \ref{homotopy type of the sphere} via the 
	smooth base change theorem \ref{prop: smooth bc for schemes}, as we now explain. 
	Consider the diagram
	\[
	\xymatrix{
		X\times S^{n-1} \ar^g[r]  \ar^q[d]  &   S^{n-1} \ar^p[d]  \\
		X               \ar^f[r]          &   \ast            .
	}
	\]
	
	Note that the map $p$ is smooth, as $\text{char}(K)\ne 2$, and thus the smooth base change theorem holds in this case. 
	
	By Corollary \ref{corollary: relative homotopy smooth basechange} we get
	\[
	\Et(q) \otimes \mathbb{Z}/2 \cong f^*\Et(p) \otimes \mathbb{Z}/2.
	\]
	
	This concludes the reduction, as $f^*$ preserves fibre sequences and sends constant sheaves to constant sheaves.
\end{proof}

Finally, we are ready to consider the relative homological type of the universal sphere bundle.
First, we show that the sheaf $\Z2 \otimes \Et{(p_n)}$ is indeed an extension 
of $\Z2[n-1]$ by $\Z2$. 
\begin{proposition}
	\label{case of Qn}
	Let $p_n \colon  Q_n^S \to Q_n$ denote the natural projection. Then 
	$\Z2 \otimes \Et(p_n)$ is an extension of $\Z2$ by $\Z2[n-1]$. 
\end{proposition}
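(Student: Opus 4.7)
The plan is to reduce the statement to Proposition \ref{proposition: case of trivial bundle} by presenting $Q_n \cong BO_n$ as a colimit of schemes over which the universal sphere bundle trivializes, and then transporting the extension structure through this colimit using Corollary \ref{corollary: formula for relative type colimit} together with stability of the relevant $\infty$-categories.

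First, I would use the standard simplicial presentation $BO_n \cong \colim_{\Delta^{op}} O_n^{\bullet}$ (the bar construction), which exhibits $Q_n$ as a colimit of smooth schemes over $K$ in $\Sh{Sch_{/K}}$. Pulling back $p_n$ along the canonical maps $O_n^k \to BO_n$ gives a simplicial morphism $p_\bullet \colon E^S_\bullet \to O_n^\bullet$ where each $p_k$ is the projection of a \textit{trivial} sphere bundle $O_n^k \times S^{n-1} \to O_n^k$; triviality is immediate since pulling back $EO_n \to BO_n$ along the $k$-th face of the bar construction yields the trivial principal $O_n$-bundle, and the associated $S^{n-1}$-bundle is therefore trivial as well. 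By Corollary \ref{corollary: formula for relative type colimit} applied to $p_\bullet$, and by Corollary \ref{corollary: shrieck commutes with tensor} which lets us commute the $\Z2$-tensoring with $\sharp$, we obtain
\[
\Z2 \otimes \Et(p_n) \;\cong\; \tilde{\nu}_\sharp \bigl(\Z2 \otimes |p_\bullet|\bigr)
\]
as pro-sheaves of $\Z2$-modules on $Q_n$.

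Next, I would apply Proposition \ref{proposition: case of trivial bundle} level-wise. For each $k$, the fiber sequence of $\Z2$-module sheaves
\[
\Z2[n-1] \;\longrightarrow\; \Z2 \otimes \Et(p_k) \;\longrightarrow\; \Z2
\]
over $O_n^k$ is natural in the trivial bundle, and hence assembles into a fiber sequence of op-lax limit objects
\[
\Z2[n-1] \;\longrightarrow\; \Z2\otimes|p_\bullet| \;\longrightarrow\; \Z2
\]
in $\oplaxlim \pro(\Sh{(O_n^\bullet)_\et,\Z2})$, where the outer terms are the constant (simplicial) sheaves. Because $\Sh{\mathfrak{Y},\Z2}$ is stable for every $\mathfrak{Y}$, the pro-category and the op-lax limit are stable as well, and the left adjoint $\tilde{\nu}_\sharp$ preserves cofiber sequences. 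Applying $\tilde{\nu}_\sharp$ and using that it sends the constant sheaves $\Z2$ and $\Z2[n-1]$ on the simplicial diagram back to the constant sheaves $\Z2$ and $\Z2[n-1]$ on $Q_n$ (this is essentially the case where the relative type of $\mathrm{id}$ is the terminal object, or alternatively Corollary \ref{corollary: formula for relative type colimit} applied to the identity morphism of $Q_n$), yields the required extension
\[
\Z2[n-1] \;\longrightarrow\; \Z2\otimes\Et(p_n) \;\longrightarrow\; \Z2.
\]

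The part I expect to require the most care is the last step: checking that $\tilde{\nu}_\sharp$ sends the constant simplicial sheaf $\Z2$ (respectively $\Z2[n-1]$) to the constant sheaf $\Z2$ (respectively $\Z2[n-1]$) on $Q_n$. This amounts to saying that the relative homological type of the identity $Q_n \to Q_n$ is the unit, which follows formally from the adjunction $\tilde{\nu}_\sharp \dashv \tilde{\nu}^*$ together with the fact that $\tilde{\nu}^*$ sends the constant sheaf on $Q_n$ to the constant simplicial sheaf on $O_n^\bullet$; the rest is bookkeeping with the stable structure.
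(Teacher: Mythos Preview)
Your approach is essentially the paper's: present $Q_n$ as the colimit of the bar construction $O_n^\bullet$, use Corollary~\ref{corollary: formula for relative type colimit} (equivalently Theorem~\ref{theorem: formula for push from colimit}) to compute $\Z2\otimes\Et(p_n)$ as $\tilde{\nu}_\sharp$ of a level-wise object, take the fiber of the augmentation, and push it back down. Your handling of the last step---that $\tilde{\nu}_\sharp\tilde{\nu}^*\Z2\cong\Z2$ via the identity case of Theorem~\ref{theorem: formula for push from colimit}---matches the paper exactly.

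There is, however, a genuine gap in the middle. You assert that the level-wise fiber sequences ``assemble'' into a fiber sequence in the op-lax limit with \emph{constant} outer terms, appealing only to an unspecified ``naturality in the trivial bundle.'' The right-hand term $\tilde{\nu}^*\Z2$ is constant for free, but the fiber $F$ is a priori only an object of $\oplaxlim\pro\Sh{O_n^\bullet,\Z2}$ whose value at each $[k]$ is $\Z2[n-1]$; its transition maps are induced by the Beck--Chevalley maps of the middle term and need not be isomorphisms. The paper fills this in two moves you omit: first, the Smooth Base-Change Theorem~\ref{theorem: Smooth Basechange for stacks} (via Proposition~\ref{theorem: BC map then has left adjoint}) shows that $\Z2\otimes(p_n)_{lax,\sharp}(\tilde{\rho}^S)^*\ast$ actually lies in the strict limit $\invlim\pro\Sh{O_n^\bullet,\Z2}$, hence so does $F$; second, since $\Z2$ has no nontrivial automorphisms, an object of the strict limit with constant value $\Z2[n-1]$ must be $\tilde{\nu}^*\Z2[n-1]$. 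Without the first move the transition maps of $F$ could in principle be zero, and without the second you cannot conclude constancy even knowing they are isomorphisms. Once you insert these two observations your argument is complete and coincides with the paper's.
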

\begin{proof} 
	Let $O_n^{\bullet} :\Delta^{op} \to Sch_{/K}$  denote the simplicial diagram giving the Milnor realization of $BO_n$. Let $p_n^\bullet: O_{n-1}^\bullet \to O_n^{\bullet}$ denote the standard inclusions. Then 
	$p_n: Q_n^S \to Q_n$ is isomorphic to the colimit $\colim p_n^\bullet$.
	
	Let $\rho_\bullet: O_n^\bullet \to Q_n$ and $\rho^S_\bullet: O_{n-1}^\bullet \to Q_n^S$  denote the comparison maps. 
	Let $\tilde{\rho}^* \colon \Sh{(Q_n)_\et} \adj \oplaxlim \Sh{O_n^\bullet} \colon\tilde{\rho}_\sharp$ be the induced map on sheaves (as in the proof of Theorem \ref{theorem: formula for push from colimit}). Similarly, for $\rho^S$ let  $(\tilde{\rho}^S)^*$ and $\tilde{\rho}^S_\sharp$ be the corresponding adjoint functors. 
	
	It follows from Theorem \ref{theorem: formula for push from colimit} that the natural map 
	$\mu:\tilde{\rho}_{\sharp} (p_n)_{lax,\sharp}  (\tilde{\rho}^S)^* \to (p_n)_{\sharp}$ is an equivalence at $\ast_{Q_n^S}$, namely 
	\[
	\mu\colon \tilde{\rho}_{\sharp}  (p_n)_{lax,\sharp} (\tilde{\rho}^S)^* \ast_{Q_n^S} \stackrel{\sim}{\to} (p_n)_{\sharp} \ast_{Q_n^S} = \Et(Q_n^S).
	\]
	
	The object $\tilde{\rho}^* \ast_{Q_n}$ is final in $\oplaxlim \pro\Sh{O_{n}^\bullet}$ and hence there is a unique map 
	$Aug: (p_n)_{lax,\sharp} (\tilde{\rho}^S)^* \ast_{Q_n^S} \to \tilde{\rho}^* \ast_{Q_n}$ in $\oplaxlim \pro Sh_\infty(O_n^\bullet)$ 
	which restrict to the terminal map $\Et(p_n^{[k]})\to \ast_{O_n^{[k]}}$ at every $[k] \in \Delta^{op}$. 
	Tersoring $Aug$ with $\mathbb{\ZZ}/2$ we obtain a map 
	\[
	Aug_{\mathbb{Z}/2}:(p_n)_{lax,\sharp} (\tilde{\rho}^S)^* \mathbb{Z}/2\cong \mathbb{Z}/2 \otimes (p_n)_{lax,\sharp} (\tilde{\rho}^S)^* \ast_{Q_n^S} \stackrel{Aug \otimes \mathbb{Z}/2}{\to} \mathbb{Z}/2 \otimes \tilde{\rho}^* \ast_{Q_n} \cong \tilde{\rho}^* \mathbb{Z}/2  
	\]
	where the first and last equivalences are the ones in  \ref{corollary: shrieck commutes with tensor}.
	Let $F$ denote the fiber of $Aug_{\mathbb{Z}/2}$, so we have a fiber sequence 
	\[
	\xymatrix{
		F\ar[r] & (p_n)_{lax,\sharp} (\tilde{\rho}^S)^* \mathbb{Z}/2 \ar^(.6){Aug_{\mathbb{Z}/2}}[r]&   \tilde{\rho}^* \mathbb{Z}/2\\
	}
	.\]
	Applying $\tilde{\rho}_\sharp$ to this sequence we get a fiber sequence 
	\[
	\xymatrix{
		\tilde{\rho}_{\sharp} F\ar[r] & \tilde{\rho}_{\sharp} (p_n)_{lax,\sharp} (\tilde{\rho}^S)^* \mathbb{Z}/2 \ar^(.6){Aug_{\mathbb{Z}/2}}[r]&   \tilde{\rho}_{\sharp} \tilde{\rho}^* \mathbb{Z}/2\\
	}
	.\]
	
	The result will now follow from this fiber sequence once we show that 
	$\tilde{\rho}_{\sharp} \tilde{\rho}^* \mathbb{Z}/2 \cong \mathbb{Z}/2$ in $\pro \Sh{Q_n,\mathbb{Z}/2}$ and that 
	$\tilde{\rho}_\sharp F \cong \mathbb{Z}/2[n-1]$. 
	
	For the first, let $id: Q_n \to Q_n$ denote the identity map, we get from Theorem \ref{theorem: formula for push from colimit} that 
	\[\mu: id = id_\sharp \to \tilde{\rho}_{\sharp} id_\sharp \tilde{\rho}^* \cong \tilde{\rho}_\sharp \tilde{\rho}^*\] is an equivalence at $\ast_{Q_n}$ and hence it is also an equivalence after taking the tensor product with $\mathbb{Z}/2$. 
	
	We pass the the second equivalence $\tilde{\rho}_\sharp F \cong \mathbb{Z}/2[n-1]$.
	Note that by the Smooth Base-change Theorem (\ref{theorem: Smooth Basechange for stacks}), after tensoring with $\mathbb{Z}/2$ the object $(p_n)_{lax,\sharp} (\tilde{\rho}^S_{lax})^* \ast_{Q_n^S}$ lies in the limit.
	In other words, by Theorem \ref{theorem: BC map then has left adjoint}, this object lies in the full-subcategory $\invlim \pro\Sh{O_n^\bullet,\mathbb{Z}/2}$ of $\oplaxlim \pro\Sh{O_n^\bullet,\mathbb{Z}/2}$. 
	Since the same is true for $\tilde{\rho}^* \mathbb{Z}/2$, we see that $F$ lies also in $\invlim \pro\Sh{O_n^\bullet,\mathbb{Z}/2}$.
	Moreover, by Proposition \ref{proposition: case of trivial bundle}, the value of $F$ at every $O_n^{[k]}$ is equivalent to $\mathbb{Z}/2[n-1]$. 
	In particular, $F[1-n]$ has value $\mathbb{Z}/2$ at every object $O_n^{[k]}$,
	and hence it is equivalent to $\tilde{\rho}^*\mathbb{Z}/2$ since $\mathbb{Z}/2$ has no nontrivial automorphisms. Hence 
	\[\tilde{\rho}_{\sharp} F \cong \tilde{\rho}_{\sharp} \tilde{\rho}^*\mathbb{Z}/2[n-1] \cong \mathbb{Z}/2[n-1],\] using again the equivalence $\tilde{\rho}_{\sharp} \tilde{\rho}^* \cong id$ at objects of $\Sh{(Q_n)_\et}$.    
\end{proof}

To conclude the proof of Theorem \ref{push of universal sphere}, 
we will see that this extension is classified by $HW_n$. 
Consider the fiber sequence 
\begin{equation}
\label{fiberSequenceForC}
\Z2[n-1] \to \Z2\otimes\Et(p_n) \to \Z2.
\end{equation}
Recall that sheaf cohomology with coefficients in $\Z2$ is computed by applying $\Hom_{Sh_\infty((Q_n)_\et,\Z2)}(-,\Z2)$, abbreviated by $\Hom_{Q_n}(-,\Z2)$, on the abelian sheaf
and taking the corresponding homotopy group ($\Z2$-modules) of the resulting Hom-spaces.
Applying the above to the fiber sequence \ref{fiberSequenceForC}, we get a long exact sequence of the form 
\begin{align}
\label{longExactSequenceForQn}
\cdots &\to \pi_{n-1}(\Hom_{Q_n}(\Z2[n-1],\Z2)) \to \pi_n(\Hom_{Q_n}(\Z2,\Z2)) \to\\
&\to \pi_n(\Hom_{Q_n}(\Z2 \otimes \Et{(p_n)},\Z2)) \to \cdots \nonumber
\end{align}
However, by adjunction we have 
\[
\Hom_{Q_n}(\Z2 \otimes \Et{(p_n)},\Z2) \cong \Hom_{Q_n^S}(\Z2,\Z2) 
\]
and the resulting map 
\begin{align*}
\HH_\et^n(Q_n,\Z2) &= \pi_n(\Hom_{Q_n}(\Z2,\Z2)) \to\\
&\to \pi_n(\Hom_{Q_n}(\Z2 \otimes \Et{(p_n)},\Z2))\cong\\
&\cong \pi_n(\Hom_{Q_n^S}(\Z2,\Z2)) \cong \\
&\cong \HH_\et^n(Q_n^S,\Z2)
\end{align*}
corresponds to the pullback map induced on cohomology by $p_n$. 

Recall that 
\[
\HH_\et^*(Q_n,\Z2) \cong A[HW_1,\ldots,HW_n]
\]
while 
\[
\HH_\et^*(Q_{n}^S,\Z2) \cong \HH_\et^*(Q_{n-1},\Z2) \cong A[HW_1,\ldots,HW_{n-1}].
\]
Moreover, the induced map on cohomology, $p_n^*$, is 
given by reduction modulo the ideal generated by $HW_n$. 
In particular, the kernel of $p_n^*$ in degree $n$ is
one dimensional and generated by $HW_{n}$ as a subspace. 

Also, we clearly have 
\[\pi_{n-1}(\Hom_{Q_n}(\Z2[n-1],\Z2)) \cong \pi_0(\Hom_{Q_n}(\Z2,\Z2)) \cong \Z2\]
since $Q_n$ is connected. 

Thus, the portion on the long exact sequence \eqref{longExactSequenceForQn} presented above is isomorphic to 
\[
\ldots \to \Z2 \stackrel{\delta}{\to} (A[HW_1,\ldots,HW_n])^n \xrightarrow{\text{mod }HW_n}
(A[HW_1,\ldots,HW_{n-1}])^n \to \ldots 
\]
and exactness of the sequence now forces the map $\delta$ to be given by $\delta(1) = HW_n$.  
It follows that the extension \eqref{fiberSequenceForC} is classified by $HW_n$, proving Theorem \ref{push of universal sphere}.

\section{Obstructions for Arithmetic Spheres}
\label{ObstructionsForSpheres}
Carrying on to compute the obstruction classes, we shall start with dimensions 0,1.
These obstructions are computed directly using simple computations interesting in their own right, and can be read independently of the rest of this paper.

After that the computation for dimensions $\ge 2$ is carried on using the full machinery developed in the sections above.
It is interesting to note that the theorems proved earlier in this paper could be 
extended to allow for a proof of low dimensions, 
as the theory of gerbes behaves properly in low dimensions for \emph{abelian} sheaves and we are dealing with the homological obstruction.

\subsection{Obstruction Theory for Sphere Bundles of Dimensions 0,1}
The computations below only rely on the definitions given in \cite{HarpazSchlank}. 
Rigour will be diminished for the benefit of clarity and brevity. 
\begin{proposition}[Case of 0-dimensional sphere]
	Let $a\in K^\times$.
	The first \etale $\Z2$-homology obstruction for the existence of a rational point on $ax^2=1$
	is precisely the class $[a]$.
\end{proposition}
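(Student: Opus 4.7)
The plan is to reduce the computation directly to Kummer theory. First I would observe that $X = \spec K[x]/(ax^2-1)$ is \'etale over $K$: the derivative $2ax$ is invertible wherever $ax^2 = 1$, since $\mathrm{char}(K)\neq 2$. If $a$ is a square in $K^\times$ then $X$ has a rational point and all obstructions vanish, in agreement with $[a]=0$; otherwise $X \cong \spec K(\sqrt{a})$ is a non-split $\mu_2$-torsor, classified in $H^1_\et(K,\mu_2) \cong K^\times/(K^\times)^2$ by the class $[a]$. Since $f\colon X\to \spec K$ is \'etale, the relative \'etale homotopy type $\Et(f) = f_\sharp(\ast_X)$ is the sheaf on $(\spec K)_\et$ represented by $X$ itself, namely the two-element Galois set $\Gamma_K/\Gamma_{K(\sqrt a)}$ with its swap action.

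By Corollary \ref{corollary: shrieck commutes with tensor} the relative homological type $|f|_{\Z2}$ is then the free $\Z2$-module sheaf $\Z2[X]$ on this Galois set, equivalently $\mathrm{Ind}_{\Gamma_{K(\sqrt a)}}^{\Gamma_K}(\Z2)$. The augmentation $\epsilon\colon \Z2[X]\to \Z2$ fits into a short exact sequence of \'etale sheaves
\[ 0 \to \Z2 \to \Z2[X] \xrightarrow{\epsilon} \Z2 \to 0, \]
the kernel being the constant sheaf $\Z2$ because its stalkwise generator $[+]-[-] = [+]+[-]$ (the identification holding in characteristic $2$) is fixed by the Galois swap of the two geometric points. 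Extending the gerbe-based formalism of Section \ref{RelativeObstruction} to $n=0$ for abelian coefficient sheaves, as allowed by the remark opening that section, Proposition \ref{EquivalenceOfHomologicalObstructions} identifies the first homological obstruction with the extension class of this sequence in $\Ext^1_{\Sh{K_\et,\Z2}}(\Z2,\Z2) \cong H^1(K,\Z2) \cong K^\times/(K^\times)^2$.

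It remains to recognize this extension class as $[a]$. I would read it off from the long exact sequence
\[ H^0(K,\Z2[X]) \to H^0(K,\Z2) \xrightarrow{\delta} H^1(K,\Z2) \to H^1(K,\Z2[X]), \]
whose connecting map $\delta$ sends $1$ to the class of the sequence. By Shapiro's lemma the right-hand group is $H^1(K(\sqrt a),\Z2)$, compatibly with identifying the last arrow as the Kummer restriction $K^\times/(K^\times)^2 \to K(\sqrt a)^\times/(K(\sqrt a)^\times)^2$. This restriction has kernel the order-two subgroup generated precisely by $[a]$ (by Kummer theory), so exactness forces $\delta(1)=[a]$. The main subtlety I anticipate is bookkeeping the identifications --- verifying that the Kummer class of the torsor $ax^2=1$ is indeed $[a]$ rather than $[a^{-1}]$ (harmless since the group has exponent two), and confirming that the low-dimensional extension of the gerbe theory does reduce to the naive extension class of the augmentation sequence as asserted by Proposition \ref{EquivalenceOfHomologicalObstructions}.
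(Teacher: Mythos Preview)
Your argument is correct but takes a different route from the paper's. The paper computes this case directly and elementarily: working in the framework of \cite{HarpazSchlank}, the relative \'etale homotopy type of $X$ is the two-point Galois set $\{\sqrt{a},-\sqrt{a}\}$, and the obstruction is read off as the explicit Kummer $1$-cocycle $\sigma \mapsto \sigma\sqrt{a}$, which is visibly the class $[a]$ after passing to $\Z2$-coefficients.

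You instead invoke the sheaf-theoretic machinery of Sections~\ref{EtTopType}--\ref{RelativeObstruction}: you identify $|f|_{\Z2}$ with the induced module $\Z2[X]$, recognize the augmentation as a short exact sequence $0\to\Z2\to\Z2[X]\to\Z2\to 0$, and then pin down the extension class via the long exact sequence, Shapiro's lemma, and the fact that the kernel of the Kummer restriction to $K(\sqrt{a})$ is exactly $\{0,[a]\}$. The exactness step is sound: since $\mathrm{im}(\delta)$ equals that kernel and the source of $\delta$ is $\Z2$, the map $\delta$ is a bijection onto $\{0,[a]\}$, forcing $\delta(1)=[a]$ when $a$ is not a square. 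Your approach has the virtue of paralleling the argument used in Section~\ref{FormsAndMaps} for the universal bundle over $Q_n$ (fiber sequence, long exact sequence, identification of the boundary map), at the cost of relying on the low-dimensional extension of the gerbe formalism that the paper explicitly set aside. The paper's proof, by contrast, is self-contained and makes the cocycle completely explicit, which is the authors' stated intent for these low-dimensional cases.
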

\begin{proof}
	As in \cite{HarpazSchlank}, the relative \etale homotopy type is given by taking the absolute \etale
	homotopy type after base change to an algebraic closure, and remembering the Galois action.
	Then, the homotopy obstruction is obtained by the usual obstruction for the existence of a fixed point
	for the Galois action.
	
	In our case, the \etale homotopy type is equivalent to 2 points with a Galois action
	similar to the action on $\{\sqrt{a},-\sqrt{a}\}$. 
	Thus, the homotopy obstruction is given by arbitrarily selecting a point and considering the
	map
	\begin{align*}
	\Gamma_K    &\to        \{\sqrt{a},-\sqrt{a}\} \\
	\sigma      &\mapsto    \sigma \sqrt{a},
	\end{align*}
	which is indeed  an element in $\HH^1(\spec K, \{\sqrt{a},-\sqrt{a}\})$.
	Therefore, after passing to the homology obstruction, 
	we get exactly the morphism $\Gamma_K\to\Z2$ corresponding to the class $[a]$.
\end{proof}

\begin{proposition}[Case of 1-dimensional sphere]
	Let $a,b\in K^\times$.
	The second \etale $\Z2$-homology obstruction for the existence of a rational point on 
	the sphere $X$ defined by the quadratic equation $ax^2+by^2=1$
	is precisely the class $[a]\cup [b]$.
\end{proposition}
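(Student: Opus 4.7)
The plan is to realize $X$ as the pullback of the universal sphere bundle and then apply the computation of the universal relative homological type together with the Whitney product formula. Concretely, the quadratic form $B = ax^2 + by^2$ is a rank-2 non-degenerate form over $K$, so it is classified by a map $f \colon \spec K \to Q_2 \simeq BO_2$, and by the proposition preceding \Cref{WhitneyProductFormula} we have a pullback square
\[
\xymatrix{
    X \ar[r]\ar[d] & Q_2^S \ar^{p_2}[d] \\
    \spec K \ar^f[r] & Q_2
}
\]
with $p_2$ smooth (the sphere of a non-degenerate quadratic form is smooth when $\mathrm{char}(K) \ne 2$).

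First, I would apply \Tref{push of universal sphere} to conclude that $\Z2 \otimes \Et(p_2)$ is an extension of $\Z2$ by $\Z2[1]$ classified by $HW_2 \in \HH^2_\et(Q_2,\Z2)$. By the smooth base-change result \Cref{corollary: relative homotopy smooth basechange} applied to the square above, $\Z2 \otimes \Et(X\to \spec K) \cong f^*(\Z2 \otimes \Et(p_2))$, and since $f^*$ preserves fiber sequences and classifying cohomology classes pull back, this pullback is the extension of $\Z2$ by $\Z2[1]$ over $\spec K$ classified by $f^*(HW_2)$.

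Second, I would compute $f^*(HW_2)$ by exploiting that $B = ax^2 \oplus by^2$ decomposes as an orthogonal sum, yielding a factorization
\[
f \colon \spec K \xrightarrow{(f_a,f_b)} BO_1 \times BO_1 \xrightarrow{\Delta} BO_2 \simeq Q_2,
\]
where $f_a$ and $f_b$ classify the rank-1 forms $ax^2$ and $by^2$. By \Cref{WhitneyProductFormula}, $\Delta^*(HW_2) = HW_1^{(0)} \cup HW_1^{(1)}$, and by the 0-dimensional calculation carried out just above, $f_a^*(HW_1) = [a]$ and $f_b^*(HW_1) = [b]$. Therefore $f^*(HW_2) = [a]\cup[b]$ in $\HH^2_\et(K,\Z2)$.

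Finally, I would identify this extension class with the second $\Z2$-homological obstruction via \Pref{EquivalenceOfHomologicalObstructions}, giving the desired equality $o_2(X/K) = [a]\cup[b]$. The main obstacle in executing this plan is that \Pref{EquivalenceOfHomologicalObstructions} is proved only for $n \ge 2$, whereas here we need the case $n = 1$ (obstruction in $H^2$, extension by a shift of degree $1$). The remark at the beginning of \Sref{RelativeObstruction} already flags that the gerbe-theoretic obstructions behave well in low dimensions for \emph{abelian} coefficients; concretely, for homological obstructions the sheaf $\m{G}_1$ arising from the fiber of $P_{\le 1}(t_\Lambda) \to P_{\le 0}(t_\Lambda)$ is automatically an abelian object (it is a torsor under $\pi_1 t_\Lambda$, which is a sheaf of $\Lambda$-modules), and the adjunction-based argument of \Lref{cocycles adjunction} goes through without change, so the identification of $o_2$ with the extension class in $\Ext^2_{\Sh{K_\et,\Z2}}(\Z2,\Z2)$ still holds.
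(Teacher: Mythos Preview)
Your proposal is correct, but it follows a genuinely different route from the paper. The paper proves this case by an explicit, self-contained computation: it passes to the Harpaz--Schlank model of the relative \'etale homotopy type via hypercovers, constructs a concrete degree-$8$ \'etale cover $U\to X$ (the Weil restriction from $K(\sqrt{a},\sqrt{-b})$ of the squaring map on $\mathbb{G}_m$), takes its \v{C}ech nerve, computes the fundamental groupoid of $|U_\bullet|$ with its Galois action by hand, and then evaluates the obstruction cocycle on the generators of $\Gamma_{L/K}\cong(\Z2)^2$ to see directly that it is $[a]\cup[b]$. This argument is independent of the machinery of Sections~\ref{EtTopType}--\ref{FormsAndMaps}.

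Your argument instead runs the proof of \Tref{obstruction of sphere bundles} and the Whitney-formula computation of Section~\ref{ObstructionsForSpheres} at $n=2$, noting that the only obstacle is the $n\ge 2$ hypothesis in \Pref{EquivalenceOfHomologicalObstructions} and that this hypothesis is inessential for abelian coefficients. The paper itself anticipates exactly this: at the opening of Section~\ref{ObstructionsForSpheres} it remarks that ``the theorems proved earlier in this paper could be extended to allow for a proof of low dimensions, as the theory of gerbes behaves properly in low dimensions for \emph{abelian} sheaves and we are dealing with the homological obstruction.'' Your approach buys uniformity with the higher-dimensional case and avoids any explicit cocycle computation; the paper's approach buys a concrete geometric picture and complete independence from the stack-theoretic machinery, at the cost of some simplifying assumptions ($[K(\sqrt{a},\sqrt{-b}):K]=4$ and $\sqrt{-1}\in K$) that it leaves to the reader to remove.
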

\begin{proof}
	In \cite{HarpazSchlank}, the relative \etale homotopy type of $X$ over $K$ is defined by the inverse system 
	over all hypercovers $U_\bullet \to X$ of the component simplicial sets 
	$\pi_0 \overline{U}_\bullet$, obtained by base-change to the separable closure and taking the $\Gamma_K$ set of connected components levelwise. 
	In \cite{BARNEA2016784}, the relationship between this definition of the relative \etale homotopy type and the one given in this paper is discussed. 
	Homotopical questions will get the same answer in both cases, and in particular the obstruction class over a field can be computed by choosing appropriate hypercovers. 
	
	Let $U_\bullet\to X$ be a hypercover and denote by $\overline{U_\bullet}$ its base-change to the separable closure.
	Considering the Galois action on the set of connected components, we have a $\Gamma_K$-equivariant map 
	$\Et_{/K}(X) \to \pi_0(\overline{U_\bullet}) =\colon |{U_\bullet}|$, and in particular it induces a 
	map $\pi_n \Et_{/K}(X) \to \pi_n |{U_\bullet}|$. 
	Moreover, the induced map 
	\[
	\HH^{n+1}(\Gamma_K,\pi_n \Et_{/K}(X)) \to \HH^{n+1}(\Gamma_K,\pi_n |{U_\bullet}|)
	\]
	maps the obstructions 
	for rational point of $X$, to the obstructions for homotopy fixed point in 
	$|{U_\bullet}|$. For the special case $n=1$, the map $\pi_1 \Et_{/K}(X) \to \pi_1 |{U_\bullet}|$ 
	should be interpreted as morphism of fundamental groupoids. 
	
	We will pick a specific hypercover $U_\bullet$ and compute the fundamental groupoid of $\overline{U_\bullet}$ together with the $\Gamma_K$-action, 
	and show that it is a $\ZZ/2\ZZ$-gerbe, in the sense that it is a groupoid 
	with $\Gamma_K$-action, which is non-equivariantly equivalent to $B\ZZ/2\ZZ$. 
	Hence, by the simple observation that the morphism $\HH_1(\Et_{/K}(X),\ZZ/2\ZZ) \to \HH_1(|U_\bullet|,\ZZ/2\ZZ)$ is 
	an isomorphism, 
	we see that we have an isomorphism
	\[
	\HH^{2}(\Gamma_K,\pi_1(\Z2\otimes\Et_{/K}(X)))\cong \HH^2(\Gamma_k,\pi_1|U_\bullet|)\cong \HH^2(\Gamma_K,\Z2).
	\]
	It remains to show that this gerbe is classified by the cocycle $[a]\cup[b]$.
	Consider the field extension $L=K[\sqrt{a},\sqrt{-b}]$ of $K$.
	We'll assume throughout the remainder of this proof that this extension is of degree 4, as well as assuming $i=\sqrt{-1}\in K$,
	both assumptions are not needed for the conclusion to hold but simplify the proof while the proofs for the general case only require small adjustments from the proof below and are left to the interested reader.
	This extension is Galois, with Galois group $\Gamma_{L/K}=\Z2\oplus\Z2$. 
	We shall build the hypercover with respect to this field, and the obstruction class will 
	be seen to reside in $\HH^2(\Gamma_{L/K},\Z2)\subset \HH^2(\Gamma_K,\Z2)$.
	
	Observe that $X$ is a twist of $\mathbb{G}_m$, trivialized over $L$.
	We construct a degree 8 cover $U\to X$ by taking the Weil restriction from $L$ to $K$ of the double cover 
	\[
	\xymatrix{
		\mathbb{G}_m \ar[r]^{z\mapsto z^2} &\mathbb{G}_m.
	}
	\]
	Concretely, we can write
	\[
	\xymatrix{
		(z,s,t) \ar@{|->}[d]  &   U=\spec K[z,s,t]/(s^2-a,t^2+b) \ar[d]\\
		(\frac{z^2+z^{-2}}{2s},\frac{z^2-z^{-2}}{2t})   &   X=\spec K[x,y]/(ax^2 + by^2 -1).
	}
	\]
	
	This cover can be extended to a hypercover $U_\bullet\to X$ by taking the \'{C}ech nerve,
	so that $U_\bullet$ is a simplicial variety given by 
	\[
	U_n = \overbrace{U\times_X\cdots\times_X U}^{n\text{ times}} =
	\spec\frac{K[z_1,z_1^{-1},\ldots,z_n,z_n^{-1},s_1,t_1\ldots,s_n,t_n]}{(s_i^2-a,t_i^2+b,z_i^2-(\frac{s_i}{s_j} + \frac{t_i}{t_j})\frac{z_j^2}{2} - (\frac{s_i}{s_j} - \frac{t_i}{t_j})\frac{z_j^{-2}}{2})}.
	\]
	
	The 0 dimensional skeleton of $|U_\bullet|$ consists of 4 points,
	each point is indexed by one solution of $s,t$ to $s^2=a$ and $t^2 = -b$.
	The Galois group $\Gamma_{L/K}$ acts on the set of points naturally.
	
	The edges of $|U_\bullet|$ (the connected components of $\overline{U\times_X U}=\overline{U}\times_{\overline{X}}\overline{U}$) are grouped into pairs,
	so that any two vertices have two edges among them for each direction.
	For two points indexed by $s_1,t_1$ and $s_2,t_2$, the two morphisms from the former to the latter are given by the two solution to 
	\[
	z_1^2=(\frac{s_1}{s_2} + \frac{t_1}{t_2})\frac{z_2^2}{2} + (\frac{s_1}{s_2} - \frac{t_1}{t_2})\frac{z_2^{-2}}{2},
	\]
	so that
	\[
	z_1 \in \left\{ \pm z_2^{\pm 1}, \pm i z_2^{\pm 1}  \right\}.
	\]
	For example, if $s_2 = -s_1$ and $t_2 = t_1$ then we have the two morphisms indexed by $z_1 = \pm i z_2^{-1}$, which we think of as the morphisms $z\mapsto \pm i z^{-1}$.
	The Galois group acts on the edges by taking morphisms to morphisms with the same "name" (the same choice of solution $z_1=\pm \sqrt{\pm 1}z_2^{\pm 1}$).
	Note that the Galois group does indeed preserve $\frac{s_1}{s_2}=\frac{\sigma s_1}{\sigma s_2}$. 
	
	The 2 dimensional faces are thought of as homotopies between the morphisms, 
	so that there is a homotopy iff the diagram is commutative,
	as can be computed directly. 
	From all of the above,
	it is easy to calculate that $\pi_1(|U_\bullet|)=\Z2$.
	
	Now we can carry on to calculate the obstruction class. 
	In general, if $\Gamma$ is a group acting on a topological space $X$, we can use the 
	skeleton filtration of $E\Gamma$, instead of the Postnikov filtration of 
	$X$, in order to compute the obstructions. 
	In the case of the first obstruction class of a connected space $X$, the formula takes the following form. 
	Pick any $x \in X$, and for each $\sigma \in \Gamma$, choose a path $\gamma_\sigma$ connecting 
	$x$ to $\sigma(x)$. Then, for each simplex $[1,\sigma,\tau] \in E\Gamma_2$, we get 
	a map $\partial \Delta^2 \to X$ 
	given by the following triangle in the space $X$: 
	
	\[
	\xymatrix{
		&  x \ar^{\gamma_\sigma}[ld] \ar^{\gamma_\tau}[rd]        &  \\
		\sigma(x) \ar^{\sigma(\gamma_{\sigma^{-1}\tau})}[rr]&          & \tau(x)
	}
	\]
	
	This triangle represents a class in $\pi_1(X,x)$, so we get a map 
	$E\Gamma_2 \to X$ which is easily seen to be a 2-cocycle. This cocycle is exactly the obstruction 
	to a homotopy fixed point of $\Gamma$ in $X$, or in other words, the obstruction for section 
	to the map $X // \Gamma \to B\Gamma$.  
	
	This cocycle is indeed $[a]\cup [b]$, as we can explicitly calculate.
	We exemplify this calculation on one value of the cocycle.
	Let $\sigma_a,\sigma_b\in\Gamma_{L/K}$ be the automorphisms satisfying
	\begin{align*}
	\sigma_a(\sqrt{a})=-\sqrt{a},\ \sigma_a(\sqrt{-b})=\sqrt{-b}, \\
	\sigma_b(\sqrt{-b})=-\sqrt{-b},\ \sigma_b(\sqrt{a})=\sqrt{a}. \\
	\end{align*}
	If we pick the base point defined by $s=\sqrt{a},t=\sqrt{-b}$ 
	and edges
	\begin{align*}
	&\gamma_1=z,\ &\gamma_{\sigma_a}=iz^{-1}\\
	&\gamma_{\sigma_b}=z^{-1},\ &\gamma_{\sigma_a \sigma_b}=iz,
	\end{align*}
	then we can calculate the cocycle on $\sigma_a,\sigma_b$ as 
	\begin{align*}
	\gamma_{\sigma_a}^{-1}\circ 
	\sigma_a(\gamma_{\sigma_b}^{-1})\circ
	\gamma_{\sigma_a\sigma_b} 
	= (iz^{-1})\circ(z^{-1})\circ(iz) =-z
	\end{align*}
	which is not the chosen map $\gamma_1$, and hence results in a nontrivial element.
	Carrying on this calculations to other pairs, this identifies the cocycle with the cup product.
\end{proof}

\subsection{Obstruction Theory for Sphere Bundles of Higher Dimensions}

In this subsection we finally relate the Stiefel-Whitney classes to the 
relative obstruction theory of sphere bundles. 
By Proposition \ref{EquivalenceOfHomologicalObstructions} and the fact that $\Z2 \otimes \Et{(\pi_n)}$ is an extension of $\Z2$ by 
$\Z2[n-1]$ classified by $HW_n$ (Theorem \ref{push of universal sphere}), we deduce that 
the $n$-th homological obstruction for section of $\pi_n \colon  Q_n^S \to Q_n$ is exactly 
$HW_n$. 
Using this and a base-change result, we will show that for every morphism 
$f\colon  X \to Q_n$ classified by a quadratic bundle 
$(\mathcal{E},B)$ over $X$, the extension 
$\Z2[n-1] \to \Z2 \otimes \Et{(\pi_B)} \to \Z2$ is classified by 
the pullback $f^*(HW_n)$.  

\begin{proposition}
	Consider the pullback diagram 
	\[\xymatrix{
		S_B(\Epsilon) \ar^{f'}[r] \ar^{\pi_B}[d] &  Q_n^S \ar^{\pi_n}[d] \\ 
		X \ar^{f}[r]             &  Q_n
	}
	\] 
	
	The canonical comparison map
	\[\Z2 \otimes \Et{(\pi_B)} \to \Z2 \otimes f^* \Et{(\pi_n)}  \]
	is an isomorphism. 
\end{proposition}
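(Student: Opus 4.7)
The plan is to identify the stated equivalence as an instance of the smooth base-change theorem already proved for $\infty$-stacks (Corollary \ref{corollary: relative homotopy smooth basechange}), applied to the given pullback square with coefficient ring $\Lambda = \Z2$. Since $\mathrm{char}(K)\ne 2$, the ring $\Z2$ is finite of order prime to $\mathrm{char}(K)$, so the hypothesis on $\Lambda$ is satisfied. What must be checked is that the morphism $\pi_n\colon Q_n^S\to Q_n$ is smooth in the sense of Definition \ref{def: smooth map of stacks}, and that $f\colon X\to Q_n$ is schematic.

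For the schematicity of $f$: $X$ is a scheme, and the morphism is classified by a quadratic bundle $(\mathcal{E},B)$ on $X$. Given any scheme $T$ with a morphism $T\to Q_n$, the fiber product $X\times_{Q_n}T$ classifies isomorphisms between the two pulled-back quadratic bundles, which is representable by a locally closed subscheme of the affine scheme of linear maps. For the smoothness of $\pi_n$: by definition it suffices to check, for every morphism $Y\to Q_n$ from a scheme classifying a quadratic bundle $(\mathcal{F},B')$, that the pullback $Y\times_{Q_n}Q_n^S\to Y$ is a smooth morphism of schemes. But by the pullback description already used in the paper, this is just the sphere bundle $S_{B'}(\mathcal{F})\to Y$, which is smooth in characteristic $\ne 2$ since it is \'etale-locally the standard sphere $\sum x_i^2=1$ (Lemma \ref{lemma: quadratic bundles are locally trivial}), and the standard sphere is smooth.

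With the hypotheses verified, Corollary \ref{corollary: relative homotopy smooth basechange} applied with $\Lambda=\Z2$ gives an equivalence
\[
f^*|\pi_n|_{\Z2}\ \cong\ |\pi_B|_{\Z2}
\]
in $\pro\Sh{X,\Z2}$, induced by the Beck-Chevalley map of the pullback square. By Corollary \ref{corollary: shrieck commutes with tensor} we may rewrite both sides via $|-|_{\Z2}\cong \Z2\otimes\Et(-)$, and since $f^*$ is symmetric monoidal it commutes with the functor $\Z2\otimes(-)$. Thus the equivalence takes the form
\[
\Z2\otimes\Et(\pi_B)\ \cong\ \Z2\otimes f^*\Et(\pi_n),
\]
which is precisely the canonical comparison map in question.

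The only step that requires any attention is confirming that the map arising from tensoring the homotopy-type Beck-Chevalley transformation with $\Z2$ agrees with the Beck-Chevalley transformation at the level of homological types; this is straightforward from the naturality of $I_{\Z2}$ used in the proof of Corollary \ref{corollary: shrieck commutes with tensor}. I do not expect any genuine obstacle: once the smoothness of $\pi_n$ (which is a geometric fact about sphere bundles in characteristic $\ne 2$) and the schematicity of $f$ are in hand, the result is a direct corollary of the general smooth base-change theorem established earlier in the paper.
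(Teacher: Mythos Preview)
Your proof is correct and follows essentially the same route as the paper: both reduce the claim to the smooth base-change theorem for $\infty$-stacks (Theorem \ref{theorem: Smooth Basechange for stacks} / Corollary \ref{corollary: relative homotopy smooth basechange}) after verifying that $\pi_n$ is smooth. The only minor difference is that the paper checks smoothness of $\pi_n$ via its level-wise smooth simplicial presentation $O_{n-1}^\bullet\to O_n^\bullet$, whereas you verify it directly from Definition \ref{def: smooth map of stacks} by observing that every pullback to a scheme is a sphere bundle; you are also slightly more explicit than the paper about the schematicity of $f$ and about matching the Beck--Chevalley map with the stated comparison map via Corollary \ref{corollary: shrieck commutes with tensor}.
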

\begin{proof}
	The map $Q_n^S\to Q_n$ is a smooth map of stacks, in the sense of Definition \ref{def: smooth map of stacks},
	as it is a schematic map presented by a simplicial map $O_{n-1}^\bullet\to O_n^\bullet$ which is level-wise smooth. Hence, by theorem \ref{theorem: Smooth Basechange for stacks}, the diagram 
	\[
	\xymatrix{
		\ProSh{S_B(\Epsilon),\Z2}       &   \ProSh{Q_n^S,\Z2} \ar^(0.45){(f')^*}[l] \\
		\ProSh{X,\Z2}   \ar^{\pi_B}[u]  &   \ProSh{Q_n,\Z2} \ar^{f^*}[l]    \ar^{\pi_n^*}[u]
	}
	\]
	satisfies the left BC-condition, which precisely means that the map above is in fact an isomorphism.
\end{proof}

\begin{theorem}
	\label{obstruction of sphere bundles}
	Let $(\Epsilon,B)$ be a quadratic bundle of rank $n$ over a scheme $X$ over $K$ 
	classified by a map $f_{\Epsilon,B}\colon  X \to Q_n$. 
	Let $\pi_B \colon  S_B(\Epsilon) \to X$ denote the projection of the sphere bundle to the base $X$.
	Then, the mod $2$ obstruction for section of $\pi_B$ is the $n$-th Stiefel Whitney class of 
	$(\Epsilon,B)$, 
	namely 
	\[o_n(\pi_B,\Z2) = f_{\Epsilon,B}^* HW_n\] 
\end{theorem}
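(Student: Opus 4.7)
The idea is to reduce the computation for the given quadratic bundle to the universal case over $Q_n$ via the base-change isomorphism established in the proposition immediately preceding this theorem. The two key inputs are \Tref{push of universal sphere}, which computes the universal extension, and \Pref{EquivalenceOfHomologicalObstructions}, which identifies extension classes with the top homological obstruction.

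First, I would invoke \Tref{push of universal sphere}: the object $\Z2\otimes\Et(\pi_n) \in \ProSh{(Q_n)_\et,\Z2}$ fits into a canonical fiber sequence
\[
\Z2[n-1] \longrightarrow \Z2\otimes\Et(\pi_n) \longrightarrow \Z2,
\]
whose class in $\Ext^{1}(\Z2,\Z2[n-1]) \cong \HH^{n}(Q_n,\Z2)$ is precisely $HW_n$. By \Pref{EquivalenceOfHomologicalObstructions} this extension class is identified with the top mod-$2$ homological obstruction to a section of $\Et(\pi_n)$, so the universal case is settled.

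Next, I would pull back along $f = f_{\Epsilon,B}$. The base-change proposition above gives $\Z2\otimes\Et(\pi_B) \simeq f^{\ast}(\Z2\otimes\Et(\pi_n))$. Since $f^{\ast}$ is the left-exact half of a geometric morphism and its prolongation to the pro-category inherits left-exactness, it carries the universal fiber sequence to a fiber sequence
\[
\Z2[n-1] \longrightarrow \Z2\otimes\Et(\pi_B) \longrightarrow \Z2
\]
in $\ProSh{X_\et,\Z2}$. By naturality of the correspondence between fiber sequences with fixed ends and their classifying $\Ext^{1}$-classes, together with the canonical identification $f^{\ast}\Z2 \cong \Z2$, the classifying element of the pulled-back sequence is $f^{\ast}HW_n \in \HH^{n}(X,\Z2)$. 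Applying \Pref{EquivalenceOfHomologicalObstructions} once more, this extension class coincides with the top mod-$2$ homological obstruction to a section of $\pi_B$, giving $o_n(\pi_B,\Z2) = f_{\Epsilon,B}^{\ast}HW_n$.

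The only nontrivial bookkeeping is the naturality statement that $f^{\ast}$ sends an extension class $\beta \in \Ext^{1}(\Z2,\Z2[n-1])$ to $f^{\ast}\beta$; this is formal once one notes that the fiber-sequence/Ext correspondence is defined via the boundary in the mapping spectrum, which is natural in both arguments. Granting this, the proof is a two-line concatenation of \Tref{push of universal sphere}, the base-change isomorphism, and \Pref{EquivalenceOfHomologicalObstructions}, with no further hard computation required.
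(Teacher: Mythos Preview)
Your proposal is correct and follows essentially the same route as the paper: combine \Tref{push of universal sphere} with \Pref{EquivalenceOfHomologicalObstructions} to identify the universal obstruction over $Q_n$ as $HW_n$, then use the smooth base-change isomorphism $\Z2\otimes\Et(\pi_B)\simeq f_{\Epsilon,B}^*(\Z2\otimes\Et(\pi_n))$ to pull this back to $X$. The only cosmetic difference is in the last step: the paper appeals directly to the functoriality of the obstruction under geometric pullback established in Section~\ref{RelativeObstruction} (the proposition $f^*o_{n+1}(t,\sigma_{n-1}) = o_{n+1}(f^*t,f^*\sigma_{n-1})$), whereas you pull back the fiber sequence and reapply \Pref{EquivalenceOfHomologicalObstructions} on $X$; these amount to the same thing.
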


\begin{proof}
	Recall that the obstruction is compatible with morphisms of sheaves and with 
	pullbacks by morphisms of schemes. 
	Since 
	$\Z2 \otimes \Et{\pi_B} \cong f_{\Epsilon,B}^* \Z2 \otimes \Et{\pi_n}$, 
	we get 
	\[
	o_n(\Z2 \otimes \Et{\pi_B}) = f_{\Epsilon,B}^* o_n(\Z2 \otimes \Et{\pi_n}) = f_{\Epsilon,B}^*(HW_n).
	\]
\end{proof}

\subsection{Application to Unit Spheres}

Let $B$ be a quadratic form of an ${n+1}$ dimensional vector space over a field $K$ of characteristic $\ne 2$. 
We can diagonalize $B$ to a form of the form 
$a_0x_0^2 + \ldots + a_n x_n^2$. 
The theory developed in Section \ref{FormsAndMaps} implies a map
$\spec{K} \stackrel{f_B}{\to} Q_{n+1}$, and by Theorem
\ref{obstruction of sphere bundles}, 
the obstruction for a rational point of $S_B$ is given by 
$f_B^* HW_{n+1}$.

Moreover, we can write $V$ as a product of 1 dimensional quadratic spaces
\[
V = V_0\times \cdots \times V_n
\]
where each space $V_i$ is equipped with the billinear form $B_i(x)=a_i x^2$.
Each $V_i$ is classified by a map $\spec{K}\stackrel{f_{B_i}}{\to}Q_1$, 
and thus we can factor $f_B$ as
\[
\spec{K} \xrightarrow{f_{B_0}\times\cdots\times f_{B_n}} Q_1^{n+1} \stackrel{\Delta}{\to} Q_{n+1}.
\]

Recall that, for a field $K$, $\HH^1(K,\Z2) \cong K^\times / (K^\times)^2$. 
We denote by $[a]$ the cohomology class corresponding to 
$a \in K^\times$. 

\begin{lemma}
	\label{pullbackOfHW1}
	Let $X$ be the 0 dimensional sphere given by $ax^2=1$,
	and let $f\colon \spec K \to Q_1$ be the map classifying $X$.
	We have
	\[
	f^*(HW_1)=[a].
	\]
\end{lemma}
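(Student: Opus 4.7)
The plan is to unravel the classifying map and compute the pullback of $HW_1$ explicitly by Kummer theory.

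First, I will identify the relevant stacks concretely. Since $\mathrm{char}(K)\ne 2$, the algebraic group $O_1$ is canonically the constant group scheme $\mu_2$, so $Q_1 \cong BO_1 \cong B\mu_2$. Under the equivalence between rank 1 quadratic bundles and $O_1$-torsors of orthonormal frames (Corollary \ref{lemma: BO_n classifies quadratic bundles}), the map $f\colon\spec K\to Q_1$ classifying the quadratic line $(K, ax^2)$ will correspond to the $\mu_2$-torsor of unit vectors
\[ T_a \;=\; \spec K[v]/(av^2 - 1). \]

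Second, I will identify $HW_1$ with the tautological class. By Theorem \ref{CohomologyOfBOn}, $HW_1$ is a polynomial generator of $\HH^*_\et(B\mu_2,\Z2)$ over $A$ in degree $1$; following the convention of \cite{jardine1989universal}, it is the class of the universal $\mu_2$-torsor under the identification $\HH^1_\et(B\mu_2,\Z2) \cong \HH^1_\et(B\mu_2,\mu_2)$. Under this identification, pulling back $HW_1$ along a classifying map is the same as forming the classified torsor, so $f^*HW_1 \in \HH^1_\et(K,\mu_2)$ is exactly the class of $T_a$.

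Third, a routine Kummer computation will finish the proof. The standard isomorphism $\HH^1_\et(K,\mu_2) \cong K^\times/(K^\times)^2$ sends $b \in K^\times$ to the class of the torsor $\spec K[w]/(w^2 - b)$. Multiplying the defining relation of $T_a$ by the unit $1/a$ gives $T_a \cong \spec K[v]/(v^2 - 1/a)$, whose Kummer class is $[1/a]$. Since $1/a$ and $a$ differ by the square $(1/a)^2$, we conclude $[1/a] = [a]$ in $K^\times/(K^\times)^2$, as desired.

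The main obstacle is merely fixing conventions: matching Jardine's definition of $HW_1$ with the tautological $\mu_2$-torsor class and tracking the sign/inversion in the Kummer isomorphism. Once these conventions are pinned down, the remaining argument is a direct unwinding of definitions; no extra machinery beyond the identifications $O_1 = \mu_2$, the frame-bundle interpretation of $BO_1$, and Kummer theory is needed.
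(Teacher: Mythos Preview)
Your proposal is correct and follows essentially the same approach as the paper: both identify $Q_1\cong B\mu_2$, recognize $HW_1$ as the class of the universal $\mu_2$-torsor $E\mu_2\to B\mu_2$, and pull it back along $f$ to obtain the torsor $X=\spec K[v]/(av^2-1)\to \spec K$, whose Kummer class is $[a]$. Your write-up is slightly more explicit about the Kummer identification (including the harmless $[1/a]=[a]$ step), but the argument is the same.
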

\begin{proof}
	Note that $Q_1\cong B\Z2$ and that $Q_1^S\cong E\Z2 \cong \spec{K}$.
	
	It is well known that 1-cocycles classify Galois covers. 
	In our case, the class identified with $HW_1$ in $\HH^1(B\Z2,\Z2)$ classifies the cover 
	$E\Z2\to B\Z2$.
	Therefore, $f^*(HW_1)$ classifies the cover obtained via pullback, $X\to \spec K$.
	This cocycle can be directly computed to be $[a]$.
\end{proof}

\begin{proposition}
	If $B$ is a quadratic form with diagonal entries 
	$a_0,\ldots,a_n$, then $f_B^* HW_n = [a_0] \cup \ldots \cup [a_n]$. 
\end{proposition}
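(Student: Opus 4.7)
The plan is to exploit the factorization of $f_B$ through the diagonal $\Delta\colon Q_1^{n+1}\to Q_{n+1}$ already exhibited in the discussion preceding the statement, together with the Whitney product formula (Corollary \ref{WhitneyProductFormula}) and the zero-dimensional computation (Lemma \ref{pullbackOfHW1}). Concretely, since $f_B=\Delta\circ(f_{B_0}\times\cdots\times f_{B_n})$, the pullback $f_B^*HW_{n+1}$ factors as $(f_{B_0}\times\cdots\times f_{B_n})^*\Delta^*HW_{n+1}$, so it suffices to compute each of these two pullbacks and assemble them.

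First, apply Corollary \ref{WhitneyProductFormula} to rewrite
\[
\Delta^*HW_{n+1}=HW_1^{(0)}\cup\cdots\cup HW_1^{(n)}\in \HH^{n+1}_{\et}(Q_1^{n+1},\Z2),
\]
where $HW_1^{(i)}$ is the pullback of $HW_1$ along the $i$-th projection $\mathrm{pr}_i\colon Q_1^{n+1}\to Q_1$. Next, note that $\mathrm{pr}_i\circ(f_{B_0}\times\cdots\times f_{B_n})=f_{B_i}$, so by functoriality of pullback on cohomology
\[
(f_{B_0}\times\cdots\times f_{B_n})^*HW_1^{(i)}=f_{B_i}^*HW_1.
\]
By Lemma \ref{pullbackOfHW1}, each of these equals $[a_i]\in\HH^1_{\et}(K,\Z2)$.

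Finally, since pullback on \'etale cohomology is a ring homomorphism, it commutes with cup products, giving
\[
f_B^*HW_{n+1}=(f_{B_0}\times\cdots\times f_{B_n})^*\bigl(HW_1^{(0)}\cup\cdots\cup HW_1^{(n)}\bigr)=[a_0]\cup\cdots\cup[a_n].
\]
There is no real obstacle here: once the factorization through $\Delta$ is in place, every step is a routine application of naturality and the two ingredients already proved (Whitney's formula for $\Delta^*$ and the identification $f_{B_i}^*HW_1=[a_i]$). The only subtlety worth verifying is the indexing convention (the rank $n+1$ form corresponds to the top class $HW_{n+1}$ on $Q_{n+1}$, matching the $n+1$ factors $[a_0],\ldots,[a_n]$ in the cup product), which is immediate from the diagonal factorization.
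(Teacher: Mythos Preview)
Your proof is correct and follows essentially the same approach as the paper: factor $f_B$ through $\Delta$, apply the Whitney product formula (Corollary \ref{WhitneyProductFormula}), use that pullback is a ring homomorphism, and invoke Lemma \ref{pullbackOfHW1} for each factor. You also correctly flag the indexing convention (the top class on $Q_{n+1}$ is $HW_{n+1}$, matching the $n+1$ cup factors), which the paper's statement writes somewhat loosely as $HW_n$.
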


\begin{proof}
	The factorization $f_B=\Delta\circ (f_{B_0}\times\cdots\times f_{B_n})$ gives
	\begin{align}
	f_B^* HW_n  &=  (f_{B_0}^*\times\cdots\times f_{B_n}^*)\circ \Delta^*HW_n = \label{fromWhitney}\\
	&=  (f_{B_0}^*\times\cdots\times f_{B_n}^*) (HW_1^{(0)}\cup\cdots\cup HW_1^{(n)})\label{fromCupProp}=\\
	&=  f_{B_0}^*HW_1^{(0)}\cup\cdots\cup f_{B_n}^*HW_1^{(n)} \label{from0dimComp}=\\
	&=  [a_0]\cup\cdots\cup[a_n]. \notag
	\end{align}
	The equality \eqref{fromWhitney} is true due to the Whitney formula \eqref{WhitneyProductFormula}.
	Equality \eqref{fromCupProp} is true by general properties of the cup product.
	The last equality, \eqref{from0dimComp}, follows from Lemma \ref{pullbackOfHW1}.
\end{proof}

\begin{corollary}
	Let $B$ be the quadratic form defined over $K$ given by
	\[
	B(x_0,\ldots,x_n) = \sum_i a_i x_i^2.
	\]
	The $n$-th mod 2 obstruction for a rational point on the sphere $S_B$ is 
	\[HW_n(B) = [a_0] \cup\ldots\cup[a_n]\]
\end{corollary}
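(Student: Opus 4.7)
The plan is to derive this corollary as a direct combination of \Tref{obstruction of sphere bundles} with the preceding proposition, so the work is almost entirely packaging of already-established results.

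First, I would observe that the sphere $S_B$ is, by definition, the pullback
\[
\xymatrix{
S_B \ar[r]\ar[d]_{\pi_B} & Q_{n+1}^S \ar[d]^{\pi_{n+1}} \\
\spec K \ar[r]^{f_B} & Q_{n+1}
}
\]
where $f_B$ is the classifying map of the rank-$(n+1)$ quadratic space $(K^{n+1},B)$ whose existence is guaranteed by \Cref{lemma: BO_n classifies quadratic bundles}. Thus, computing the obstruction for $\pi_B$ amounts to pulling back the universal computation along $f_B$.

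Next, I would apply \Tref{obstruction of sphere bundles}, taking $X=\spec K$ and $(\Epsilon, B)$ the diagonal rank-$n+1$ form with entries $a_0,\ldots,a_n$, to conclude that the relevant mod-$2$ homological obstruction to a section of $\pi_B$ is precisely $f_B^*(HW_{n+1})$. Finally, the preceding proposition computes exactly this pullback as
\[
f_B^*(HW_{n+1}) = [a_0]\cup\cdots\cup [a_n].
\]
Chaining these two equalities yields the stated formula.

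Since the genuine technical content has already been carried out, there is no significant obstacle remaining at this stage; the one thing to be mindful of is the bookkeeping between homotopical and homological obstructions and the corresponding degree indexing (i.e. that the mod-$2$ homological obstruction for the sphere bundle of a rank-$(n+1)$ form lives in degree $n+1$ and is detected by $HW_{n+1}$, matching the $n+1$ generators $[a_0],\ldots,[a_n]$ coming from the diagonalization). Once the indexing is consistent, the corollary follows by direct substitution.
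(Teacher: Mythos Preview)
Your proposal is correct and mirrors the paper's own proof, which simply says the result follows from the preceding proposition by applying \Tref{obstruction of sphere bundles}. Your explicit display of the pullback square and your remark about the $n$ versus $n+1$ indexing are useful clarifications, but the logical content is identical.
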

\begin{proof}
	This follows from the previous proposition by applying Theorem \ref{obstruction of sphere bundles}.
\end{proof}

\bibliographystyle{unsrt}
\bibliography{ref}

\end{document}